\theoremstyle{plane}
\newtheorem{thm}{Theorem}[section]
\newtheorem{prop}[thm]{Proposition}
\newtheorem{lem}[thm]{Lemma}
\newtheorem{cor}[thm]{Corollary}
\theoremstyle{definition}
\newtheorem{dfn}[thm]{Definition}
\newtheorem{ex}[thm]{Example}
\theoremstyle{remark}
\newtheorem{rem}[thm]{Remark}
 \newtheorem*{ack}{Acknowledgements}
\newcommand{\rank}{\operatorname{rank}}
\newcommand{\im}{\operatorname{Im}}
\newcommand{\R}{\mathbb{R}}
\newcommand{\bS}{\mathbb{S}}
\newcommand{\bV}{\bm{V}}
\newcommand{\e}{\bm{e}}
\newcommand{\bx}{\bm{x}}
\newcommand{\F}{\mathcal{F}}
\newcommand{\nr}{NR}
\renewcommand{\phi}{\varphi}
\newcommand{\what}{\widehat}
\newcommand{\wtil}{\widetilde}
\newcommand{\cal}[1]{\mathcal{#1}}
\newcommand{\inner}[2]{\left\langle{#1},{#2}\right\rangle}
\numberwithin{equation}{section}
\title[Singular loci of normal congruence of frontals]
{On surfaces obtained as singular loci of normal congruence of frontals with pure-frontal singular points}
\author[S. P. dos Santos]{Samuel P. dos Santos}
\address[S.~P. dos Santos]{Departmento de Matem\'{a}tica, 
Universidade Estadual Paulista (UNESP),
R.~Crist\'{o}v\~{a}o Colombo, 2265, Jardim Nazareth, 
15054-000 S\~{a}o Jos\'{e} do Rio Preto, SP, Brazil}
\email{samuelp.santos@hotmail.com}
\author[K. Teramoto]{Keisuke Teramoto}
\address[K. Teramoto]{Department of Mathematics, 
Hiroshima University, Higashi-Hiroshima 739-8526, Japan}
\email{kteramoto@hiroshima-u.ac.jp}
\thanks{The first author was partly supported by the Grant 2018/17712-7, 
 S\~{a}o Paulo Reserch Foundation (FAPESP). 
	The second author was partly supported by JSPS KAKENHI Grant Numbers 
	JP19K14533, JP22K13914 and JP20H01801. The authors were partly supported by 
the Japan-Brazil bilateral project JPJSBP1 20190103.}
\subjclass[2020]{53A05, 53A55, 57R45}
\keywords{Frontal, Normal congruence, Singularity, Focal surface, Ruled surface}
\date{\today}
\begin{document}

\begin{abstract}
	We study singularities and geometric properties of surfaces given by the singular loci of normal congruence of frontals with 
	pure-frontal singular points.
	These surfaces consist of the normal ruled surface and focal surfaces of the initial frontal. 
	For the normal ruled surface, we give characterizations of singularities in terms of geometric invariants of 
	the initial frontal defined on the set of singular points. 
	For focal surfaces, we show relation between certain singularities of them and geometric property of the given frontal. 
	Moreover, we consider behavior of Gaussian curvature of focal surfaces of frontal with a $5/2$-cuspidal edge. 
\end{abstract}

\maketitle

\section{Introduction}
In differential geometry of surfaces in the Euclidean $3$-space $\R^3$, focal surfaces are classical objects. 
Although initial surfaces do not have any singular points, 
their focal surfaces have singular points in general. 
Porteous \cite{porteous0} (see also \cite{porteous}) studied focal surfaces of regular surfaces using singularity theory techniques. 
In the study, he defined a notion of (higher order) {\it ridge points} on a surface 
and showed relation between ridge points and types of singularities on focal surfaces. 
In contrast, Bruce and Wilkinson \cite{bw-fold} investigated the folding map of surfaces 
and they introduced the notion of {\it sub-parabolic points} on surfaces, 
which correspond to parabolic points on focal surface (see also \cite{morris,porteous}). 
Although definitions of these concepts are simple, 
they were not studied deeply before their investigations. 

On the other hand, there are several articles treating 
surfaces with singular points from the differential geometric viewpoint recently 
(cf. \cite{fukui,ft,ft-onepara,hhnuy,hhnsuy,suy_front,ms,msuy,os,hs,saji_santos}). 
In particular, the study of classes called {\it frontals} and {\it fronts} has been attractive. 
Although the first fundamental form (or the induced metric) 
of a surface in the Euclidean $3$-space $\R^3$ is degenerate at singular points,  
frontals or fronts admit smooth unit normal vector even at singular points. 
Since a unit normal vector field of a given frontal can be taken smoothly, 
we can consider {\it normal congruence} of  the frontal naturally. 
It is known that singular loci of normal congruence form {\it focal surfaces} (or {\it caustics}) 
of the initial surface (\cite{arnold,ist-congru}). 
As mentioned above, to investigate focal surfaces of frontals, 
it is expected that we might obtain new geometrical properties for frontals. 

In this paper, we study singularities and geometric properties of surfaces, 
which are arose from the singular loci of normal congruence of frontals 
with {\it pure-frontal singular points} (see Section \ref{sec:frontal}). 
For regular surfaces and fronts with certain singularities, the singular loci corresponds to only focal surfaces (\cite{ist-congru,tera1}). 
However, in the case of frontals, we obtain an additional surface (say the {\it normal ruled surface}, see Section \ref{sec:congruence}). 
This might be a characteristic phenomenon of a frontal but not a front.
Normal ruled surfaces are ruled surfaces whose base curves are the singular loci of given frontals 
and the direction of generators is corresponding unit normal vector. 
We clarify relation between types of singularities of normal ruled surfaces 
and geometrical properties of given frontal surface (Theorems \ref{thm:sing_NR} and \ref{thm:nondev-NR}). 
Moreover, we investigate singularities and geometrical properties of focal surfaces of frontals in Section \ref{sec:focal}. 
We give geometric characterization for focal surfaces to be singular 
at the same singular point of the initial frontal (Proposition \ref{prop:sing_mean}). 
By this result, we find that if the initial frontal has a $5/2$-cuspidal edge, 
then both focal surfaces are regular at that point (Corollary \ref{cor:sing_sing}). 
When focal surfaces have singular points, 
we characterize those points to be of the first kind or of the second kind 
by geometric properties of the initial frontal (Proposition \ref{prop:sk}). 
In particular, if a singular point of focal surfaces is of the first kind, 
we show condition for the point to be a cuspidal cross cap 
in terms of geometric invariants of the initial frontal (Theorem \ref{thm:ccr}). 
Furthermore, we show that the singular set of the initial frontal surface consisting of pure-frontal singular points 
is also the set of pure-frontal singular points of corresponding focal surfaces under certain geometrical properties (Theorem \ref{thm:pure}).  
Finally, we investigate behavior of the Gaussian and the mean curvature of the focal surfaces 
at a $5/2$-cuspidal edge of the initial frontal (Theorem \ref{thm:curvature} and Proposition \ref{prop:Cj-mean}).

\section{Frontal surfaces}\label{sec:frontal}
We recall some notions and properties of frontal surfaces. 
For detailed explanations, see \cite{arnold,ishikawa_frontal,ishikawa_recog,saji_S1,msuy,suy_front}.

Let $f\colon (\R^2,0)\to(\R^3,0)$ be a $C^\infty$ map germ. 
Then $f$ is said to be a {\it frontal} if there exists a $C^\infty$ map 
$\nu\colon(\R^2,0)\to \bS^2$ such that $\inner{df_q(X)}{\nu(q)}=0$ 
holds for any $q\in(\R^2,0)$ and $X\in T_q\R^2$, 
where $\bS^2$ is the standard unit sphere in $\R^3$ and $\inner{\cdot}{\cdot}$ 
is the canonical inner product of $\R^3$. 
Moreover, a frontal $f$ is called a {\it front} 
if the pair $(f,\nu)\colon(\R^2,0)\to(\R^3\times \bS^2,(0,\nu(0)))$ gives an immersion.
We call $\nu$ a {\it unit normal vector field} of $f$. 

We fix a frontal $f\colon(\R^2,0)\to(\R^3,0)$ with a unit normal vector $\nu$. 
We set a function $\lambda\colon(\R^2,0)\to\R$ by 
\begin{equation}\label{eq:lambda}
	\lambda(u,v)=\det(f_u,f_v,\nu)(u,v)\quad 
	(f_u=\partial f/\partial u,\ f_v=\partial f/\partial v),
\end{equation}
where $(u,v)$ is a local coordinate system on $(\R^2,0)$ and 
$\det$ is the determinant. 
We call $\lambda$ the {\it signed area density function}. 
A function ${\Lambda}$ is called an {\it identifier of singularities} of $f$ 
if ${\Lambda}$ is a nowhere-vanishing function multiple of $\lambda$. 
Denoting by $S(f)=\{q\in (\R^2,0)\ |\ \rank df_q<2\}$ the set of singular points of $f$, 
we see that ${\Lambda}^{-1}(0)=S(f)$. 

Assume that $0\in S(f)$ in the following. 
Then $0$ is said to be {\it non-degenerate} if $(\Lambda_u,\Lambda_v)\neq(0,0)$ at $0$. 
When $0$ is a non-degenerate singular point of $f$, 
then there exists a regular curve $\gamma\colon(\R,0)\to(\R^2,0)$ 
such that $\Lambda(\gamma)=0$ holds. 
Moreover, in such a case, since $\rank df_{0}=1$, 
there exists a never vanishing vector field $\eta$ on $(\R^2,0)$ 
such that $df(\eta)=0$ along $\gamma$. 
We call $\gamma$ and $\eta$ a {\it singular curve} and a {\it null vector field} for $f$, respectively. 
The origin $0$ is said to be a singular point of the {\it first kind} (resp. {\it second kind})
if $\gamma'=d\gamma/dt$ is linearly independent (resp. dependent) to $\eta$ at $0$. 
We note that if the singular curve $\gamma$ through $0$ 
consists of the first kind, then the singular locus $\hat{\gamma}=f\circ\gamma$ of $f$ is a regular spacial curve. 

We assume that the origin $0$ is a singular point of the first kind of 
a frontal $f\colon(\R^2,0)\to(\R^3,0)$. 
We define a function $\psi\colon(\R,0)\to\R$ by 
\begin{equation}\label{eq:psi}
	\psi(t)=\det(\hat{\gamma}'(t),\nu(\gamma(t)),d\nu_{\gamma(t)}(\eta)),
\end{equation}
where $\hat{\gamma}'=d\hat{\gamma}/dt$ and $d\nu_{\gamma(t)}(\eta)$ is the directional derivative of $\nu$ 
in the direction of a null vector field $\eta$ along $\gamma$. 

\begin{prop}[\cite{fsuy,msuy}]
	Let $f\colon(\R^2,0)\to(\R^3,0)$ be a frontal and $0$ a singular point of the first kind. 
	Then $f$ is a front at $0$ if and only if $\psi(0)\neq0$, 
	where $\psi$ is a function defined by \eqref{eq:psi}.
\end{prop}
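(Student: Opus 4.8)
The plan is to reduce the front condition to a single injectivity statement at the origin and then recognize that statement as $\psi(0)\neq0$. First I would note that $f$ is a front at $0$ exactly when $(f,\nu)$ is an immersion on some neighborhood of $0$; since being an immersion at a point is an open condition, this is equivalent to the injectivity of $d(f,\nu)_0\colon T_0\R^2\to T_0\R^3\times T_{\nu(0)}\bS^2$. Because $0$ is a non-degenerate singular point of the first kind, $\gamma'(0)$ and $\eta$ form a basis of $T_0\R^2$, with $df_0(\gamma'(0))=\hat\gamma'(0)\neq\0$ (the singular locus is a regular curve) and $df_0(\eta)=\0$. Writing $d(f,\nu)_0(\gamma'(0))=(\hat\gamma'(0),\,d\nu_0(\gamma'(0)))$ and $d(f,\nu)_0(\eta)=(\0,\,d\nu_0(\eta))$, a relation $a\,d(f,\nu)_0(\gamma'(0))+b\,d(f,\nu)_0(\eta)=\0$ has first component $a\,\hat\gamma'(0)$, forcing $a=0$, and then $b\,d\nu_0(\eta)=\0$; hence $d(f,\nu)_0$ is injective if and only if $d\nu_0(\eta)\neq\0$.

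Next I would pin down the geometry of the vector $d\nu_0(\eta)$. Differentiating $|\nu|^2\equiv1$ gives $\inner{\nu(0)}{d\nu_0(\eta)}=0$. Differentiating the frontal identities $\inner{f_u}{\nu}\equiv\inner{f_v}{\nu}\equiv0$ and subtracting shows $\inner{f_u}{\nu_v}=\inner{f_v}{\nu_u}$, so the bilinear form $(X,Y)\mapsto\inner{df_0(X)}{d\nu_0(Y)}$ is symmetric; applying this with $Y=\eta$ and using $df_0(\eta)=\0$ yields $\inner{df_0(X)}{d\nu_0(\eta)}=\inner{df_0(\eta)}{d\nu_0(X)}=0$ for every $X$, so $d\nu_0(\eta)$ is orthogonal to the image of $df_0$, in particular to $\hat\gamma'(0)$. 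Thus $d\nu_0(\eta)$ is orthogonal both to $\hat\gamma'(0)$ and to $\nu(0)$, and these two vectors are linearly independent, being nonzero with $\hat\gamma'(0)=df_0(\gamma'(0))\perp\nu(0)$.

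Finally I would read off $\psi(0)$. Since $\hat\gamma'(0)$ and $\nu(0)$ span a plane and $d\nu_0(\eta)$ lies on its orthogonal complement line, we may write $d\nu_0(\eta)=c\,\bigl(\hat\gamma'(0)\times\nu(0)\bigr)$ for some $c\in\R$, with $\hat\gamma'(0)\times\nu(0)\neq\0$; consequently $\psi(0)=\det\bigl(\hat\gamma'(0),\nu(0),d\nu_0(\eta)\bigr)=c\,|\hat\gamma'(0)\times\nu(0)|^2$. Therefore $\psi(0)\neq0$ if and only if $c\neq0$, i.e. if and only if $d\nu_0(\eta)\neq\0$, which by the first paragraph is exactly the condition that $f$ be a front at $0$. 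I do not anticipate a genuinely hard step: the care needed is only in using the first-kind hypothesis (it is what makes $\{\gamma'(0),\eta\}$ a basis with $\hat\gamma'(0)\neq\0$) and in observing that the symmetry of the second fundamental form is precisely what keeps the injectivity condition from requiring anything beyond $d\nu_0(\eta)\neq\0$.
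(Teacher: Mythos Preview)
Your argument is correct. The paper does not prove this proposition at all: it is stated with attribution to \cite{fsuy,msuy} and used as a black box, so there is no ``paper's own proof'' to compare against. Your proof supplies what the paper omits, and does so by an elementary and self-contained route: reduce the immersion condition for $(f,\nu)$ to $d\nu_0(\eta)\neq\0$ using the first-kind basis $\{\gamma'(0),\eta\}$, then use the symmetry of the second fundamental form together with $|\nu|\equiv1$ to force $d\nu_0(\eta)$ into the line $\bigl(\hat\gamma'(0)\times\nu(0)\bigr)\R$, so that $\psi(0)$ detects exactly its nonvanishing. Each step is sound; the only point worth stressing (which you do note) is that the first-kind hypothesis is what guarantees both that $\{\gamma'(0),\eta\}$ is a basis and that $\hat\gamma'(0)\neq\0$, without which the reduction in the first paragraph would fail.
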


By this fact, we call the singular point of the first kind $0$ of a frontal $f$ a {\it non-front singular point} if $\psi(0)=0$. 
\begin{dfn}[\cite{saji_tera}]
	Let $f\colon(\R^2,0)\to(\R^3,0)$ be a frontal, $\nu$ its unit normal vector 
	and $0$ is a singular point of  the first kind of $f$. 
	Let $\psi$ be a function in \eqref{eq:psi} defined along the singular curve $\gamma$ passing through $0$. 
	Then $0$ is said to be a {\it $k$-non-front singular point} of $f$ 
	if there exists a positive integer $k$ such that $\psi(0)=\psi'(0)=\cdots=\psi^{(k-1)}(0)=0$ and $\psi^{(k)}(0)\neq0$ hold; 
	a point $0$ is said to be a {\it pure-frontal singular point} if $\psi=0$ along the singular curve $\gamma$ through $0$. 
\end{dfn}

\begin{dfn}
	Let $f\colon(\R^2,0)\to(\R^3,0)$ a $C^\infty$ map germ and $0$ a singular point of $f$. 
	Then 
	\begin{itemize}
		\item $f$ at $0$ is a {\it cuspidal edge} if it is $\cal{A}$-equivalent to the germ $(u,v)\mapsto(u,v^2,v^3)$ at the origin;
		\item $f$ at $0$ is a {\it cuspidal cross cap} 
		if it is $\cal{A}$-equivalent to the germ $(u,v)\mapsto(u,v^2,uv^3)$ at the origin;
		\item $f$ at $0$ is a {\it cuspidal $S_{k\geq0}^{\pm}$ singularity} 
		if it is $\cal{A}$-equivalent to the germ $(u,v)\mapsto(u,v^2,v^3(u^{k+1}\pm v^2))$ at the origin;
		\item $f$ at $0$ is a {\it $5/2$-cuspidal edge} (or a {\it rhamphoid cuspidal edge}) 
		if it is $\cal{A}$-equivalent to the germ $(u,v)\mapsto(u,v^2,v^5)$ at the origin;
		\item $f$ at $0$ is a {\it fold singularity} 
		if it is $\cal{A}$-equivalent to the germ $(u,v)\mapsto(u,v^2,0)$ at the origin.
	\end{itemize}
	Here two map germs $f,g\colon(\R^2,0)\to(\R^3,0)$ are said to be {\it $\cal{A}$-equivalent} 
	if there exist diffeomorphism germs $S\colon(\R^2,0)\to(\R^2,0)$ on the source 
	and $T\colon(\R^3,0)\to(\R^3,0)$ on the target such that $T\circ f=g\circ S$ holds. 
\end{dfn}

\begin{figure}[htbp]
	\centering
		\begin{tabular}{c}
			
			\begin{minipage}{0.3\hsize}
				\begin{center}
					\includegraphics[clip, width=3cm]{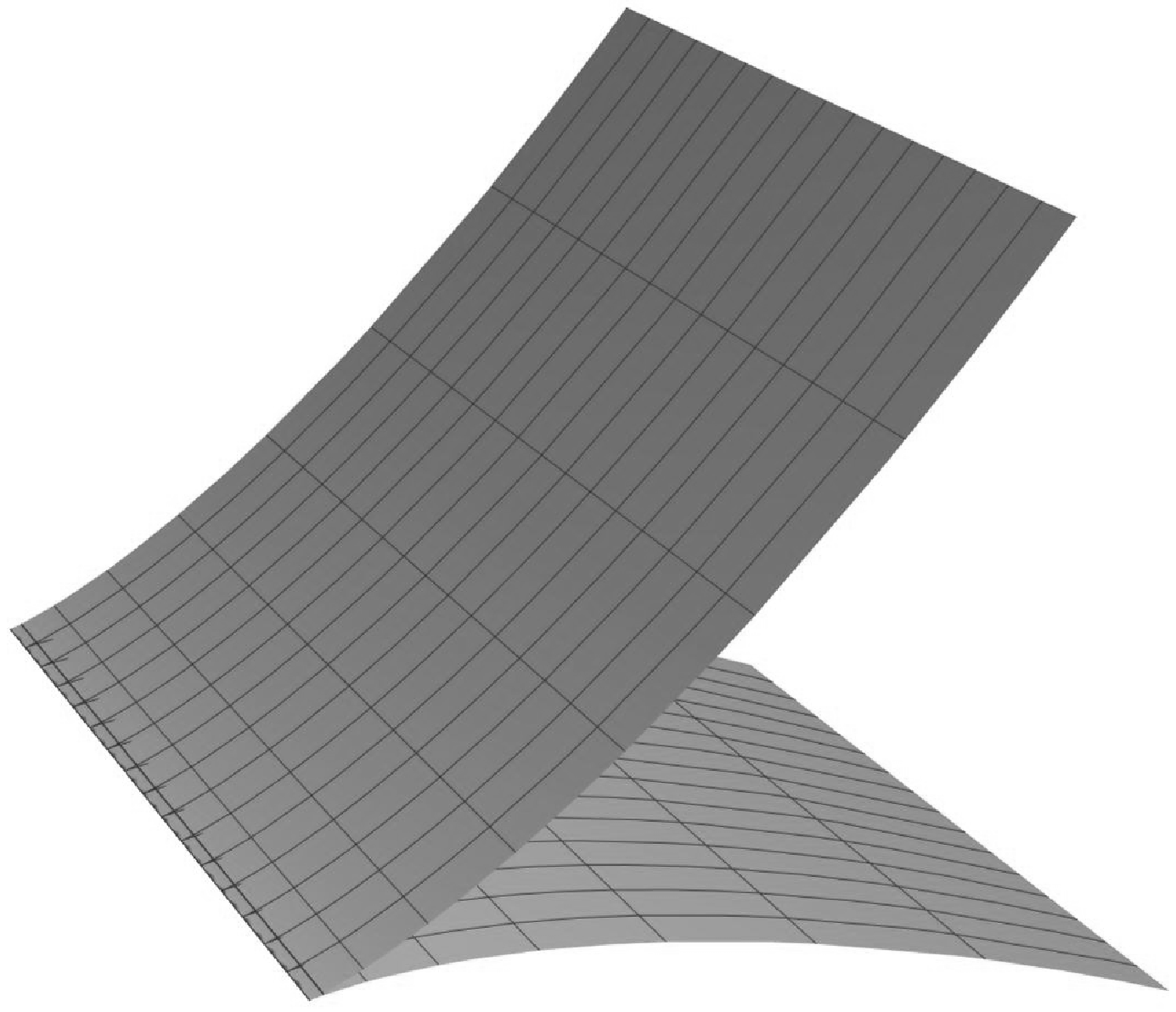}
				\end{center}
			\end{minipage}
			
			\begin{minipage}{0.3\hsize}
				\begin{center}
					\includegraphics[clip, width=3cm]{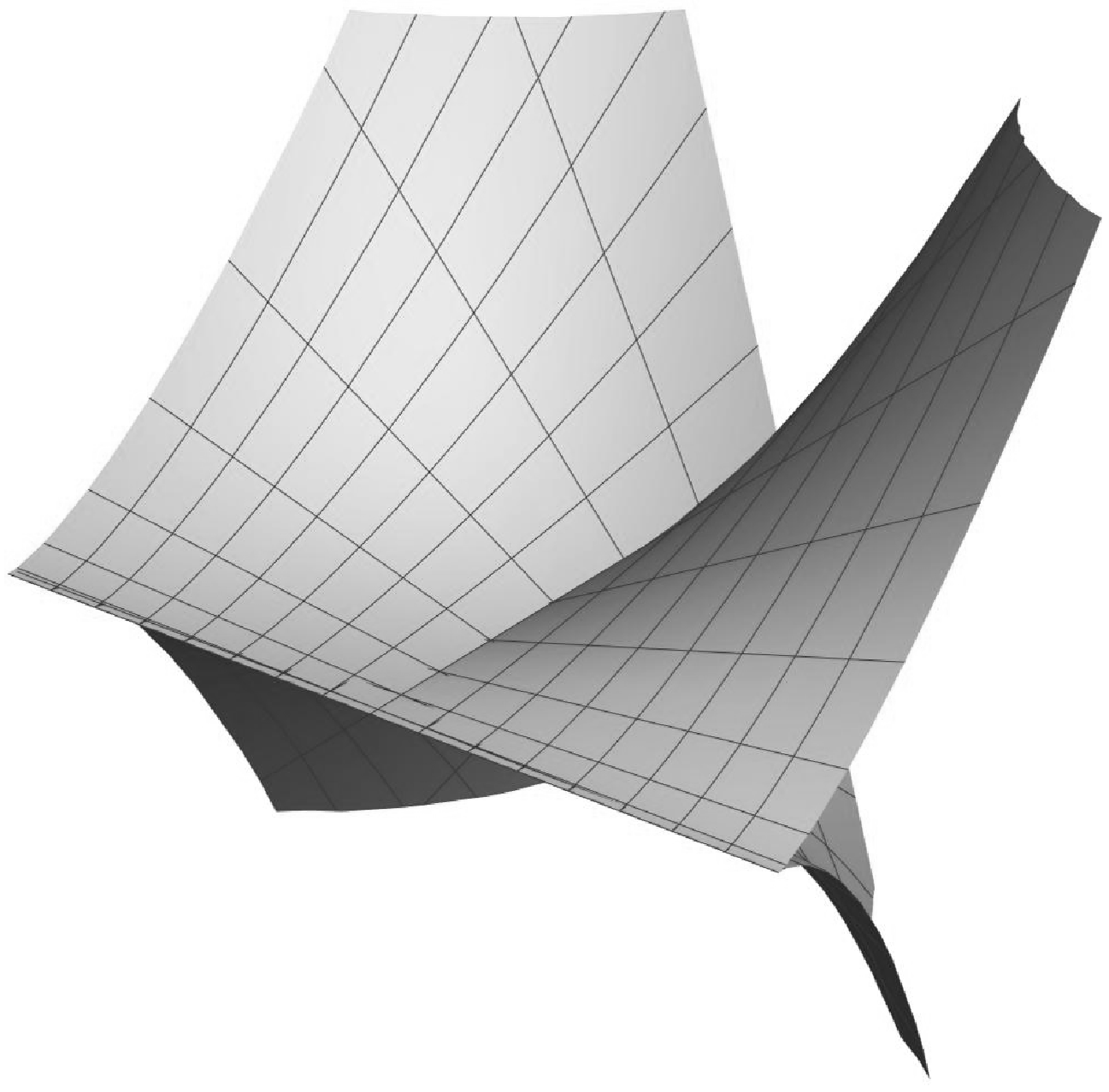}
				\end{center}
			\end{minipage}
			
			\begin{minipage}{0.3\hsize}
				\begin{center}
					\includegraphics[clip, width=3cm]{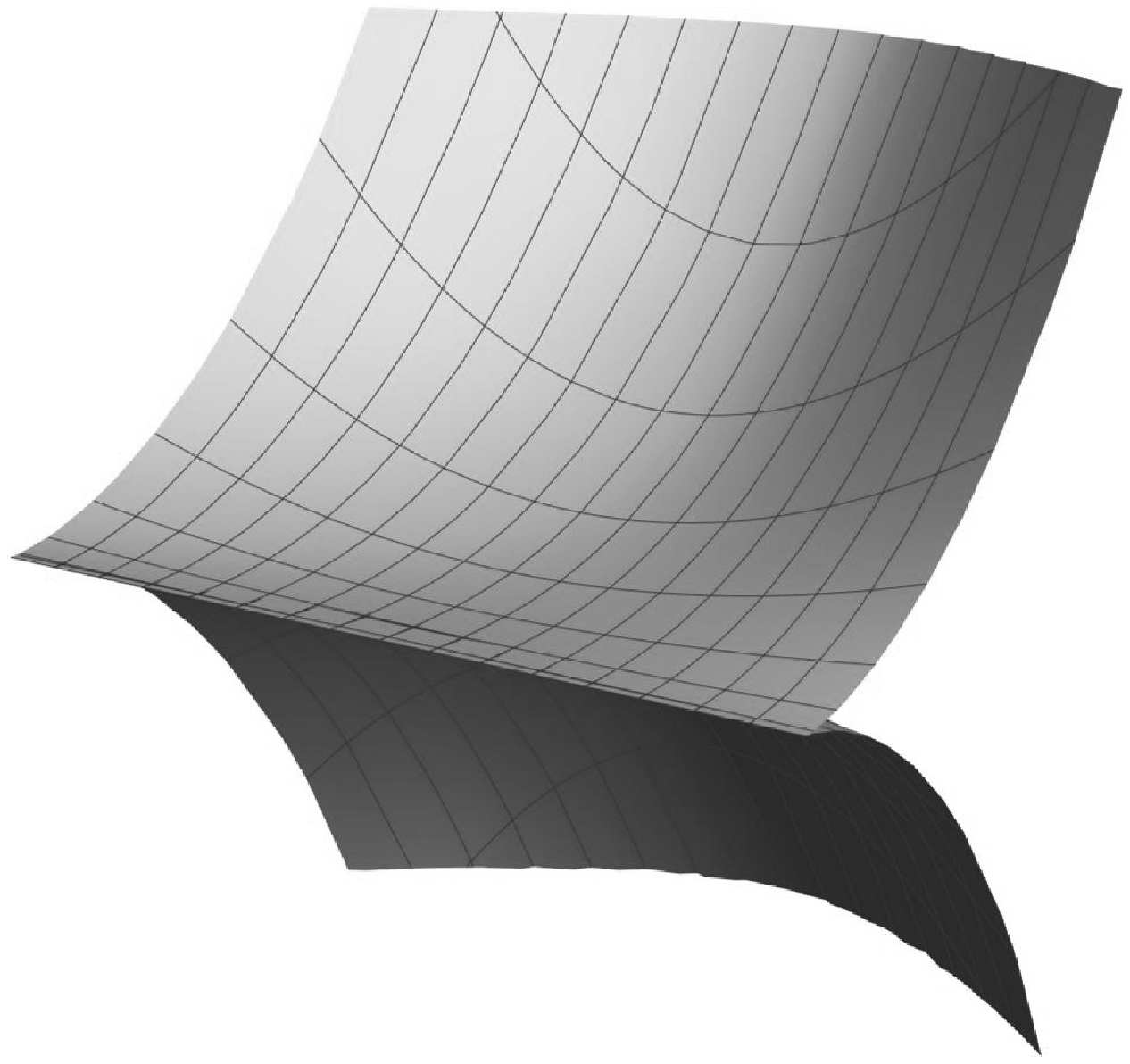}
				\end{center}
			\end{minipage}\\
			
			\begin{minipage}{0.3\hsize}
				\begin{center}
					\includegraphics[clip, width=3cm]{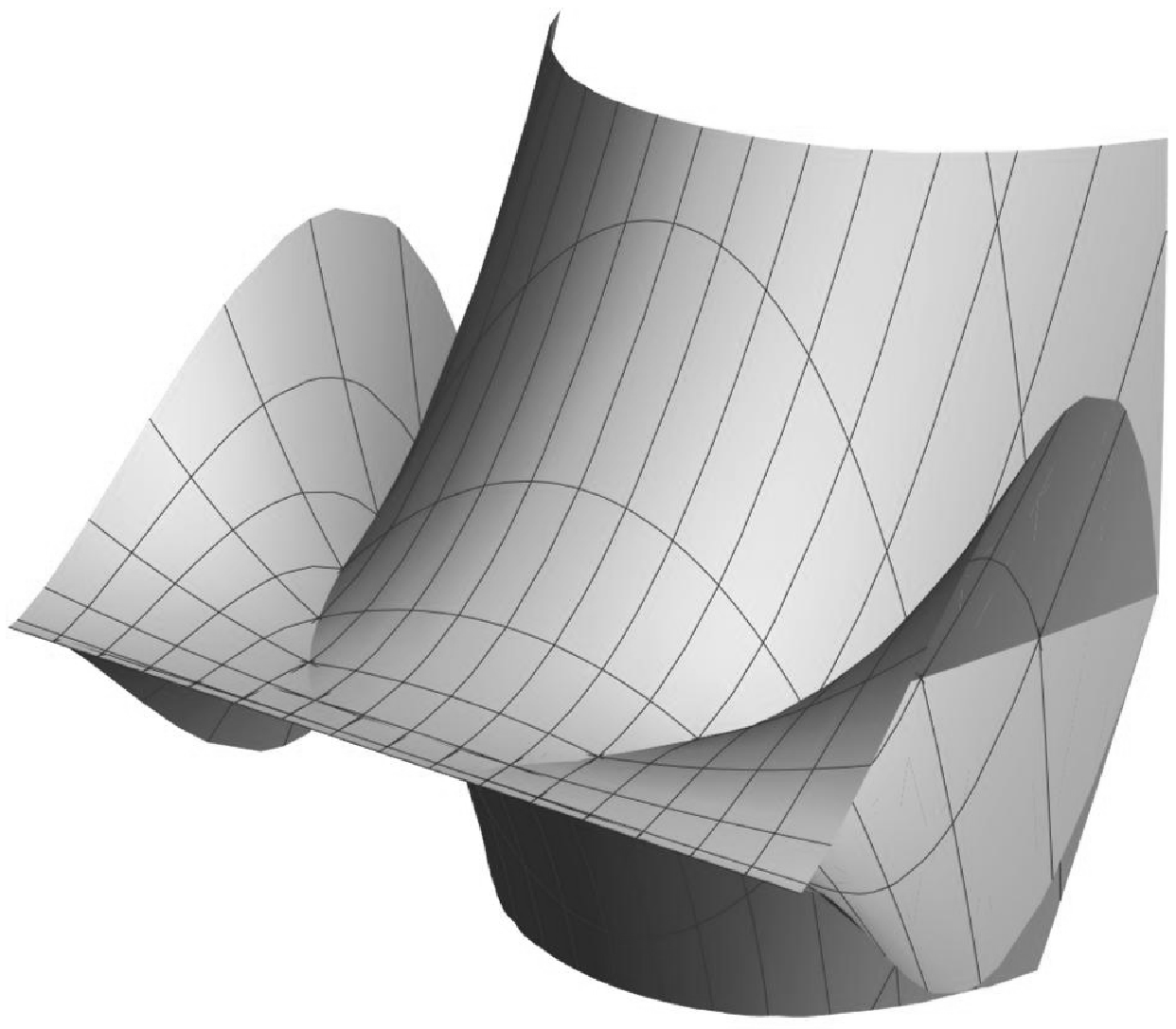}
				\end{center}
			\end{minipage}
			
			\begin{minipage}{0.3\hsize}
				\begin{center}
					\includegraphics[clip, width=3cm]{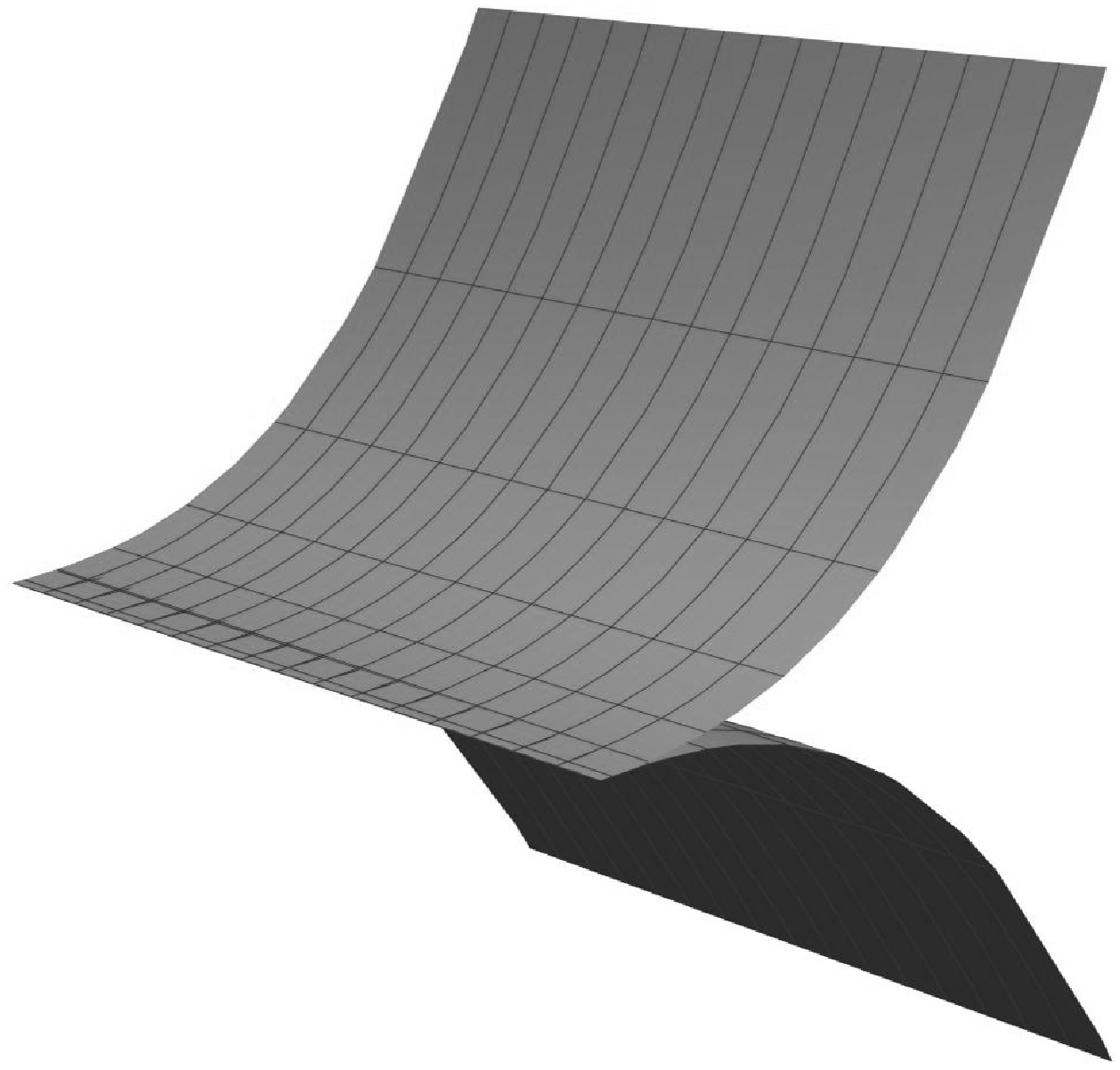}
				\end{center}
			\end{minipage}
			
			\begin{minipage}{0.3\hsize}
				\begin{center}
					\includegraphics[clip, width=3cm]{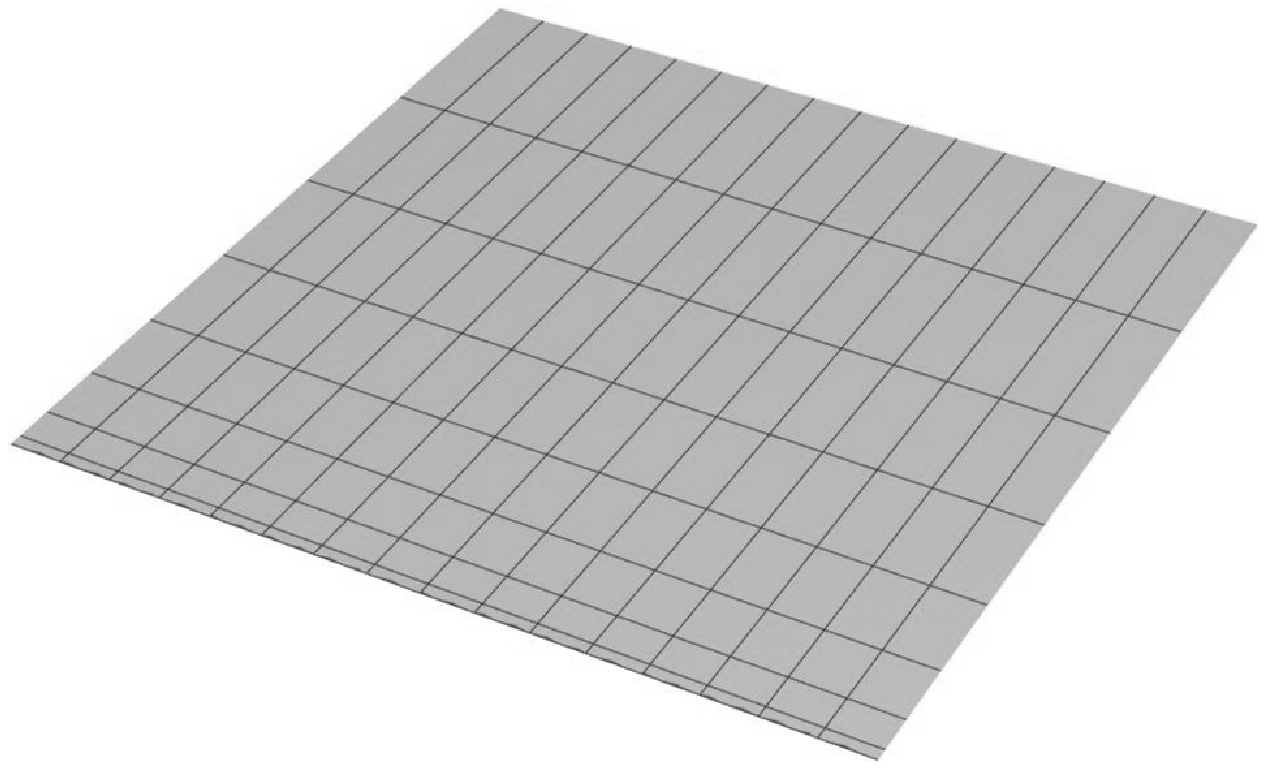}
				\end{center}
			\end{minipage}
			
		\end{tabular}
		\caption{From top left to bottom right: The images of a cuspidal edge, a cuspidal cross cap, a cuspidal $S_1^+$ singularity, 
			a cuspidal $S_1^{-}$ singularity, a $5/2$-cuspidal edge and a fold singularity. }
		\label{fig:singular}
\end{figure}

Singular points in the above definition are all singular points of the first kind. 
A cuspidal edge is a singularity of a front, but 
other singular points are examples of non-front singularities. 
Although cuspidal cross caps and cuspidal $S_k$ singularities are 
$k$-non-front singular points for some integer $k\geq1$, 
$5/2$-cuspidal edges and fold singular points are pure-frontal singularities of frontals (\cite{fsuy,hs,saji_S1,saji_tera}). 
We remark that a cuspidal $S_0$ singularity is $\cal{A}$-equivalent to a cuspidal cross cap. 
In the following, our main target is a frontal with pure-frontal singularities.

\subsection{Geometrical properties}
We recall geometrical properties of a frontal $f\colon(\R^2,0)\to(\R^3,0)$ with pure-frontal singular point $0$. 
Under this setting, 
we can take a local coordinate system $(u,v)$ on $(\R^2,0)$ (\cite{msuy}) satisfying 
\begin{itemize}
	\item the $u$-axis gives a singular curve, that is, $S(f)=\{v=0\}$,
	\item $\partial_v$ gives a null vector field $\eta$,
	\item $\{f_u,f_{vv},\nu\}$ gives an orthonormal frame along the $u$-axis.
\end{itemize}
We call this coordinate system {\it  adapted}. 

When we take an adapted coordinate system $(u,v)$ on $(\R^2,0)$, 
then there exists a map $h\colon (\R^2,0)\to\R^3$ 
such that $f_v=vh$ holds. 
Note that $h$ is perpendicular to $\nu$. 
Since $\lambda_v=\det(f_u,f_{vv},\nu)=\det(f_u,h,\nu)\neq0$ at $0$, 
the triple $\{f_u,h,\nu\}$ forms a moving frame along $f$ on $(\R^2,0)$. 
Using this frame, 
we set the following functions: 
\begin{equation}\label{eq:fundamentals}
	\begin{aligned}
		\wtil{E}&=\inner{f_u}{f_u}, & \wtil{F}&=\inner{f_u}{h}, & \wtil{G}&=\inner{h}{h},\\
		\wtil{L}&=-\inner{f_u}{\nu_u}, & \wtil{M}&=-\inner{h}{\nu_u}, & \wtil{N}&=-\inner{h}{\nu_v}.
	\end{aligned}
\end{equation}
If a frontal $f$ has a pure-frontal singular point $0$, 
then the function $\psi$ as in \eqref{eq:psi} vanishes identically along the singular curve $\gamma$ 
through $0$. 
Thus when we take an adapted coordinate system $(u,v)$, 
then $\nu_v(u,0)=0$. 
This implies that there exists a map $\nu_1\colon (\R^2,0)\to\R^3$ 
such that $\nu_v=v\nu_1$. 
Moreover, setting $\wtil{N}_1=-\inner{h}{\nu_1}$, 
the function $\wtil{N}$ as in \eqref{eq:fundamentals} can be rewritten as 
$\wtil{N}=v\wtil{N}_1$.  

\begin{lem}[\cite{saji_tera}]\label{lem:weingarten}
	Let $(u,v)$ be an adapted coordinate system around a pure-frontal singular point $0$ 
	of a frontal $f\colon (\R^2,0)\to(\R^3,0)$. 
	Then 
	\begin{equation}\label{eq:weingarten}
		\nu_u=\frac{\wtil{F}\wtil{M}-\wtil{G}\wtil{L}}{\wtil{E}\wtil{G}-\wtil{F}^2}f_u+
		\frac{\wtil{F}\wtil{L}-\wtil{E}\wtil{M}}{\wtil{E}\wtil{G}-\wtil{F}^2}h,\ 
		\nu_v=v\frac{\wtil{F}\wtil{N}_1-\wtil{G}\wtil{M}}{\wtil{E}\wtil{G}-\wtil{F}^2}f_u+
		v\frac{\wtil{F}\wtil{M}-\wtil{E}\wtil{N}_1}{\wtil{E}\wtil{G}-\wtil{F}^2}h.
	\end{equation}
\end{lem}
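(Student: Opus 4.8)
The plan is to write $\nu_u$ and $\nu_v$ in terms of the moving frame $\{f_u,h,\nu\}$ and then to solve two small linear systems. First I would note that, since $\nu$ takes values in $\bS^2$, differentiating $\inner{\nu}{\nu}\equiv 1$ gives $\inner{\nu_u}{\nu}=\inner{\nu_v}{\nu}=0$; because $\{f_u,h,\nu\}$ is a moving frame along $f$ with $\nu$ a unit vector orthogonal to both $f_u$ and $h$, it follows that $\nu_u$ and $\nu_v$ lie in the plane spanned by $f_u$ and $h$. Hence one can write $\nu_u=af_u+bh$ and $\nu_v=cf_u+dh$ for suitable $C^\infty$ functions $a,b,c,d$ on $(\R^2,0)$.

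To pin down $a$ and $b$, I would take the inner products of $\nu_u=af_u+bh$ with $f_u$ and with $h$ and use the definitions in \eqref{eq:fundamentals}; this gives the system $a\wtil{E}+b\wtil{F}=-\wtil{L}$ and $a\wtil{F}+b\wtil{G}=-\wtil{M}$. Its coefficient matrix is the Gram matrix of $\{f_u,h\}$, whose determinant $\wtil{E}\wtil{G}-\wtil{F}^2$ is strictly positive since $f_u$ and $h$ are linearly independent, so Cramer's rule applies and yields exactly the stated expressions for the $f_u$- and $h$-components of $\nu_u$.

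For $\nu_v$ the same two inner products give $c\wtil{E}+d\wtil{F}=\inner{\nu_v}{f_u}$ and $c\wtil{F}+d\wtil{G}=\inner{\nu_v}{h}=-\wtil{N}=-v\wtil{N}_1$, the last step using the pure-frontal reduction $\wtil{N}=v\wtil{N}_1$ recorded just before the lemma. The only computation that is not pure bookkeeping is to rewrite $\inner{f_u}{\nu_v}$: from the frontal conditions $\inner{f_u}{\nu}\equiv 0$ and $\inner{f_v}{\nu}\equiv 0$, differentiating the first in $v$, the second in $u$, and subtracting (using $f_{uv}=f_{vu}$) gives $\inner{f_u}{\nu_v}=\inner{f_v}{\nu_u}=v\inner{h}{\nu_u}=-v\wtil{M}$, since $f_v=vh$. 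Substituting this and solving the $2\times2$ system by Cramer's rule produces $c$ and $d$ with a common factor $v$, which is precisely the second formula in \eqref{eq:weingarten}.

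I do not expect a genuine obstacle here: the argument is linear algebra over the invertible Gram matrix of $\{f_u,h\}$ together with two elementary identities, namely the symmetry $\inner{f_u}{\nu_v}=\inner{f_v}{\nu_u}$ coming from the frontal condition and the factorization $\wtil{N}=v\wtil{N}_1$ coming from the pure-frontal hypothesis. The only mild point to watch is keeping careful track of the factor $v$ so that it can be pulled out of both components of $\nu_v$.
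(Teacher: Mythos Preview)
Your argument is correct and is exactly the standard Weingarten-type computation one would expect. Note, however, that the paper does not supply its own proof of this lemma: it is quoted from \cite{saji_tera} and stated without proof, so there is nothing in the present paper to compare your argument against. Your derivation---expanding $\nu_u,\nu_v$ in the frame $\{f_u,h\}$, reading off the inner products from \eqref{eq:fundamentals}, invoking the symmetry $\inner{f_u}{\nu_v}=\inner{f_v}{\nu_u}$ together with $f_v=vh$ and $\wtil{N}=v\wtil{N}_1$, and inverting the Gram matrix---is the natural one and would serve perfectly well as a self-contained proof here.
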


We next consider curvatures of a frontal $f\colon (\R^2,0)\to(\R^3,0)$ around a pure-frontal singular point $0$. 
Let us take an adapted coordinate system $(u,v)$ on $(\R^2,0)$. 
Then the Gaussian curvature $K$ and the mean curvature $H$ are expressed as 
\begin{equation}\label{eq:KH}
	K=\frac{\wtil{L}\wtil{N}_1-\wtil{M}^2}{\wtil{E}\wtil{G}-\wtil{F}^2},\quad 
	H=\frac{\wtil{E}\wtil{N}_1-2\wtil{F}\wtil{M}+\wtil{G}\wtil{L}}{2(\wtil{E}\wtil{G}-\wtil{F}^2)}
\end{equation}
on the set of regular points of $f$. 
Since $\wtil{E}\wtil{G}-\wtil{F}^2>0$, $K$ and $H$ can be defined on $(\R^2,0)$. 
In particular, these are bounded $C^\infty$ functions on $(\R^2,0)$. 
By using $K$ and $H$, we define {\it principal curvatures} $\kappa_j$ $(j=1,2)$ as follows:
\begin{equation}\label{eq:princ-curv}
	\kappa_1=H+\sqrt{H^2-K},\quad \kappa_2=H-\sqrt{H^2-K}.
\end{equation}
We remark that $H^2-K\geq0$ holds.
We also note that if the cuspidal torsion $\kappa_t$ (see Subsection \ref{subsec:geoinv}) does not vanish at $0$, 
then $0$ is a non-umbilic point, that is, $\kappa_1\neq\kappa_2$ at $0$ (\cite[Theorem 3.1]{saji_tera}).  
Assuming $\kappa_t\neq0$ at $0$, we can take {\it principal vectors} $\bm{V}_j$ $(j=1,2)$ 
relative to $\kappa_j$. 
Using the functions \eqref{eq:fundamentals}, $\bm{V}_j$ can be written as 
\begin{equation}\label{eq:princ-vect}
	\bm{V}_j=(-v(\wtil{M}-\kappa_j\wtil{F}),\wtil{L}-\kappa_j\wtil{E})\quad 
	(j=1,2)
\end{equation}
(\cite[(3.9)]{saji_tera}). 
\begin{rem}
By \eqref{eq:KH} and \eqref{eq:princ-curv}, the principal curvatures of a pure frontal can be extended as $C^\infty$ functions. If a frontal $f\colon (\R^2,0)\to(\R^3,0)$ has a $k$-non-front singular point $(k\geq0)$ at $0$, 
	then the principal curvatures cannot be extended as $C^\infty$ functions (\cite[Theorem 4.1]{saji_tera}). 
	On the other hand, if $f$ is a front and $0$ a cuspidal edge, 
	then one of the two principal curvature can be extended as a $C^\infty$ function 
	and another is unbounded near $0$ (\cite{muraume,tera0}).
\end{rem}

Using Lemma \ref{lem:weingarten}, we have the following.
\begin{lem}\label{lem:rod}
	Let $f\colon(\R^2,0)\to(\R^3,0)$ be a frontal with pure-frontal singular point $0$ which is a non-umbilic point. 
	Let $\bm{V}_j$ be principal vectors with respect to $\kappa_j$. 
	Then 
	\[d\nu(\bm{V}_j)=-\kappa_jdf(\bm{V}_j).\]
\end{lem}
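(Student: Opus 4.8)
The plan is to verify the asserted Rodrigues-type formula by a direct substitution of the Weingarten-type relations \eqref{eq:weingarten} and of the explicit expression \eqref{eq:princ-vect} for the principal vectors, reducing everything to the fact that $\kappa_j$ solves its characteristic equation.

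First I would write $\bm V_j=(\xi_j,\zeta_j)$ with $\xi_j=-v(\wtil M-\kappa_j\wtil F)$ and $\zeta_j=\wtil L-\kappa_j\wtil E$, the components being taken relative to $(\partial_u,\partial_v)$. Since the adapted coordinates give $f_v=vh$, one gets immediately
\[
df(\bm V_j)=\xi_j f_u+\zeta_j f_v=v\bigl[(\kappa_j\wtil F-\wtil M)f_u+(\wtil L-\kappa_j\wtil E)h\bigr].
\]
Then, plugging the formulas \eqref{eq:weingarten} for $\nu_u$ and $\nu_v$ into $d\nu(\bm V_j)=\xi_j\nu_u+\zeta_j\nu_v$ and collecting the coefficients of $f_u$ and $h$ in the moving frame $\{f_u,h\}$, one obtains $d\nu(\bm V_j)=(v/W)\bigl[A_j f_u+B_j h\bigr]$, where $W=\wtil E\wtil G-\wtil F^2>0$ and $A_j,B_j$ are explicit expressions, quadratic in the fundamental quantities $\wtil E,\dots,\wtil N_1$ and of degree one in $\kappa_j$. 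Note that the same factor $v$ appears on both sides, coming from $f_v=vh$ on one side and from $\nu_v=v\nu_1$ on the other.

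Comparing the two expressions, the lemma is equivalent to the scalar identities $A_j=W\kappa_j(\wtil M-\kappa_j\wtil F)$ and $B_j=-W\kappa_j(\wtil L-\kappa_j\wtil E)$. Expanding $A_j$ and $B_j$, the mixed terms (those in $\wtil L\wtil M$, and similarly in $\wtil M\wtil N_1$) cancel, and after rewriting the remaining second-order quantities by means of \eqref{eq:KH}, namely $\wtil L\wtil N_1-\wtil M^2=KW$ and $\wtil E\wtil N_1+\wtil G\wtil L-2\wtil F\wtil M=2HW$, each identity collapses to $\kappa_j^2-2H\kappa_j+K=0$, which is immediate from \eqref{eq:princ-curv} since $\kappa_j=H\pm\sqrt{H^2-K}$. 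A more structural way to package the same computation: the relations \eqref{eq:weingarten} say exactly that $\nu_u$ and $v^{-1}\nu_v$ are obtained from $f_u$ and $h$ by the matrix $-\mathrm{I}^{-1}\mathrm{II}$ with $\mathrm{I}=\begin{pmatrix}\wtil E&\wtil F\\\wtil F&\wtil G\end{pmatrix}$ and $\mathrm{II}=\begin{pmatrix}\wtil L&\wtil M\\\wtil M&\wtil N_1\end{pmatrix}$, and one checks that $(\wtil M-\kappa_j\wtil F,\ \kappa_j\wtil E-\wtil L)$ lies in the kernel of $\mathrm{II}-\kappa_j\mathrm{I}$ precisely because $\kappa_j^2-2H\kappa_j+K=0$; hence it is an eigenvector of $\mathrm{I}^{-1}\mathrm{II}$ with eigenvalue $\kappa_j$, and applying $-\mathrm{I}^{-1}\mathrm{II}$ to it (after restoring the factor $v$) gives $d\nu(\bm V_j)=-\kappa_j df(\bm V_j)$.

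I do not anticipate a real obstacle: the proof is a finite algebraic manipulation, and its only delicate point is the bookkeeping of the factor $v$ and of which frame, $\{f_u,f_v\}$ or $\{f_u,h\}$, each quantity is written in. Because $f_v=vh$ and $\nu_v=v\nu_1$, both $df(\bm V_j)$ and $d\nu(\bm V_j)$ are $v$ times a smooth $\R^3$-valued germ, so the claimed equality is an identity between smooth germs; all the cancellations above are polynomial, hence it holds on all of $(\R^2,0)$, including along the singular curve $v=0$.
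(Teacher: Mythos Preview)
Your proposal is correct and follows essentially the same approach as the paper: take an adapted coordinate system, write out $df(\bm V_j)$ and $d\nu(\bm V_j)$ in the frame $\{f_u,h\}$ using \eqref{eq:princ-vect} and Lemma~\ref{lem:weingarten}, and check the identity by direct computation. The paper simply states ``by direct calculations with Lemma~\ref{lem:weingarten}'' where you have spelled out that the algebra reduces to the characteristic equation $\kappa_j^2-2H\kappa_j+K=0$; your matrix reformulation with $\mathrm{I}^{-1}\mathrm{II}$ is a tidy repackaging of the same computation rather than a different route.
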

\begin{proof}
	We give a proof for $j=1$. 
	Take an adapted coordinate system $(u,v)$. 
	Then the principal vector $\bm{V}_1$ is given by 
	\[\bm{V}_1=(-v(\wtil{M}-\kappa_1\wtil{F}),\wtil{L}-\kappa_1\wtil{E})\]
	(cf. \eqref{eq:princ-vect}). 
	Thus we have 
	\begin{equation}\label{eq:dfv}
	df(\bm{V}_1)=-v(\wtil{M}-\kappa_1\wtil{F})f_u+v(\wtil{L}-\kappa_1\wtil{E})h(=v\bx_1).
	\end{equation}
	On the other hand, by direct calculations with Lemma \ref{lem:weingarten}, 
	we get 
	\[
	d\nu(\bm{V}_1)=v\kappa_1(\wtil{M}-\kappa_1\wtil{F})f_u-v\kappa_1(\wtil{L}-\kappa_1\wtil{E})h
	=-\kappa_1df(\bm{V}_1)(=-v\kappa_1\bx_1).
	\]
	Thus we obtain the assertion for $j=1$. 
	For the case of $j=2$, one can show in a similar way. 
\end{proof}

We give definitions of ridge points and sub-parabolic points for frontals as follows. 
\begin{dfn}[cf. \cite{ifrt,tera,tera0}]
	Let $f\colon(\R^2,0)\to(\R^3,0)$ be a frontal and $0$ a pure-frontal singular point of $f$. 
	Let $\kappa_j$ $(j=1,2)$ be the principal curvatures of $f$ and $\bm{V}_j$ the corresponding principal vectors. 
	Then 
	\begin{itemize}
		\item a point $0$ is a {\it $\bm{V}_j$-ridge point} if $\bm{V}_j\kappa_j=0$ holds at $0$; 
		\item a point $0$ is a {\it $k$-th order $\bm{V}_j$-ridge point} if $\bm{V}_j\kappa_j=\cdots=\bm{V}_j^{k}\kappa_j=0$ 
		and $\bm{V}_j^{k+1}\kappa_j\neq0$ hold at $0$, where $\bm{V}_j^{m}\kappa_j$ means the $m$-th order directional derivative 
		of $\kappa_j$ in the direction $\bm{V}_j$; 
		\item a point $0$ is a {\it $\bm{V}_j$-sub-parabolic point} if $\bm{V}_j\kappa_{j+1}=0$ holds at $0$, 
		where we consider $\kappa_3$ as $\kappa_1$. 
	\end{itemize}
\end{dfn}
We shall see geometrical interpretations for these using geometric invariants in the next subsection. In the rest of this work, we use $\kappa_3=\kappa_1$ as in the above definition.

\subsection{Geometric invariants}\label{subsec:geoinv}
We recall geometric invariants of a frontal defined along a singular curve. 
Let $f\colon(\R^2,0)\to(\R^3,0)$ be a frontal, $\nu$ a unit normal vector to $f$ 
and $0$ a singular point of the first kind of $f$. 
Let $\gamma$ be a singular curve for $f$ passing through $0$. 
Then along $\gamma$, we can define the following geometric invariants: 
the {\it singular curvature} $\kappa_s$ (\cite{suy_front}), the {\it limiting normal curvature} $\kappa_\nu$ (\cite{suy_front,msuy}), 
the {\it cuspidal torsion} $\kappa_t$ (\cite{ms}), the {\it cuspidal curvature} $\kappa_c$ (\cite{msuy}), 
the {\it bias} $r_b$ (\cite{os,hs}) and the {\it secondary cuspidal curvature} $r_c$ (\cite{os,hs}).  
If $0$ is a pure-frontal singular point, then $\kappa_c$ vanishes along the singular curve $\gamma$ (\cite{msuy}). 
Moreover, a pure-frontal singular point $0$ is a $5/2$-cuspidal edge of a frontal $f$ if and only if $r_c$ does not vanish 
(\cite[Theorem 4.1]{hks} and \cite[(3.12)]{hs}). 
For precise definitions and geometrical properties, 
see \cite{suy_front,msuy,ms,os,hs,hnsuy}. 
Using functions as in \eqref{eq:fundamentals}, we 
have the following characterizations (see \cite[Lemma 2.5 and Corollary 2.6]{saji_tera}). 
\begin{lem}\label{lem:invariant}
	Let $f\colon(\R^2,0)\to(\R^3,0)$ be a frontal and $0$ a pure-frontal singular point of $f$. 
	Take an adapted coordinate system $(u,v)$ on $(\R^2,0)$. 
	Then we have the following:
	\[
	\begin{aligned}
		\kappa_\nu(u)&=\wtil{L}(u,0),\quad \kappa_t(u)=\wtil{M}(u,0),\quad  \kappa_c(u)=2\wtil{N}(u,0)(\equiv0),\\
		r_b(u)&=3\wtil{N}_v(u,0)=3\wtil{N}_1(u,0),\\
		r_c(u)&=12\left(\wtil{N}_{vv}-4\wtil{F}_v\wtil{M}-2\wtil{G}_v\wtil{N}_v\right)(u,0)\\
		&=24\left((\wtil{N}_1)_{v}-2\wtil{F}_v\wtil{M}-\wtil{G}_v\wtil{N}_1\right)(u,0),
	\end{aligned}
\]
	where $\wtil{N}=v\wtil{N}_1$. Moreover, $\wtil{E}(u,0)=\wtil{G}(u,0)=1$ and $\wtil{F}(u,0)=0$.
\end{lem}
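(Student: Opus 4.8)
The statement bundles six invariants, all defined along the singular curve $\gamma(u)=(u,0)$, and rewrites them through the frame $\{f_u,h,\nu\}$; the plan is to recall each definition from the references cited in the statement and evaluate it in an adapted coordinate system, so that the whole proof reduces to Taylor expansions along $\gamma$ together with differentiation of the orthogonality relations $\inner{f_u}{\nu}=\inner{h}{\nu}=0$. I would start with the elementary consequences of adaptedness: from $f_v=vh$ one has $f_{vv}=h+vh_v$, hence $h(u,0)=f_{vv}(u,0)$, and since $\{f_u,f_{vv},\nu\}$ is orthonormal along the $u$-axis this gives at once $\wtil E(u,0)=\inner{f_u}{f_u}(u,0)=1$, $\wtil G(u,0)=\inner{h}{h}(u,0)=|f_{vv}(u,0)|^2=1$, and $\wtil F(u,0)=\inner{f_u}{f_{vv}}(u,0)=0$, which is the last assertion. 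In particular $\hat\gamma'(u)=f_u(u,0)$ is a unit vector, so $\hat\gamma$ is parametrized by arc length; this is exactly what puts the defining formulas of the invariants into their simplest form, since all normalizing factors $|\hat\gamma'|^k$ become $1$.

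Next come the three invariants that only see first-order transversal data. The limiting normal curvature is the normal component of $\hat\gamma''$, so $\kappa_\nu(u)=\inner{f_{uu}}{\nu}(u,0)$; differentiating $\inner{f_u}{\nu}=0$ in $u$ gives $\inner{f_{uu}}{\nu}=-\inner{f_u}{\nu_u}=\wtil L$, hence $\kappa_\nu(u)=\wtil L(u,0)$. The cuspidal torsion, by its definition in \cite{ms}, equals (in the orthonormal frame along the $u$-axis, so with no normalization needed) $-\inner{h}{\nu_u}$, which by \eqref{eq:fundamentals} is exactly $\wtil M$; thus $\kappa_t(u)=\wtil M(u,0)$. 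One may also read this off Lemma \ref{lem:weingarten}, which on the $u$-axis reads $\nu_u=-\wtil L f_u-\wtil M h$. Finally, for a pure-frontal singular point $\kappa_c$ vanishes identically along $\gamma$ by \cite{msuy}; in the frame this is precisely the identity $\wtil N=v\wtil N_1$ forced by $\nu_v=v\nu_1$, so $\wtil N(u,0)=0$, and matching the constant in the definition of $\kappa_c$ with the expansion $f_v=vh$ gives $\kappa_c(u)=2\wtil N(u,0)\equiv0$.

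For the two secondary invariants one must look at the quadratic and cubic transversal behaviour of $f$ and $\nu$. From $f_v=vh$ and $\nu_v=v\nu_1$ I get $f(u,v)=f(u,0)+\tfrac{v^2}{2}h(u,0)+\tfrac{v^3}{3}h_v(u,0)+O(v^4)$ and $\nu(u,v)=\nu(u,0)+\tfrac{v^2}{2}\nu_1(u,0)+\tfrac{v^3}{3}(\nu_1)_v(u,0)+O(v^4)$. Feeding these into the definition of the bias in \cite{os,hs} (where $r_b$ is three times the $h$-component, up to sign, of the quadratic transversal part of $\nu$) yields $r_b(u)=3\wtil N_1(u,0)$, and since $\wtil N=v\wtil N_1$ gives $\wtil N_v(u,0)=\wtil N_1(u,0)$ this is also $3\wtil N_v(u,0)$. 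For the secondary cuspidal curvature one needs one more order: substituting the expansions into the defining expression for $r_c$ in \cite{os,hs} and re-expressing everything in the frame $\{f_u,h,\nu\}$ produces the combination $r_c(u)=12(\wtil N_{vv}-4\wtil F_v\wtil M-2\wtil G_v\wtil N_v)(u,0)$, the cross terms $\wtil F_v$ and $\wtil G_v$ coming from differentiating $\inner{f_u}{h}$ and $\inner{h}{h}$ along $\gamma$. The second displayed form for $r_c$ is then pure algebra: $\wtil N=v\wtil N_1$ gives $\wtil N_v(u,0)=\wtil N_1(u,0)$ and $\wtil N_{vv}(u,0)=2(\wtil N_1)_v(u,0)$, so $12(\wtil N_{vv}-4\wtil F_v\wtil M-2\wtil G_v\wtil N_v)(u,0)=24((\wtil N_1)_v-2\wtil F_v\wtil M-\wtil G_v\wtil N_1)(u,0)$.

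The only genuinely laborious point is the computation of $r_c$: it is the sole third-order identity, it is sensitive to the precise normalization in the definition of the secondary cuspidal curvature, and it requires careful bookkeeping of the cross terms that appear when the moving frame $\{f_u,h,\nu\}$ is differentiated in $v$; everything else is immediate from the adapted-coordinate setup and from differentiating $\inner{f_u}{\nu}=\inner{h}{\nu}=0$. As the statement indicates, one may instead simply invoke \cite[Lemma 2.5 and Corollary 2.6]{saji_tera}, where these formulas are proved in exactly this form.
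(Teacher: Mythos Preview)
The paper does not actually prove this lemma: it is stated with the parenthetical citation ``(see \cite[Lemma 2.5 and Corollary 2.6]{saji_tera})'' and no proof is given, so the paper's ``own proof'' is simply a reference. Your final sentence acknowledges this and is, strictly speaking, all that is required here.

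Your outline goes further than the paper and is correct as a sketch. The identities for $\wtil E,\wtil F,\wtil G$ on the $u$-axis, and for $\kappa_\nu,\kappa_t,\kappa_c$, are immediate consequences of the adapted-coordinate conditions and the relations $f_v=vh$, $\nu_v=v\nu_1$, exactly as you say; the algebraic conversion between the two displayed forms of $r_c$ via $\wtil N_v(u,0)=\wtil N_1(u,0)$ and $\wtil N_{vv}(u,0)=2(\wtil N_1)_v(u,0)$ is also right. The only place where your write-up is not self-contained is the derivation of the formulas for $r_b$ and especially $r_c$: you appeal to the defining expressions in \cite{os,hs} without reproducing them, so the ``laborious'' step is not actually carried out. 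That is fine for a lemma that is being quoted from \cite{saji_tera}, but if you intend this to stand as an independent proof you would need to write out those definitions and perform the substitution explicitly; otherwise the argument for $r_c$ is a citation dressed up as a computation.
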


We give the following characterizations of $\bV_j$-ridge and $\bV_{j+1}$-sub-parabolic point of a frontal 
by geometric invariants. 
\begin{prop}\label{prop:ridge}
	Let $f\colon(\R^2,0)\to(\R^3,0)$ be a frontal, $\nu$ its unit normal vector and $0$ a pure-frontal singular point. 
	Suppose that $\kappa_t\neq0$ holds at $0$. 
	Then for each $j=1,2$, a point $0$ is both a $\bm{V}_j$-ridge and $\bm{V}_{j+1}$-sub-parabolic point of $f$ if and only if $r_c=0$ at $0$.
\end{prop}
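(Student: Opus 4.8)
The plan is to work in an adapted coordinate system $(u,v)$ around $0$ and to reduce both the ridge and the sub-parabolic conditions to the vanishing at $0$ of a single $v$-derivative of a principal curvature, which I then compute explicitly in terms of $r_c$.

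First I would note that at a pure-frontal singular point the two principal vectors are proportional there. By \eqref{eq:princ-vect} with $\wtil{F}(u,0)=0$ and $\wtil{E}(u,0)=1$ (Lemma \ref{lem:invariant}), one gets $\bV_j(0)=(0,\kappa_\nu(0)-\kappa_j(0))$, so $\bV_1(0)\parallel\bV_2(0)\parallel\partial_v$. Writing $a=\kappa_\nu(0)$, $c=\wtil{N}_1(0)=r_b(0)/3$, $b=\kappa_t(0)$, formula \eqref{eq:KH} and Lemma \ref{lem:invariant} give $H(0)=(a+c)/2$, $K(0)=ac-b^2$, hence $(H^2-K)(0)=(a-c)^2/4+b^2\ge b^2>0$ because $\kappa_t\ne0$; being positive, it stays positive on a neighborhood, so $\sqrt{H^2-K}$ and the $\kappa_j$ are $C^\infty$ near $0$, and moreover $\kappa_\nu(0)-\kappa_1(0)=(a-c)/2-\sqrt{(a-c)^2/4+b^2}<0$ while $\kappa_\nu(0)-\kappa_2(0)>0$. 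Since the $u$-component of $\bV_j$ carries a factor $v$, for every index $i$ we have $(\bV_j\kappa_i)(0)=(\kappa_\nu(0)-\kappa_j(0))\,(\kappa_i)_v(0)$ with a nonzero first factor. Recalling $\kappa_3=\kappa_1$ and $\bV_3=\bV_1$, the assertion ``$0$ is a $\bV_j$-ridge and a $\bV_{j+1}$-sub-parabolic point'' is therefore equivalent, for $j=1$, to $(\kappa_1)_v(0)=0$ and, for $j=2$, to $(\kappa_2)_v(0)=0$.

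Next I would compute $(\kappa_j)_v(0)$, using $(\kappa_j)_v=H_v\pm(2HH_v-K_v)/(2\sqrt{H^2-K})$ with sign $+$ for $j=1$ and $-$ for $j=2$. The core of the argument is the evaluation of $H_v$ and $K_v$ along the $u$-axis. From $f_v=vh$ we get $f_{uv}(u,0)=0$, hence $\wtil{E}_v(u,0)=0$; differentiating $\nu_v=v\nu_1$ in $u$ gives $\nu_{uv}(u,0)=0$, hence $\wtil{L}_v(u,0)=0$. Next, Lemma \ref{lem:weingarten} gives $\nu_u(u,0)=-\kappa_\nu f_u-\kappa_t h$, while $\inner{h_v}{f_u}(u,0)=\wtil{F}_v(u,0)$ and $\inner{h_v}{h}(u,0)=\tfrac12\wtil{G}_v(u,0)$; since $\wtil{M}_v(u,0)=-\inner{h_v}{\nu_u}(u,0)$, this yields $2\wtil{M}_v=2\wtil{L}\wtil{F}_v+\wtil{M}\wtil{G}_v$ along the $u$-axis. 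Plugging these structural identities, together with the expression for $(\wtil{N}_1)_v(u,0)$ read off from the $r_c$-formula in Lemma \ref{lem:invariant}, into the quotient derivatives of \eqref{eq:KH}, all the terms involving $\wtil{F}_v,\wtil{M}_v,\wtil{G}_v$ cancel and one obtains $H_v(u,0)=r_c(u)/48$ and $K_v(u,0)=\kappa_\nu(u)\,r_c(u)/24$.

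Substituting these back, one finds $(\kappa_j)_v(0)=\dfrac{r_c(0)}{48}\left(1\pm\dfrac{c-a}{2\sqrt{(a-c)^2/4+b^2}}\right)$, and since $|c-a|/2<\sqrt{(a-c)^2/4+b^2}$ (again using $\kappa_t(0)=b\ne0$) the parenthesized factor is a nonzero real number for each sign. Hence $(\kappa_j)_v(0)=0$ if and only if $r_c(0)=0$, for $j=1,2$, which together with the reduction in the second paragraph gives the proposition. I expect the only delicate part to be the cancellations in the third paragraph: one must be careful to use all of $\wtil{L}_v(u,0)=0$, $2\wtil{M}_v=2\wtil{L}\wtil{F}_v+\wtil{M}\wtil{G}_v$ and the formula for $(\wtil{N}_1)_v(u,0)$ to see that $H_v$ and $K_v$ collapse to pure multiples of $r_c$; everything else is routine or an immediate consequence of $\kappa_t\ne0$.
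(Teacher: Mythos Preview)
Your proof is correct and follows essentially the same route as the paper's: reduce both the ridge and sub-parabolic conditions at $0$ to the single equation $(\kappa_j)_v(0)=0$ via the observation that $\bV_j(0)$ is a nonzero multiple of $\partial_v$, then express $(\kappa_j)_v(0)$ as a nonzero multiple of $r_c(0)$ using $H_v(u,0)=r_c/48$ and $K_v(u,0)=\kappa_\nu r_c/24$. The only difference is that the paper quotes these last two identities from \cite[Lemma~4.3]{hs}, whereas you derive them from scratch using $\wtil{E}_v=\wtil{L}_v=0$, $2\wtil{M}_v=2\wtil{L}\wtil{F}_v+\wtil{M}\wtil{G}_v$ and the $r_c$-formula of Lemma~\ref{lem:invariant}; your final expression $(\kappa_j)_v(0)=\tfrac{r_c}{48}\bigl(1\pm\tfrac{c-a}{2\sqrt{(a-c)^2/4+b^2}}\bigr)$ is just a rewriting of the paper's $(\kappa_j)_v(0)=\tfrac{(-1)^{j+1}r_c}{48\sqrt{\Gamma}}(\kappa_j-\kappa_\nu)$.
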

\begin{proof}
	We first show the case of $j=1$. 
	Take an adapted coordinate system $(u,v)$. 
	Then by \eqref{eq:princ-vect} and Lemma \ref{lem:invariant}, $\bm{V}_1\kappa_1$ and $\bm{V}_2\kappa_1$ can be written as 
	\[\bm{V}_1\kappa_1=(\kappa_\nu-\kappa_1)(\kappa_1)_v,\quad 
	\bm{V}_2\kappa_1=(\kappa_\nu-\kappa_2)(\kappa_1)_v\]
	at $0$. 
	We now remark that 
	\[\kappa_\nu-\kappa_j
	=\frac{1}{2}
	\left\{\left(\kappa_\nu-\frac{r_b}{3}\right)+(-1)^j\sqrt{\left(\kappa_\nu-\frac{r_b}{3}\right)^2+4\kappa_t^2}\right\}\neq0\]
	at $0$ for $j=1,2$ if $\kappa_t\neq0$ (\cite[(3.8)]{saji_tera}). 
	Thus $\kappa_j\neq\kappa_\nu$ at $0$, and hence 
	$\bm{V}_1\kappa_1=\bm{V}_2\kappa_1=0$ at $0$ is equivalent to $(\kappa_1)_v=0$ at $0$. 
	We write $\kappa_1$ as $\kappa_1=H+\sqrt{\Gamma}$, where $\Gamma=H^2-K$. 
	Then we have 
	\[
	\begin{aligned}
		(\kappa_1)_v&=H_v+\left(HH_v-\frac{K_v}{2}\right)\Gamma^{-1/2}
		=\frac{1}{\sqrt{\Gamma}}\left(H_v(\sqrt{\Gamma}+H)-\frac{K_v}{2}\right)\\
		&=\frac{1}{\sqrt{\Gamma}}\left(H_v\kappa_1-\frac{K_v}{2}\right).
	\end{aligned}
	\]
	By \cite[Lemma 4.3]{hs}, $H_v$ and $K_v$ are represented as 
	\[H_v=\frac{r_c}{48},\quad K_v=\frac{r_{\Pi}}{24}=\frac{\kappa_\nu r_c}{24}\]
	at $0$. 
	Therefore it holds that 
	\begin{equation}\label{eq:diff-kj}
		(\kappa_1)_v=\frac{r_c}{48\sqrt{\Gamma}}(\kappa_1-\kappa_\nu)
	\end{equation}
	at $0$. 
	This completes the proof for this case. 
	For the case of $j=2$, we show in the similar way by using 
	\[
	(\kappa_2)_v
	=\frac{-1}{\sqrt{\Gamma}}\left(H_v\kappa_2-\frac{K_v}{2}\right)
	=-\frac{r_c}{48\sqrt{\Gamma}}(\kappa_2-\kappa_\nu)
	\]
	at $0$. 
\end{proof}

\begin{cor}\label{cor:ridge}
	Under the same assumptions in Proposition \ref{prop:ridge}, 
	a pure-frontal singular point $0$ of a frontal $f\colon(\R^2,0)\to(\R^3,0)$ is not a $5/2$-cuspidal edge 
	if and only if $0$ is a $\bm{V}_j$-ridge and $\bm{V}_{j+1}$-sub-parabolic point of $f$ for each $j=1,2$. 
\end{cor}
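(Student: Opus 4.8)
The plan is to deduce the statement directly from Proposition \ref{prop:ridge} together with the known characterization of $5/2$-cuspidal edges among pure-frontal singular points recalled in Subsection \ref{subsec:geoinv}. First I would invoke that characterization: for a pure-frontal singular point $0$ of $f$, the origin is a $5/2$-cuspidal edge if and only if the secondary cuspidal curvature $r_c$ does not vanish at $0$ (this is \cite[Theorem 4.1]{hks} and \cite[(3.12)]{hs}). Taking the contrapositive, $0$ fails to be a $5/2$-cuspidal edge precisely when $r_c = 0$ at $0$.

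Next I would apply Proposition \ref{prop:ridge}. Under the standing hypothesis $\kappa_t \neq 0$ at $0$ (which is part of ``the same assumptions'' and also guarantees non-umbilicity, hence the existence of the principal vectors $\bm{V}_j$ used in the definitions of ridge and sub-parabolic points), Proposition \ref{prop:ridge} says that for each fixed $j = 1,2$ the origin is simultaneously a $\bm{V}_j$-ridge point and a $\bm{V}_{j+1}$-sub-parabolic point if and only if $r_c = 0$ at $0$. Since the condition $r_c(0) = 0$ does not involve $j$, the combined statement ``$0$ is a $\bm{V}_j$-ridge and $\bm{V}_{j+1}$-sub-parabolic point of $f$ for each $j = 1,2$'' is equivalent to $r_c(0) = 0$ as well.

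Chaining the two equivalences yields the corollary: $0$ is not a $5/2$-cuspidal edge $\iff r_c(0) = 0 \iff 0$ is a $\bm{V}_j$-ridge and $\bm{V}_{j+1}$-sub-parabolic point of $f$ for each $j = 1,2$. There is essentially no computational obstacle here, since every step is a direct citation of a result already in place; the only point to be careful about is recording that the hypothesis $\kappa_t \neq 0$ at $0$ is active, so that Proposition \ref{prop:ridge} and the principal-vector formalism genuinely apply, and that the $j$-independence of the condition $r_c(0)=0$ is what lets the ``for each $j=1,2$'' quantifier be absorbed cleanly.
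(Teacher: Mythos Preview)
Your proof is correct and follows exactly the same approach as the paper's own proof, which simply cites the equivalence ``$5/2$-cuspidal edge $\iff r_c\neq0$'' and then invokes Proposition \ref{prop:ridge}. Your version is just a more detailed unpacking of that two-line argument.
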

\begin{proof}
	Since $f$ at $0$ is a $5/2$-cuspidal edge if and only if $r_c\neq0$, 
	we have the assertion by Proposition \ref{prop:ridge}. 
\end{proof}
This gives a geometrical interpretation for a pure-frontal singular point 
either a $5/2$-cuspidal edge or not. 
 We note that an intrinsic criterion for $5/2$-cuspidal edge is given by \cite[Corollary 4.5]{hs} 
	(see also \cite{hnsuy}).

\section{Normal congruence and its singular value sets}\label{sec:congruence}
We consider a normal congruence of a frontal $f\colon(\R^2,0)\to(\R^3,0)$. 
For cases of a regular surface and a front, see \cite{ist-congru,tera1}.

We assume that principal curvatures $\kappa_j$ $(j=1,2)$ of $f$ do not vanish on $(\R^2,0)$. 
A {\it normal congruence} $\F\colon(\R^3,0)\to(\R^3,0)$ is given by 
\begin{equation}\label{eq:F}
	\F(u,v,w)=f(u,v)+w\nu(u,v),
\end{equation}
where $\nu$ is a unit normal vector to $f$. 
Calculating the Jacobian $\det J_{\F}$ of $\F$, 
we have 
\[\det J_{\F}=\det(\F_u,\F_v,\F_w)=(1-w\kappa_1)(1-w\kappa_2)\lambda,\]
where $\kappa_j$ $(j=1,2)$ are principal curvatures of $f$ and $\lambda$ is the signed area density function of $f$. 
Thus the set of singular points of $\F$ is $S(\F)=(S(f)\times\R)\cup S_1\cup S_2$, where 
\[S_j=\{(u,v,w)\in(\R^3,0)\ |\ 1-w\kappa_j(u,v)=0\}\quad (j=1,2),\]
and hence the singular locus $\F(S(\F))$ of $\F$ is the union 
$\F(S(f)\times\R)\cup\F(S_1)\cup\F(S_2)$. 

Considering this, we define $\nr\colon(\R^2,0)\to(\R^3,0)$ and $C_j\colon(\R^2,0)\to(\R^3,0)$ $(j=1,2)$ by 
\begin{equation}\label{eq:maps}
	\nr(u,w)=\hat{\gamma}(u)+w\hat{\nu}(u),\quad 
	C_j(u,v)=f(u,v)+\rho_j(u,v)\nu(u,v),
\end{equation}
where $\hat{\nu}=\nu\circ\gamma$ and $\rho_j=1/\kappa_j$ $(j=1,2)$. 
We notice that the image of $\nr$ coincides with $\F(S(f)\times\R)$, and the image of $C_j$ coincides with $\F(S_j)$ $(j=1,2)$. 
We call $\nr$ and $C_j$ $(j=1,2)$ the {\it normal ruled surface} along $\hat{\gamma}$ 
and the ({\it normal}) {\it focal surfaces} or {\it caustics} of $f$ associated to $\kappa_j$, respectively. 
In the rest of this section, we study singularities of $\nr$. 
We shall investigate singularities and certain geometric properties of $C_j$ in the next section.

\subsection{Singularities of $\nr$}
We deal with the normal ruled surface $\nr$ along the singular locus $\hat{\gamma}$ of a frontal $f$ 
with a pure-frontal singular point. 
For a frontal $f\colon(\R^2,0)\to(\R^3,0)$, we take an adapted coordinate system $(u,v)$. 
We set $\hat{h}$ by $\hat{h}(u)=h(u,0)$, 
where $h$ is a map satisfying $f_v=vh$. 
In this case, $\{\hat{\gamma}',\hat{h},\hat{\nu}\}$ is an orthonormal frame along $\hat{\gamma}$. 
Moreover, we have the following formula (cf. \cite[Proposition 3.1]{ist_cusp} and \cite[Lemma 1.3]{fukui}):
\begin{equation}\label{eq:frenet}
	\begin{pmatrix} \hat{\gamma}' \\ \hat{h} \\ \hat{\nu} \end{pmatrix}'=
	\begin{pmatrix} 0 & \kappa_s & \kappa_\nu \\
		-\kappa_s & 0 & \kappa_t\\
		-\kappa_\nu & -\kappa_t & 0
	\end{pmatrix}
	\begin{pmatrix} \hat{\gamma}' \\ \hat{h} \\ \hat{\nu} \end{pmatrix}.
\end{equation}
We assume that $\nr$ is {\it noncylindrical}, that is, $\hat{\nu}'$ does not vanish identically. 
By \eqref{eq:frenet}, this condition is equivalent to $(\kappa_\nu,\kappa_t)\neq(0,0)$ along $\gamma$. 

\begin{lem}\label{lem:sing-nr}
	A point $(u_0,w_0)$ is a singular point of $\nr$ as in \eqref{eq:maps} 
	if and only if $\kappa_t(u_0)=0$ and $w_0=1/\kappa_\nu(u_0)$ hold. 
\end{lem}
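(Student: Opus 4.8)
The plan is to compute the partial derivatives of $\nr(u,w)=\hat{\gamma}(u)+w\hat{\nu}(u)$ directly and determine when they fail to span a $2$-plane. First I would write $\nr_w=\hat{\nu}$ and, using the Frenet-type formula \eqref{eq:frenet}, $\nr_u=\hat{\gamma}'+w\hat{\nu}'=\hat{\gamma}'+w(-\kappa_\nu\hat{\gamma}'-\kappa_t\hat{h})=(1-w\kappa_\nu)\hat{\gamma}'-w\kappa_t\hat{h}$. Since $\{\hat{\gamma}',\hat{h},\hat{\nu}\}$ is an orthonormal frame along $\hat{\gamma}$, the vector $\nr_w=\hat{\nu}$ is never zero and is orthogonal to both $\hat{\gamma}'$ and $\hat{h}$. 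Hence $(u_0,w_0)$ is a singular point of $\nr$ exactly when $\nr_u$ is parallel to $\nr_w=\hat{\nu}$ there, which (again by orthonormality) forces the $\hat{\gamma}'$- and $\hat{h}$-components of $\nr_u$ to vanish: $1-w_0\kappa_\nu(u_0)=0$ and $w_0\kappa_t(u_0)=0$.

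The second step is to combine these two equations. From $1-w_0\kappa_\nu(u_0)=0$ we get $w_0\neq0$ (so $\kappa_\nu(u_0)\neq0$, which is consistent since $\nr$ is assumed noncylindrical, i.e.\ $(\kappa_\nu,\kappa_t)\neq(0,0)$ along $\gamma$), and therefore $w_0\kappa_t(u_0)=0$ reduces to $\kappa_t(u_0)=0$. Conversely, if $\kappa_t(u_0)=0$ and $w_0=1/\kappa_\nu(u_0)$ (which is well-defined precisely because noncylindricality together with $\kappa_t(u_0)=0$ gives $\kappa_\nu(u_0)\neq0$), then $\nr_u=(1-w_0\kappa_\nu(u_0))\hat{\gamma}'-w_0\kappa_t(u_0)\hat{h}=\0$ at $(u_0,w_0)$, so $\rank d\nr<2$ there. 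This gives both implications.

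The computation is entirely routine; the only points requiring care are bookkeeping ones. I would make sure to use the noncylindrical hypothesis to guarantee that $1/\kappa_\nu(u_0)$ makes sense at a point where $\kappa_t$ vanishes, and to note that the orthonormality of $\{\hat{\gamma}',\hat{h},\hat{\nu}\}$ along the $u$-axis — part of the definition of an adapted coordinate system, and recorded just before the lemma — is what lets me read off the rank condition coordinate-wise rather than wrestling with a determinant. There is essentially no obstacle here; the lemma is a direct unwinding of \eqref{eq:frenet} in the adapted frame.
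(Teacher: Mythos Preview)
Your proof is correct and essentially follows the paper's approach: both compute $\nr_u=(1-w\kappa_\nu)\hat{\gamma}'-w\kappa_t\hat{h}$ and $\nr_w=\hat{\nu}$ via \eqref{eq:frenet}, and then use the orthonormal frame to read off when the rank drops. The only cosmetic difference is that the paper writes out the cross product $\nr_u\times\nr_w=-(1-w\kappa_\nu)\hat{h}-w\kappa_t\hat{\gamma}'$ explicitly, while you argue directly that $\nr_u$ must vanish; your handling of the noncylindrical hypothesis to ensure $\kappa_\nu(u_0)\neq0$ is a bit more explicit than the paper's, but the arguments are otherwise the same.
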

\begin{proof}
	By \eqref{eq:frenet}, we have 
	\begin{equation}\label{eq:diff-nr}
		\nr_u=(1-w\kappa_\nu)\hat{\gamma}'-w\kappa_t\hat{h},\quad 
		\nr_w=\hat{\nu}.
	\end{equation}
	Thus it holds that 
	\[\nr_u\times\nr_w=-(1-w\kappa_\nu)\hat{h}-w\kappa_t\hat{\gamma}'.\]
	Hence we have the assertion.
\end{proof}

We next consider $\nr$ to be a developable surface or not. 
For this, we give the following characterization:
\begin{prop}\label{prop:develop}
	The normal ruled surface $\nr$ is developable  
	if and only if $\kappa_t=0$ along $\gamma$. 
\end{prop}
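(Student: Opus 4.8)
The plan is to recall the standard fact that a ruled surface $\bx(u,w) = c(u) + w\,\delta(u)$ (with $\delta$ a unit vector field) is developable if and only if $\det(c', \delta, \delta') \equiv 0$, and then apply it to $\nr(u,w) = \hat\gamma(u) + w\,\hat\nu(u)$. So the generator direction is $\delta = \hat\nu$ and the base curve is $\hat\gamma$, and I need to compute $\det(\hat\gamma', \hat\nu, \hat\nu')$ and show it vanishes identically along $\gamma$ exactly when $\kappa_t \equiv 0$.

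First I would use the Frenet-type formula \eqref{eq:frenet}: since $\hat\nu' = -\kappa_\nu \hat\gamma' - \kappa_t \hat h$, the triple product becomes
\[
\det(\hat\gamma',\,\hat\nu,\,\hat\nu') = \det\bigl(\hat\gamma',\,\hat\nu,\,-\kappa_\nu\hat\gamma' - \kappa_t\hat h\bigr) = -\kappa_t\,\det(\hat\gamma',\,\hat\nu,\,\hat h).
\]
Because $\{\hat\gamma',\hat h,\hat\nu\}$ is an orthonormal frame along $\hat\gamma$, the determinant $\det(\hat\gamma',\hat\nu,\hat h) = -\det(\hat\gamma',\hat h,\hat\nu) = \pm 1$ is a nonzero constant. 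Hence $\det(\hat\gamma',\hat\nu,\hat\nu') = \mp\kappa_t$, which vanishes identically along $\gamma$ if and only if $\kappa_t \equiv 0$ along $\gamma$. This gives the equivalence directly.

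Alternatively — and perhaps this is the cleaner route for the write-up since it parallels Lemma \ref{lem:sing-nr} — I would argue via the unit normal of $\nr$. From \eqref{eq:diff-nr} we have $\nr_u \times \nr_w = -(1-w\kappa_\nu)\hat h - w\kappa_t\hat\gamma'$, so away from singular points the direction of the surface normal of $\nr$ is $-(1-w\kappa_\nu)\hat h - w\kappa_t\hat\gamma'$. The surface $\nr$ is developable precisely when this normal direction is constant along each ruling (each fixed $u$, varying $w$), equivalently when the $w$-derivative of this vector is parallel to the vector itself. Since $\hat\gamma'$ and $\hat h$ are pointwise linearly independent, one computes that $\partial_w\bigl(-(1-w\kappa_\nu)\hat h - w\kappa_t\hat\gamma'\bigr) = \kappa_\nu\hat h - \kappa_t\hat\gamma'$ is parallel to $-(1-w\kappa_\nu)\hat h - w\kappa_t\hat\gamma'$ for all $w$ iff $\kappa_t = 0$ (when $\kappa_t\neq 0$ at some $u_0$, the $\hat\gamma'$-component forces the two to be proportional only if the $\hat h$-components are in the same ratio, which fails identically in $w$).

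I expect no serious obstacle here; the only mild subtlety is being careful about which "developable" criterion to invoke and making sure the argument is phrased along the singular curve $\gamma$ (where the orthonormal frame identity holds) rather than on all of $(\R^2,0)$. I would also note explicitly that the noncylindrical hypothesis $(\kappa_\nu,\kappa_t)\neq(0,0)$ ensures $\nr$ is genuinely a ruled (not degenerate) surface so that the classical developability criterion applies, but it is not needed for the equivalence itself.
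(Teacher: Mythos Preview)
Your first approach is exactly the paper's: use the Frenet-type formula \eqref{eq:frenet} to compute $\det(\hat\gamma',\hat\nu,\hat\nu')=\kappa_t$ (up to the sign of the orthonormal frame), and then invoke the classical developability criterion for ruled surfaces (the paper cites do~Carmo, p.~194). Your alternative via the normal direction is unnecessary for the write-up, but also correct.
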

\begin{proof}
	Let us take an adapted coordinate system $(u,v)$. 
	Then by \eqref{eq:frenet}, we see that 
	\[\det(\hat{\gamma}',\hat{\nu},\hat{\nu}')=\kappa_t\]
	holds along the $u$-axis. 
	Hence we have the assertion (see \cite[Page 194]{docarmo}).
\end{proof}

This implies that when the singular curve of an initial frontal is a line of curvature (\cite[Proposition 3.3]{saji_tera}), 
then $\nr$ is a developable surface. 
Moreover, we have the following.
\begin{cor}
	If $\nr$ is a developable surface, then the singular locus of $\nr$ is 
	$\hat{\gamma}+\hat{\nu}/\kappa_\nu$. 
\end{cor}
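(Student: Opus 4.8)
The plan is to combine Proposition~\ref{prop:develop} with Lemma~\ref{lem:sing-nr} directly, so the argument will be short. First I would observe that since $\nr$ is assumed developable, Proposition~\ref{prop:develop} gives that $\kappa_t$ vanishes identically along $\gamma$. Working under the standing noncylindrical hypothesis $(\kappa_\nu,\kappa_t)\neq(0,0)$ along $\gamma$, the vanishing of $\kappa_t$ forces $\kappa_\nu(u)\neq0$ for every $u$ near $0$; hence $1/\kappa_\nu$ is a well-defined smooth function on a neighborhood of $0$ and the curve $u\mapsto\hat{\gamma}(u)+\hat{\nu}(u)/\kappa_\nu(u)$ makes sense.

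Next I would apply Lemma~\ref{lem:sing-nr}: a point $(u_0,w_0)$ belongs to $S(\nr)$ if and only if $\kappa_t(u_0)=0$ and $w_0=1/\kappa_\nu(u_0)$. Since $\kappa_t\equiv0$, the first condition is automatic, so
\[
S(\nr)=\{(u,1/\kappa_\nu(u))\ |\ u\in(\R,0)\},
\]
a regular curve in the source.

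Finally, the singular locus of $\nr$ is by definition the image of $S(\nr)$ under $\nr$, namely the curve
\[
u\longmapsto \nr\!\left(u,\tfrac{1}{\kappa_\nu(u)}\right)=\hat{\gamma}(u)+\tfrac{1}{\kappa_\nu(u)}\,\hat{\nu}(u),
\]
which is exactly $\hat{\gamma}+\hat{\nu}/\kappa_\nu$, as claimed. I do not expect any real obstacle here beyond this bookkeeping; the only point requiring a word of care is the non-vanishing of $\kappa_\nu$ along $\gamma$, which as noted is guaranteed by the noncylindrical assumption together with $\kappa_t\equiv0$.
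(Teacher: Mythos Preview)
Your proof is correct and follows essentially the same approach as the paper's, which simply says ``By Lemma~\ref{lem:sing-nr} and Proposition~\ref{prop:develop}, the conclusion follows.'' You have merely unpacked those two references and made explicit the point that the noncylindrical hypothesis together with $\kappa_t\equiv 0$ forces $\kappa_\nu\neq 0$, which the paper leaves implicit.
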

\begin{proof}
	By Lemma \ref{lem:sing-nr} and Proposition \ref{prop:develop}, the conclusion follows.
\end{proof}

If $\nr$ is developable, then we can take $\hat{h}$ as a unit normal vector field. 
Thus $\nr$ is a frontal. 
Moreover, since $\nr$ is noncylindrical, $\kappa_\nu\neq0$ holds, 
and hence all singular points of $\nr$ are non-degenerate. 
By \eqref{eq:diff-nr}, $\partial_u$ can be taken as a null vector field $\eta_{\nr}$ of $\nr$. 
Thus we have the following.
\begin{prop}\label{prop:frontness1}
	Assume that the normal ruled surface $\nr$ is developable.
	Then $\nr$ is a front if and only if $\kappa_s\neq0$. 
\end{prop}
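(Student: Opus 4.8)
The plan is to use the standard front criterion: a frontal is a front at a non-degenerate singular point precisely when the null vector field is transverse to the kernel of the pair $(\nr,\hat h)$, equivalently when $\det(d\nr(\eta_{\nr}),\, d\hat h(\eta_{\nr})) \neq 0$ along the singular curve, where $\hat h$ serves as the unit normal to the developable $\nr$. By the discussion preceding the statement, we already know that $\eta_{\nr}=\partial_u$ is a null vector field and that $\hat h$ is a (smooth) unit normal to $\nr$ when $\nr$ is developable. So the front condition reduces to showing that the singular value of $\nr$ together with the derivative of $\hat h$ spans a $2$-plane transversally to the generator direction $\hat\nu$.

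First I would compute $d\nr(\partial_u)$ and $d\hat h(\partial_u)$ restricted to the singular curve $w=1/\kappa_\nu$. From \eqref{eq:diff-nr} we have $\nr_u=(1-w\kappa_\nu)\hat\gamma'-w\kappa_t\hat h$, which on the singular set (using $\kappa_t=0$, since $\nr$ is developable, by Proposition \ref{prop:develop}) and at $w=1/\kappa_\nu$ becomes $\nr_u=0$ — so the singular locus is the curve $c(u)=\hat\gamma(u)+\hat\nu(u)/\kappa_\nu(u)$ as in the preceding Corollary, and I would instead differentiate $c$ with respect to $u$. Using the Frenet-type formula \eqref{eq:frenet} with $\kappa_t=0$, one gets $\hat\nu'=-\kappa_\nu\hat\gamma'$, hence
\[
c' = \hat\gamma' + \frac{\hat\nu'}{\kappa_\nu} - \frac{\kappa_\nu'}{\kappa_\nu^2}\hat\nu
   = \hat\gamma' - \hat\gamma' - \frac{\kappa_\nu'}{\kappa_\nu^2}\hat\nu
   = -\frac{\kappa_\nu'}{\kappa_\nu^2}\hat\nu .
\]
Similarly, $\hat h' = -\kappa_s\hat\gamma' + \kappa_t\hat\nu = -\kappa_s\hat\gamma'$ on the singular curve. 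Now the front condition is that the image curve (here the pair $(c,\hat h)$) is an immersion into $\R^3\times\bS^2$; since $c'$ is parallel to $\hat\nu$ and $\hat h'=-\kappa_s\hat\gamma'$ is (when $\kappa_s\neq0$) parallel to $\hat\gamma'$, which is independent of $\hat\nu$, the pair $(c',\hat h')$ is nonzero exactly when $\kappa_s\neq0$ OR $\kappa_\nu'\neq0$; but I would argue that the genuinely intrinsic front criterion — the one that detects a cuspidal edge versus a non-front singularity — is the non-degeneracy of $\det(d\nr(\eta_{\nr}), d\hat h(\eta_{\nr}))$ transverse to the ruling, and a short computation along the lines of the proof of Lemma \ref{lem:sing-nr} shows this determinant equals (a nonzero multiple of) $\kappa_s$ at the singular point.

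Concretely, I would set up the $2\times 2$ determinant whose nonvanishing characterizes the front property at a non-degenerate singular point of a frontal, evaluate its entries using \eqref{eq:diff-nr}, $\nr_w=\hat\nu$, and $\hat h_u = -\kappa_s\hat\gamma' + \kappa_t\hat h$ together with $\kappa_t\equiv 0$, and simplify using the orthonormality of $\{\hat\gamma',\hat h,\hat\nu\}$; the outcome should be proportional to $\kappa_s$ on the singular set. Then $\nr$ is a front iff this quantity is nonzero, i.e.\ iff $\kappa_s\neq0$. The main obstacle I anticipate is bookkeeping: carefully identifying the correct identifier of singularities and null vector field for $\nr$ (as a map germ on the $(u,w)$-plane), and making sure the front criterion is applied with the right frame $\{\nr_u,\hat h,\hat\nu\}$ rather than mixing in the original frame of $f$; once the setup is right, the Frenet relations \eqref{eq:frenet} with $\kappa_t=0$ collapse everything quickly.
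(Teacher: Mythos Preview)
Your proposal eventually lands on the same computation as the paper --- $\hat h_u=-\kappa_s\hat\gamma'$ (using $\kappa_t\equiv0$), and the front condition is precisely the nonvanishing of this --- but the path you take is longer and contains a real confusion that should be cut.

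The paper's proof is two lines: since $\nr$ is a frontal with unit normal $\hat h$ and $\eta_{\nr}=\partial_u$ is the null vector field, the Legendre lift $(\nr,\hat h)$ is an immersion at a singular point if and only if $d\hat h(\eta_{\nr})\neq0$ (because $\nr_w=\hat\nu\neq0$ while $\hat h_w=0$). By \eqref{eq:frenet}, $\eta_{\nr}\hat h=\hat h'=-\kappa_s\hat\gamma'+\kappa_t\hat\nu=-\kappa_s\hat\gamma'$, which is nonzero exactly when $\kappa_s\neq0$. That is the entire argument.

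Your detour through the image curve $c(u)=\hat\gamma+\hat\nu/\kappa_\nu$ and the observation that $(c',\hat h')\neq0$ iff $\kappa_s\neq0$ \emph{or} $\kappa_\nu'\neq0$ is not the front criterion and is misleading: the front condition concerns the rank of the differential of the Legendre lift $(\nr,\hat h)\colon(\R^2,0)\to\R^3\times\bS^2$, not the immersivity of the composed curve $u\mapsto(c(u),\hat h(u))$. The case $\kappa_s=0$, $\kappa_\nu'\neq0$ gives an immersed image curve but $\nr$ is \emph{not} a front there (indeed this is exactly the cuspidal cross cap case in Theorem~\ref{thm:sing_NR}(3)). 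You correctly sense something is off and retreat to a ``$2\times2$ determinant,'' but no determinant is needed: the criterion is simply $d\hat h(\eta_{\nr})\neq0$. Also, a typo: $\hat h_u=-\kappa_s\hat\gamma'+\kappa_t\hat\nu$, not $+\kappa_t\hat h$.
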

\begin{proof}
	To show this, it is sufficient to check the condition that $\eta_{\nr}\hat{h}\neq0$. 
	By \eqref{eq:frenet}, we have 
	$\eta_{\nr}\hat{h}=-\kappa_s\hat{\gamma}'$. 
	This implies the conclusion.
\end{proof}

\begin{cor}\label{cor:plane}
	If the developable normal surface $\nr$ is a frontal but not a front on $S(f)\times\R$, 
	then the image of $\nr$ is a part of a plane.
\end{cor}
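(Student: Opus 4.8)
The plan is to translate each hypothesis into a condition on the geometric invariants $\kappa_s,\kappa_\nu,\kappa_t$ along the singular curve $\gamma$, and then read off the conclusion directly from the structure equations \eqref{eq:frenet}. First I would fix an adapted coordinate system $(u,v)$, so that $\{\hat\gamma',\hat h,\hat\nu\}$ is the orthonormal frame along $\hat\gamma$ appearing in \eqref{eq:frenet}. The assumption that $\nr$ is developable gives $\kappa_t\equiv0$ along $\gamma$ by Proposition \ref{prop:develop} (and we are in the noncylindrical setting, so $\kappa_\nu$ is nowhere zero). The remaining hypothesis is that $\nr$ is a frontal but fails to be a front at every point of $S(f)\times\R$. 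By Proposition \ref{prop:frontness1}, or rather its proof where $\eta_{\nr}\hat h=-\kappa_s\hat\gamma'$, this forces $\kappa_s=0$ at every $u_0$ for which some ruling point $(u_0,w_0)$ is singular; by Lemma \ref{lem:sing-nr} such a $w_0=1/\kappa_\nu(u_0)$ exists for every $u_0$, so in fact $\kappa_s\equiv0$ along $\gamma$.

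Next I would substitute $\kappa_s=\kappa_t=0$ into the Frenet-type system \eqref{eq:frenet}. The middle row then reads $\hat h'\equiv0$, so $\hat h$ is a constant unit vector; call it $\ba$. Now I would exploit orthonormality of $\{\hat\gamma',\hat h,\hat\nu\}$: for every $u$ we have $\inner{\hat\gamma'(u)}{\ba}=\inner{\hat\gamma'(u)}{\hat h(u)}=0$ and $\inner{\hat\nu(u)}{\ba}=\inner{\hat\nu(u)}{\hat h(u)}=0$. Integrating the first identity shows that $\inner{\hat\gamma(u)}{\ba}$ is a constant $c$. Therefore, from the definition \eqref{eq:maps} of $\nr$,
\[
\inner{\nr(u,w)}{\ba}=\inner{\hat\gamma(u)}{\ba}+w\inner{\hat\nu(u)}{\ba}=c
\]
for all $(u,w)$, i.e.\ the image of $\nr$ is contained in the affine plane $\{\bx\in\R^3 : \inner{\bx}{\ba}=c\}$, which is the assertion.

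The computations here are all routine once the invariants are pinned down, so I do not anticipate a genuine obstacle. The one point that needs a little care is the interpretation of ``not a front on $S(f)\times\R$'' and the step establishing that it yields $\kappa_s\equiv0$ (and not merely $\kappa_s(0)=0$), since it is precisely this \emph{global} vanishing of $\kappa_s$, together with $\kappa_t\equiv0$, that makes $\hat h$ a constant vector and hence produces an actual plane rather than a surface which is merely tangent to a plane at one point.
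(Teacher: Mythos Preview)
Your proposal is correct and follows the same route as the paper: from the developability hypothesis and Proposition~\ref{prop:frontness1} you extract $\kappa_t\equiv0$ and $\kappa_s\equiv0$, then read off $\hat h'\equiv0$ from \eqref{eq:frenet} and conclude planarity. The paper's proof is terser---it simply says that $\hat h$ is constant and ``thus we have the assertion''---whereas you spell out both the passage from the global non-front hypothesis to $\kappa_s\equiv0$ and the final step $\inner{\nr(u,w)}{\hat h}=\text{const}$, which is a welcome addition of detail but not a different argument.
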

\begin{proof}
	Let us take an adapted coordinate system $(u,v)$. 
	Then by Proposition \ref{prop:frontness1}, 
	if $\nr$ is a frontal but not a front, then $\hat{h}'$ vanishes identically along the $u$-axis. 
	This implies that $\hat{h}$ is a constant vector. 
	Thus we have the assertion.
\end{proof}
By this corollary, $\nr$ does not have a $5/2$-cuspidal edge. 
When $\kappa_s$ does not vanish identically, there are possibilities for $\nr$ to be a front or not. 
For the case that $\nr$ is developable, we have the following characterizations of singularities. 
\begin{thm}\label{thm:sing_NR}
	Let $\nr$ be a normal ruled surface as in \eqref{eq:maps} of a frontal $f$ and $q=(u_0,w_0)$ a singular point of $\nr$. 
	Suppose that $\nr$ is non-cylindrical and developable. 
	Then 
	\begin{enumerate}
		\item $\nr$ has a cuspidal edge at $q$ if and only if $\kappa_s(u_0)\kappa_\nu'(u_0)\neq0$;
		\item $\nr$ has a swallowtail at $q$ if and only if $\kappa_s(u_0)\neq0$, $\kappa_\nu'(u_0)=0$ and $\kappa_\nu''(u_0)\neq0$;
		\item $\nr$ has a cuspidal cross cap at $q$ if and only if $\kappa_s(u_0)=0$ and $\kappa_s'(u_0)\kappa_\nu'(u_0)\neq0$;
		\item $\nr$ has a cuspidal $S_1^+$ singularity at $q$ if and only if $\kappa_s(u_0)=\kappa_s'(u_0)=0$ and $\kappa_s''(u_0)\kappa_\nu'(u_0)\neq0$.
	\end{enumerate}
	Here a swallowtail is defined as a germ $\cal{A}$-equivalent to $(u,v)\mapsto(u,4v^3+2uv,3v^4+uv^2)$ at the origin (see Figure \ref{fig:sw}). 
	\begin{figure}[htbp]
	\centering
			\includegraphics[width=2.5cm]{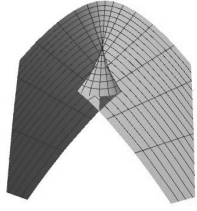}
			\caption{The image of a standard swallowtail.}
		\label{fig:sw}
	\end{figure}
\end{thm}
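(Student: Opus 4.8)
The plan is to recast each of the four equivalences as an application of a standard recognition criterion for the singularity concerned, applied to $\nr$ with the data already isolated above. Since $\nr$ is developable, Proposition \ref{prop:develop} gives $\kappa_t\equiv 0$ along $\gamma$, so \eqref{eq:diff-nr} becomes $\nr_u=(1-w\kappa_\nu)\hat\gamma'$, $\nr_w=\hat\nu$. Hence $\nu_{\nr}(u,w):=\hat h(u)$ is a smooth unit normal of $\nr$; the function $\Lambda(u,w):=1-w\kappa_\nu(u)$ is an identifier of singularities (because $\lambda_{\nr}=\det(\nr_u,\nr_w,\hat h)=\pm(1-w\kappa_\nu)$ by orthonormality of $\{\hat\gamma',\hat h,\hat\nu\}$); and $\eta_{\nr}=\partial_u$ is a null vector field. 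Non-cylindricity gives $\kappa_\nu\neq0$, so $\Lambda_w=-\kappa_\nu\neq0$ and every singular point of $\nr$ is non-degenerate; the singular curve may be taken to be $\gamma_{\nr}(u)=(u,1/\kappa_\nu(u))$, with image $\sigma(u):=\nr(\gamma_{\nr}(u))=\hat\gamma(u)+\hat\nu(u)/\kappa_\nu(u)$, for which \eqref{eq:frenet} yields $\sigma'=-(\kappa_\nu'/\kappa_\nu^2)\hat\nu$.

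For (1) and (2), Proposition \ref{prop:frontness1} tells us that $\nr$ is a front at $q$ precisely when $\kappa_s(u_0)\neq0$. I then invoke the classical criteria for fronts (see \cite{suy_front}): at a non-degenerate singular point, a front has a cuspidal edge iff $\eta_{\nr}\Lambda\neq0$ there, and a swallowtail iff $\eta_{\nr}\Lambda=0$ there while the restriction of $\eta_{\nr}\Lambda$ to $\gamma_{\nr}$ has nonzero derivative. Since $\eta_{\nr}\Lambda=-w\kappa_\nu'$, along $\gamma_{\nr}$ it equals $-\kappa_\nu'(u)/\kappa_\nu(u)$, which vanishes at $u_0$ iff $\kappa_\nu'(u_0)=0$ and whose derivative at such a point is $-\kappa_\nu''(u_0)/\kappa_\nu(u_0)$. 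Combining with $\kappa_s(u_0)\neq0$ reproduces exactly (1) and (2); in particular $q$ is a singular point of the first kind iff $\kappa_\nu'(u_0)\neq0$.

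For (3) and (4), $\nr$ is not a front at $q$, so $\kappa_s(u_0)=0$ by Proposition \ref{prop:frontness1}; moreover a cuspidal cross cap and a cuspidal $S_1^{\pm}$ singularity are singular points of the first kind, which by the previous paragraph forces $\kappa_\nu'(u_0)\neq0$, so $\sigma$ is a regular curve near $u_0$. The recognition criteria for these singularities (see \cite{fsuy,saji_S1}) are phrased through the function $\psi_{\nr}(u)=\det(\sigma'(u),\nu_{\nr},d\nu_{\nr}(\eta_{\nr}))=\det(\sigma'(u),\hat h(u),\hat h'(u))$: a first-kind singular point is a cuspidal cross cap iff $\psi_{\nr}(u_0)=0$ and $\psi_{\nr}'(u_0)\neq0$, and a cuspidal $S_1^{\pm}$ singularity iff $\psi_{\nr}(u_0)=\psi_{\nr}'(u_0)=0$ and $\psi_{\nr}''(u_0)\neq0$, together with a sign condition that distinguishes the two types. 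From \eqref{eq:frenet}, $\hat h'=-\kappa_s\hat\gamma'+\kappa_t\hat\nu=-\kappa_s\hat\gamma'$, hence $\psi_{\nr}(u)=c\,\kappa_s(u)\kappa_\nu'(u)/\kappa_\nu(u)^2$ for a constant $c=\pm1$. Since $\kappa_\nu'(u_0)\neq0$ here, differentiating and using $\kappa_s(u_0)=0$ gives $\psi_{\nr}(u_0)=0$ and $\psi_{\nr}'(u_0)=c\,\kappa_s'(u_0)\kappa_\nu'(u_0)/\kappa_\nu(u_0)^2$, and if in addition $\kappa_s'(u_0)=0$ then $\psi_{\nr}''(u_0)=c\,\kappa_s''(u_0)\kappa_\nu'(u_0)/\kappa_\nu(u_0)^2$. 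This yields the stated conditions in (3) and in (4), apart from the sign in (4).

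That sign is the main obstacle: one must show that $\nr$ is always of type $S_1^+$ and never $S_1^-$. The plan is to evaluate the relevant sign invariant (from the criterion in \cite{saji_S1}) directly: when $\kappa_s(u_0)=\kappa_s'(u_0)=0$, the relation $\hat h'=-\kappa_s\hat\gamma'$ together with \eqref{eq:frenet} gives $\hat h'(u_0)=\hat h''(u_0)=0$ and $\hat h'''(u_0)=-\kappa_s''(u_0)\hat\gamma'(u_0)$, while $\sigma'(u_0)=-(\kappa_\nu'(u_0)/\kappa_\nu(u_0)^2)\hat\nu(u_0)$; substituting these into the sign expression, the leading contribution assembles into a perfect square (up to its overall normalization), so the type is necessarily $S_1^+$. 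Alternatively, in this regime $\nr$ is the tangent developable of the regular, non-inflectional curve $\sigma$, whose torsion is a nowhere-vanishing multiple of $\kappa_s$, and one may instead quote the classification of singularities of tangent developables, in which a second-order zero of the torsion always produces an $S_1^+$ singularity. The remaining steps — checking that the auxiliary non-degeneracy hypotheses of each criterion hold automatically here, and that $q$ is of the first or the second kind exactly as each criterion requires — are routine given the formulas above.
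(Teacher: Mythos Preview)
Your argument is correct and, for (1)--(3), coincides with the paper's: the same identifier $\Lambda=1-w\kappa_\nu$, the same null vector field $\eta_{\nr}=\partial_u$, the same $\psi_{\nr}$, and the same criteria from \cite{suy_ak,fsuy} are used, with essentially identical computations.

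For (4) there is a real difference in execution. Your route~(a) is the paper's route, but the paper actually carries out the sign check rather than asserting that the invariant ``assembles into a perfect square'': it builds the adjusted null vector field $\wtil\eta_{\nr}=\partial_u+\kappa_\nu'(u_0)(u-u_0)^2\partial_w$ prescribed by the $S_1$ criterion of \cite[Theorem~3.2]{saji_S1}, computes $\wtil\eta_{\nr}^k\nr(q)$ for $k=2,\dots,5$ via \eqref{eq:frenet}, and obtains the explicit product
\[
AB=\dfrac{12\,\kappa_\nu'(u_0)^4\,\kappa_s''(u_0)^2}{\kappa_\nu(u_0)^6}>0,
\]
which is the perfect square you anticipate. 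Your outline skips precisely this step (and the verification that $\wtil\eta_{\nr}^2\nr(q)$ and $\hat\beta'(q)$ are independent, i.e.\ $\kappa_\nu'(u_0)\neq0$), so as written it is a plan rather than a proof of the $+$ sign. Your route~(b), recognising $\nr$ near $q$ as the tangent developable of the striction curve $\sigma$ (regular with nonzero curvature, torsion a nonvanishing multiple of $\kappa_s$) and quoting the classification that a second-order torsion zero yields a cuspidal $S_1^{+}$, is a legitimate and shorter alternative; the paper does not use it in the proof but acknowledges exactly this mechanism in the remark immediately after the theorem (citing \cite{ishikawa,saji_S1}). Either route is fine, but if you keep (a) you should perform the $AB$ computation, and if you keep (b) you should state precisely which classification result you invoke and verify its hypotheses (nonvanishing curvature of $\sigma$).
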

\begin{proof}
	By Proposition \ref{prop:frontness1}, $\nr$ at $q$ is a front if and only if $\kappa_s(u_0)\neq0$. 
	Moreover, an identifier of singularities $\Lambda$ of $\nr$ is given as $\Lambda=1-w\kappa_\nu(u)$, 
	and $\eta_{\nr}=\partial_u$ gives a null vector field of $\nr$. 
	Thus we have 
	\[\eta_{\nr}\Lambda(q)=-w_0\kappa_\nu'(u_0),\quad \eta_{\nr}\eta_{\nr}\Lambda(q)=-w_0\kappa_\nu''(u_0).\]
	Hence we have the first two assertions by the criteria for cuspidal edges and swallowtails (\cite[Corollary 2.5]{suy_ak}). 
	
	We next consider the case that $q$ is a non-front singular point of $\nr$, 
	that is, $q$ is a singular point of the first kind and $\nr$ is not a front at $q$. 
	Since $\nr$ is non-cylindrical, $\kappa_\nu\neq0$ holds, and hence the singular curve $\beta$ of $\nr$ 
	can be parametrized as $\beta(u)=(u,1/\kappa_\nu(u))$. 
	We set $\hat{\beta}(u)=\nr\circ\beta(u)$ and $\psi(u)=\det(\hat{\beta}'(u),\hat{h}(u),\eta_{\nr}\hat{h}(u))$. 
	By \eqref{eq:frenet}, we have 
	\[\psi(u)=-\frac{\kappa_s(u)\kappa_\nu'(u)}{\kappa_\nu(u)^2}.\]
	When $\nr$ is not a front at $q$, then $\psi(u_0)=0$. 
	Moreover, $\psi'(u_0)$ is calculated as 
	\[\psi'(u_0)=-\frac{\kappa_s'(u_0)\kappa_\nu'(u_0)}{\kappa_\nu(u_0)^2}.\]
	By the criterion for a cuspidal cross cap (\cite[Corollary 1.5]{fsuy}), we have the third assertion. 
	Finally, we assume that $\nr$ at $q$ is not a cuspidal cross cap. 
	Then $\psi(u_0)=\psi'(u_0)=0$ holds. 
	Since $q=(u_0,w_0)$ is of the first kind, $\kappa_\nu'(u_0)\neq0$ and $\kappa_s'(u_0)=0$ hold.
	Under these situations, we calculate $\psi''(u_0)$. 
	By a direct computation with \eqref{eq:frenet}, 
	we have 
	\[\psi''(u_0)=-\frac{\kappa_s''(u_0)\kappa_\nu'(u_0)}{\kappa_\nu(u_0)^2}{(=B)}.\]
	On the other hand, there exists a null vector field $\wtil{\eta}_{\nr}$ such that 
	\[\inner{\hat{\beta}'(u_0)}{\wtil{\eta}_{\nr}^2\nr(q)}
	=\inner{\hat{\beta}'(u_0)}{\wtil{\eta}_{\nr}^3\nr(q)}=0,\] 
	where $\wtil{\eta}_{\nr}^{k}\nr$ 
	means the $k$-time directional derivative of $\nr$ in the direction $\wtil{\eta}_{\nr}$ (\cite{os,hs,hks}). 
	We note that $\wtil{\eta}_{\nr}^2\nr\neq0$ at $q$. 
	By the similar calculations in the proof of \cite[Lemma 2.4]{saji_tera}, we have 
	\[\wtil{\eta}_{\nr}=\partial_u+\kappa_\nu'(u_0)(u-u_0)^2\partial_w.\]
	In particular, $\eta_{\nr}=\wtil{\eta}_{\nr}$ holds at $q$.
	Using this vector field, we have 
	\[\wtil{\eta}_{\nr}^2\nr=-\frac{\kappa_\nu'}{\kappa_\nu}\hat{\gamma}',\quad 
	\wtil{\eta}_{\nr}^3\nr=-\frac{\kappa_\nu''}{\kappa_\nu}\hat{\gamma}'\]
	at $q$, and hence it holds that $\wtil{\eta}_{\nr}^3\nr(q)=(\kappa_\nu''(u_0)/\kappa_\nu'(u_0))\wtil{\eta}_{\nr}^2\nr(q)$. 
	To characterize cuspidal $S_1$ singularities, we set $C=\kappa_\nu''(u_0)/\kappa_\nu'(u_0)$. 
	The cross product of $\hat{\beta}'\times\wtil{\eta}_{\nr}^2\nr$ can be written as
	\[\hat{\beta}'\times\wtil{\eta}_{\nr}^2\nr=\frac{(\kappa_\nu')^2}{\kappa_\nu^3}\hat{\nu}\times\hat{\gamma}'\]
	at $q$. 
	We calculate the fourth and fifth order directional derivatives of $\nr$ in the direction $\wtil{\eta}_{\nr}$. 
	By straightforward computations, we have 
	\[\wtil{\eta}_{\nr}^4\nr\equiv0,\quad 
	\wtil{\eta}_{\nr}^5\nr\equiv-\dfrac{4\kappa_\nu'\kappa_s''}{\kappa_\nu}\hat{h} \mod \langle{\hat{\gamma}',\hat{\nu}}\rangle_{\R}\]
	at $q$ since $\kappa_s=\kappa_s'=0$ at $u_0$. 
	Here for $a,b,x,y\in\R^3$, $a\equiv b\mod\langle{x,y}\rangle_\R$ means that 
	there exist $c_1,c_2\in\R$ such that $a-b=c_1x+c_2y$ holds. 
	Thus we obtain
	\[A=\det(\hat{\beta}',\wtil{\eta}_{\nr}^2\nr,3\wtil{\eta}_{\nr}^5\nr-10C\wtil{\eta}_{\nr}^4\nr)(q)=
	-\dfrac{12\kappa_\nu'(u_0)^3\kappa_s''(u_0)}{\kappa_\nu(u_0)^4.}\]
	Hence the product of $A$ and $B$ is 
	\[AB=\dfrac{12\kappa_\nu'(u_0)^4\kappa_s''(u_0)^2}{\kappa_\nu(u_0)^6}>0.\]
	Therefore we obtain the conclusion by the criterion \cite[Theorem 3.2]{saji_S1}.
\end{proof}
We note that a developable normal ruled surface does not admit cuspidal $S_1^-$ singularities.
This comes from the general theory \cite{ishikawa} (see also \cite{saji_S1}). 
We note that duality of singularities on flat surfaces which contain developable surfaces are studied by Honda \cite{honda}. 

We turn to our consideration to nondevelopable case. 
It is known that generic singularities of such surfaces is a {\it cross cap} (or a {\it Whitney umbrella}) (see \cite{it_ruled}), 
which is defined as a map germ $\cal{A}$-equivalent to $(u,v)\mapsto(u,uv,v^2)$ at the origin. 
Moreover, {\it $S_1^{\pm}$ singularities} (or {\it Chen Matumoto Mond $\pm$ singularities}) 
defined by map germs $\cal{A}$-equivalent to $(u,v)\mapsto(u,v^2,v(u^2\pm v^2))$ at the origin are known as 
the generic singularities of one-parameter families of $2$-manifolds in $\R^3$ (\cite{cm,mond}). 
\begin{figure}[htbp]
	\centering
		\begin{tabular}{c}
			
			\begin{minipage}{0.3\hsize}
				\begin{center}
					\includegraphics[clip, width=3cm]{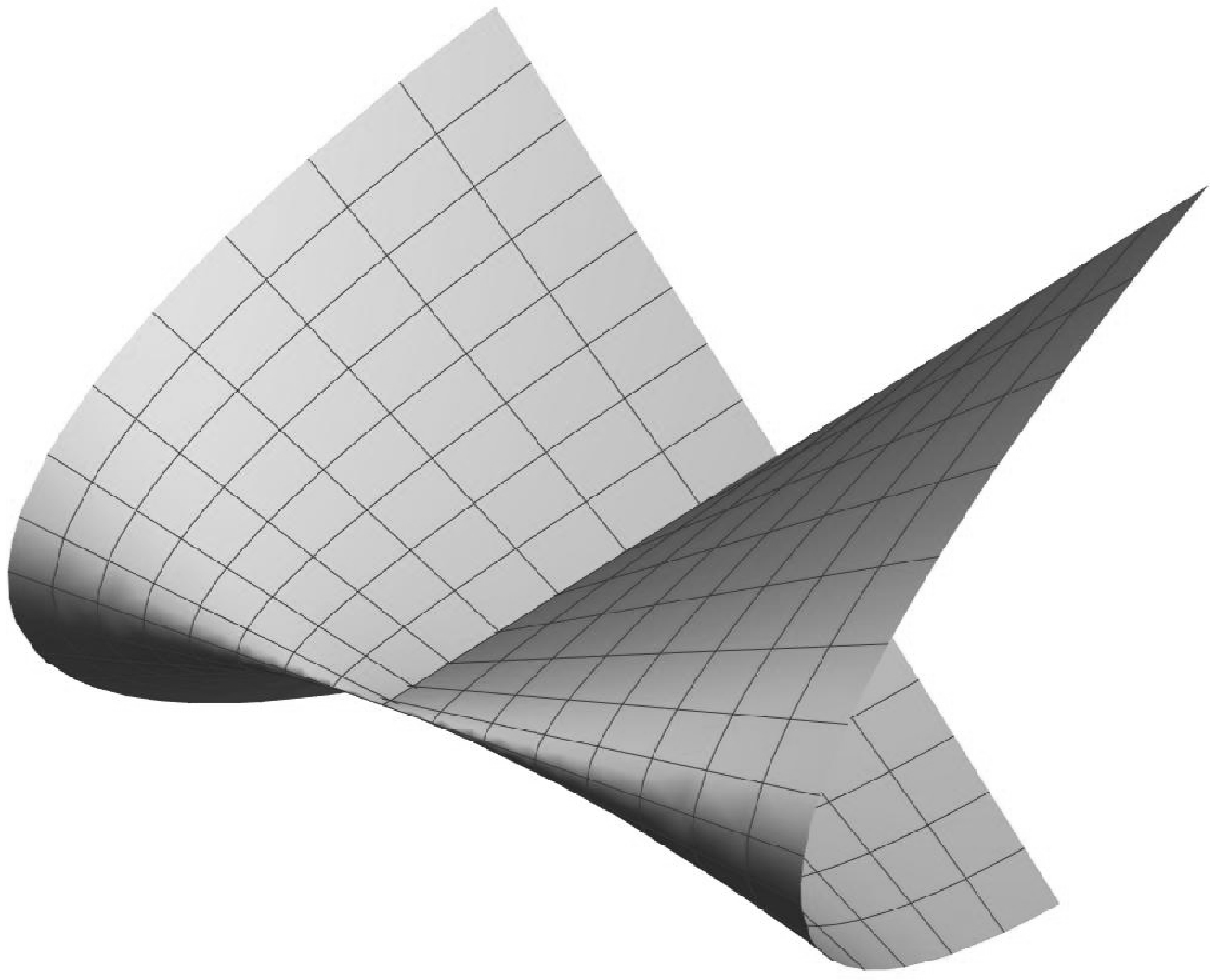}
				\end{center}
			\end{minipage}
			
			\begin{minipage}{0.3\hsize}
				\begin{center}
					\includegraphics[clip, width=3cm]{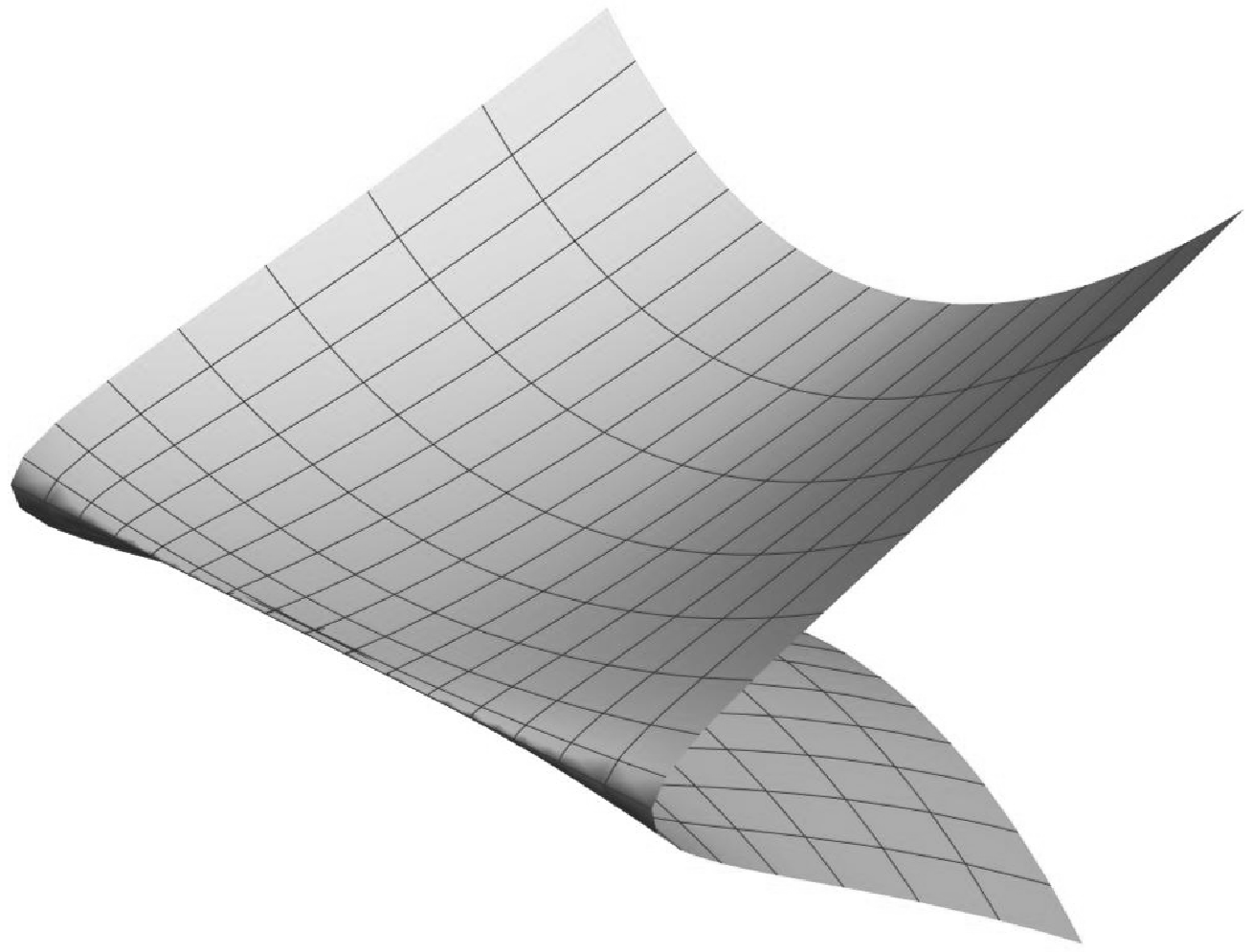}
				\end{center}
			\end{minipage}
			
			\begin{minipage}{0.3\hsize}
				\begin{center}
					\includegraphics[clip, width=3cm]{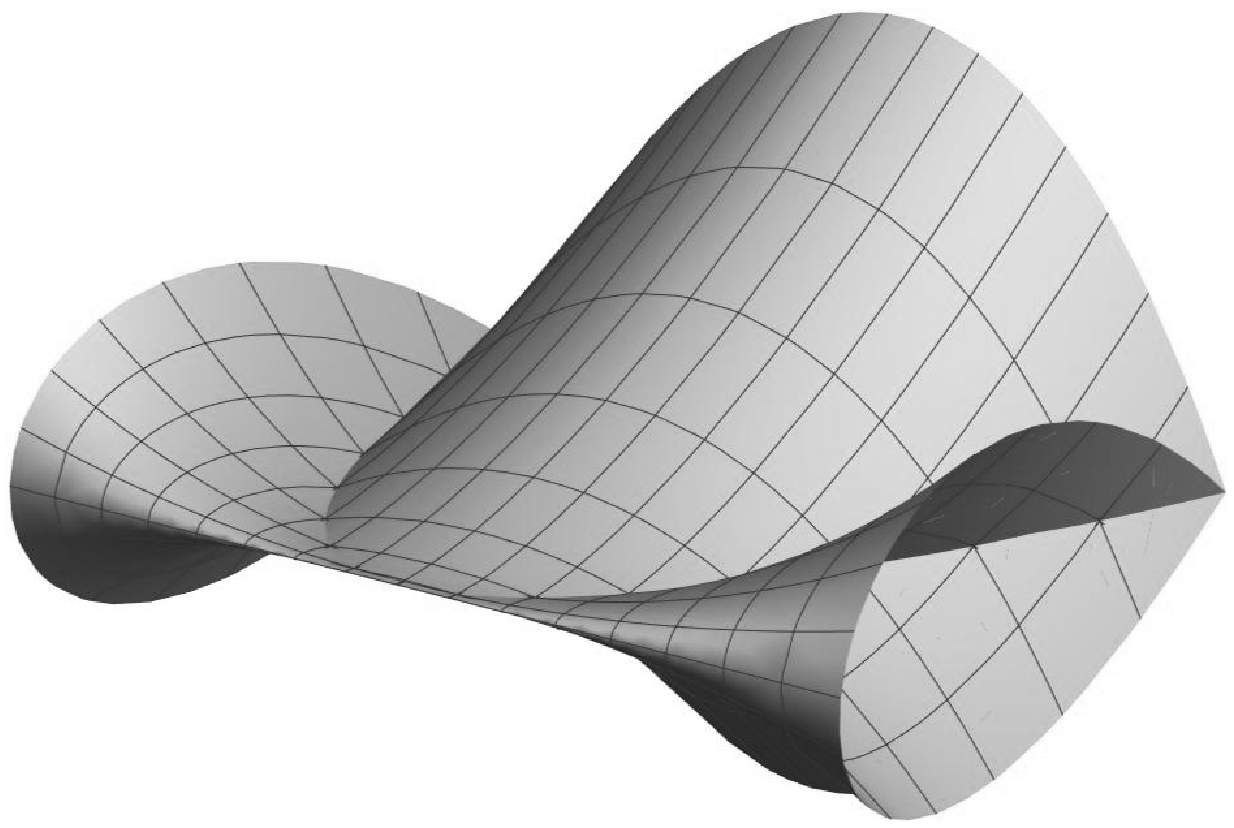}
				\end{center}
			\end{minipage}
		\end{tabular}
		\caption{The images of a cross cap (left), an $S_1^+$ singularity (middle) and an $S_1^{-}$ singularity (right).}
		\label{fig:singular2}
\end{figure}
We give characterization of these singularities on a noncylindrical nondevelopable normal ruled surface 
in terms of geometric invariants of the initial frontal surface by using the criteria obtained in \cite{saji_S1}. 

Let $f\colon(\R^2,0)\to(\R^3,0)$ be a frontal and $0$ a pure-frontal singular point of $f$. 
Let $\nr$ be the normal ruled surface on $(S(f)\times\R;u,w)$ as in \eqref{eq:maps}{.}
Assume that $\nr$ is noncylindrical and nondevelopable. 
Then the set of singular points of $\nr$ is 
\begin{equation}\label{eq:singset_nr}
	S(\nr)=\{(u_0,w_0)\in S(f)\times\R\ |\ \kappa_t(u_0)=0,\ 1-w_0\kappa_\nu(u_0)=0\}
\end{equation}
by Lemma \ref{lem:sing-nr}. 
\begin{thm}\label{thm:nondev-NR}
	Let $f\colon(\R^2,0)\to(\R^3,0)$ be a frontal with pure-frontal singular point $0$. 
	Suppose that a normal ruled surface $\nr$ as in \eqref{eq:maps} of $f$ is noncylindrical and nondevelopable. 
	Then we have the following.
	\begin{enumerate}
		\item $\nr$ has a cross cap at $(u_0,w_0)\in S(\nr)$ if and only if $\kappa_t'(u_0)\neq0$.
		\item $\nr$ has a $S_1^-$ singularity at $(u_0,w_0)\in S(\nr)$ if and only if $\kappa_t'(u_0)=0$ and 
	$\kappa_t''(u_0)(2\kappa_s(u_0)\kappa_\nu'(u_0)+\kappa_t''(u_0))<0$.
		\item $\nr$ has a $S_1^+$ singularity at $(u_0,w_0)\in S(\nr)$ if and only if $\kappa_t'(u_0)=0$, $\kappa_\nu'(u_0)\neq0$ and 
		$\kappa_t''(u_0)(2\kappa_s(u_0)\kappa_\nu'(u_0)+\kappa_t''(u_0))>0$.
	\end{enumerate}
\end{thm}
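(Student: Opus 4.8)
The plan is to follow the strategy of the proof of Theorem \ref{thm:sing_NR}: compute the jets of $\nr$ at the singular point from the Frenet-type formula \eqref{eq:frenet} and feed them into the recognition criteria of \cite{saji_S1}. Fix an adapted coordinate system $(u,v)$ of $f$ and let $q=(u_0,w_0)\in S(\nr)$, so that $\kappa_t(u_0)=0$ and $w_0\kappa_\nu(u_0)=1$ by Lemma \ref{lem:sing-nr}, with $\kappa_\nu(u_0)\neq0$ since $\nr$ is noncylindrical. From \eqref{eq:diff-nr} one has $\nr_u(q)=\0$ and $\nr_w(q)=\hat{\nu}$, so $\partial_u$ is a null direction of $\nr$ at $q$ and $\nr_w(q)\neq\0$; differentiating \eqref{eq:diff-nr} and using \eqref{eq:frenet} (with all invariants and the frame evaluated at $u_0$) gives
\[
\nr_{uw}(q)=-\kappa_\nu\hat{\gamma}',\qquad
\nr_{uu}(q)=-\frac{\kappa_\nu'}{\kappa_\nu}\hat{\gamma}'-\frac{\kappa_t'}{\kappa_\nu}\hat{h}.
\]
For (1) I would apply the criterion for the cross cap: since $\partial_u$ is null and $\nr_w(q)\neq\0$, the surface $\nr$ has a cross cap at $q$ if and only if $\nr_w(q),\nr_{uw}(q),\nr_{uu}(q)$ are linearly independent, and as $\{\hat{\gamma}',\hat{h},\hat{\nu}\}$ is orthonormal,
\[
\det\bigl(\nr_w(q),\nr_{uw}(q),\nr_{uu}(q)\bigr)=\kappa_t'(u_0)\,\det(\hat{\nu},\hat{\gamma}',\hat{h})
\]
is a nonzero constant multiple of $\kappa_t'(u_0)$; this proves (1).

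For (2) and (3) I would assume $\kappa_t'(u_0)=0$, so that $\nr_{uu}(q)=-(\kappa_\nu'/\kappa_\nu)\hat{\gamma}'$ is parallel to $\nr_{uw}(q)$ and $q$ is not a cross cap; writing $a=u-u_0$, $b=w-w_0$, the entire quadratic part of $\nr$ at $q$ lies along $\hat{\gamma}'$ and equals $\bigl(-\tfrac{\kappa_\nu'}{2\kappa_\nu}a^2-\kappa_\nu ab\bigr)\hat{\gamma}'$. An $\cal{A}$-change of coordinates on the target can add an arbitrary multiple of $b^2$ to this quadratic form, so it can be completed to a perfect square exactly when $\kappa_\nu'(u_0)\neq0$, in which case $\nr$ at $q$ is brought into the normal form $(U,V)\mapsto(U,V^2,\phi(U,V))$ with $V$ the null coordinate (when $\kappa_\nu'(u_0)=0$ the $2$-jet is more degenerate). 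This is why $\kappa_\nu'(u_0)\neq0$ is required in (3), while it is not needed in (2) because the sign condition there already forces it. Using the third-order data (again from \eqref{eq:frenet})
\begin{align*}
\nr_{uuw}(q)&=-\kappa_\nu'\hat{\gamma}'-\kappa_\nu\kappa_s\hat{h}-\kappa_\nu^2\hat{\nu},\\
\nr_{uuu}(q)&=-\frac{\kappa_\nu''}{\kappa_\nu}\hat{\gamma}'-\frac{2\kappa_s\kappa_\nu'+\kappa_t''}{\kappa_\nu}\hat{h}-2\kappa_\nu'\hat{\nu},
\end{align*}
and tracking how the coordinate changes leading to the normal form act on the $\hat{h}$-component of $\nr$ (which is precisely what the $S_1^{\pm}$-criterion of \cite{saji_S1} does), the odd part of the $3$-jet of $\phi$ takes the form $V\,(A\,V^2+B\,U^2)$ with $A$ a nonzero scalar multiple of $2\kappa_s\kappa_\nu'+\kappa_t''$ and $B$ a nonzero scalar multiple of $\kappa_t''$, the two scalars having the same sign. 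Hence $\nr$ at $q$ is an $S_1^{\pm}$ singularity precisely when $AB\neq0$, of type $S_1^{+}$ when $AB>0$ and of type $S_1^{-}$ when $AB<0$; since $AB$ is a positive multiple of $\kappa_t''(u_0)\bigl(2\kappa_s(u_0)\kappa_\nu'(u_0)+\kappa_t''(u_0)\bigr)$, this yields (2) and (3). (Consistently with the preceding remark, when $\kappa_\nu'(u_0)=0$ this product reduces to $\kappa_t''(u_0)^2\geq0$.)

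The main obstacle is this last step: applying the $S_1^{\pm}$-recognition criterion of \cite{saji_S1}, which entails first normalizing the $2$-jet of $\nr$ at $q$ (the point at which the hypothesis $\kappa_\nu'(u_0)\neq0$ enters) and then carefully following how the target completion-of-the-square, a parametrized Morse normalization of the $V^2$-term, and the removal of the even-in-$V$ part of $\phi$ act on the third-order jet, so that the combination $2\kappa_s\kappa_\nu'+\kappa_t''$ together with the separate factor $\kappa_t''$ emerge with the correct signs. As in the proof of Theorem \ref{thm:sing_NR}, it may be cleaner to replace $\partial_u$ by a suitable modified null vector field $\wtil{\eta}_{\nr}$ before invoking the criterion; the evaluations of $\nr_{uw}(q)$, $\nr_{uu}(q)$, $\nr_{uuw}(q)$ and $\nr_{uuu}(q)$ from \eqref{eq:frenet} are otherwise routine.
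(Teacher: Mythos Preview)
Your treatment of (1) is correct and is effectively the same as the paper's: the paper defines $\phi=\det(\nr_w,\nr_u,\nr_{uu})$ on a neighborhood and invokes the Whitney/Saji criterion $\phi(q)=0$, $\phi_w(q)\neq0$; since $\nr_u(q)=0$, one has $\phi_w(q)=\det(\nr_w,\nr_{uw},\nr_{uu})(q)$, which is exactly the determinant you computed.

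For (2) and (3), however, you are taking a substantially harder route than necessary, and you do not complete it. The paper avoids any direct normalization of the $2$- and $3$-jets of $\nr$. Instead it keeps the same scalar function $\phi=\det(\nr_w,\nr_u,\nr_{uu})$, now regarded as a function of $(u,w)$ on a full neighborhood of $q$, and applies \cite[Theorem 2.2]{saji_S1}: the germ is $S_1^{\pm}$ if and only if $\phi$ has a non-degenerate critical point at $q$ of the appropriate index, together with the side condition that $\nr_w(q)$ and $\nr_{uu}(q)$ are linearly independent. Using \eqref{eq:frenet} one writes $\phi$ explicitly as a quadratic polynomial in $w$ with coefficients depending on $u$, checks $\phi_u(q)=\phi_w(q)=0$ when $\kappa_t'(u_0)=0$, and computes the Hessian entries
\[
\phi_{uu}(q)=\frac{\kappa_\nu'}{\kappa_\nu^2}\bigl(\kappa_t''+2\kappa_s\kappa_\nu'\bigr),\qquad
\phi_{uw}(q)=\kappa_t''+2\kappa_s\kappa_\nu',\qquad
\phi_{ww}(q)=2\kappa_s\kappa_\nu^2,
\]
so that $\det\hess\phi(q)=-\kappa_t''\bigl(2\kappa_s\kappa_\nu'+\kappa_t''\bigr)$. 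The linear independence of $\nr_w(q)$ and $\nr_{uu}(q)$ is precisely $\kappa_\nu'(u_0)\neq0$. This yields (2) and (3) immediately, with no need to track how successive $\cal{A}$-changes act on cubic terms.

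The gap in your argument is exactly the step you flag as ``the main obstacle'': you assert that after normalization the cubic part becomes $V(AV^2+BU^2)$ with $A$ and $B$ nonzero same-sign multiples of $2\kappa_s\kappa_\nu'+\kappa_t''$ and $\kappa_t''$, but you never verify this, and it is not obvious a priori that the mixing induced by the target completion-of-the-square and the source Morse normalization produces precisely this combination (with matching signs). The Hessian criterion from \cite{saji_S1} packages all of that bookkeeping; use it, and the proof of (2)--(3) becomes a short computation.
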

\begin{proof}
	Let us take an adapted coordinate system $(u,v)$ for $f$. 
	Then $S(f)=\{v=0\}$ holds. 
	Under this setting, we show the assertions. 
	Putting $\xi_{\nr}=\partial_w$ and $\eta_{\nr}=\partial_u$ on $S(f)\times\R$, 
	the pair $(\xi_{\nr},\eta_{\nr})$ satisfies that $d\nr(\eta_{\nr})(=\eta_{\nr}\nr)=0$ 
	and $\xi_{\nr}$, $\eta_{\nr}$ are linearly independent at $q\in S(\nr)$. 
	Using this pair $(\xi_{\nr},\eta_{\nr})$, we define a function $\phi\colon S(f)\times\R\to\R$ by 
	$\phi=\det(\xi_{\nr}\nr,\eta_{\nr}\nr,\eta_{\nr}\eta_{\nr}\nr)=\det(\nr_w,\nr_u,\nr_{uu})$. 
	Since $\nr_w, \nr_u$ and $\nr_{uu}$ can be calculated as
	\[
	\nr_w=\hat\nu, \ \ \ \nr_u=(1-w\kappa_\nu)\hat\gamma'w \kappa_t \hat{h}\
	\]
	\[
	\nr_{uu}=w(\kappa_s\kappa_t-\kappa_\nu')\hat\gamma+(\kappa_s(1-w\kappa_\nu)-w \kappa_t')\hat{h}+*\hat\nu
	\]
	by \eqref{eq:diff-nr}, where $*$ is a some function of $u$ and $w$, we have
\[\phi=w^2\left(\kappa_s\left(\kappa_\nu^2+\kappa_t^2\right)+
\kappa_\nu\kappa_t'-\kappa_\nu'\kappa_t\right)-w\left(2\kappa_s\kappa_\nu+\kappa_t'\right)+\kappa_s\]
	In this case, $\nr$ at $q=(u_0,w_0)$ is a cross cap if and only if $\phi(q)=0$ and $\xi_{\nr}\phi(q)=\phi_w(q)\neq0$ 
	(see \cite{whitney} and \cite[Remark 2.3]{saji_S1}). 
	Thus we have the first assertion. 
	
	We next consider the case that $\nr$ does not have a cross cap at $q$, namely $\kappa_t'(u_0)=0$. 
	The first order differentials of $\phi$ satisfy $\phi_u(q)=0$ and $\phi_w(q)=\kappa_t'(u_0)$. 
	Thus $\phi$ has a critical point at $q$. 
	We consider the Hessian of $\phi$ at $q$. 
	By straightforward calculations, we have 
	\[\phi_{uu}=\frac{\kappa_\nu'}{\kappa_\nu^2}(\kappa_t''+2\kappa_s\kappa_\nu'),\quad 
	\phi_{uw}=\kappa_t''+2\kappa_s\kappa_\nu',\quad 
	\phi_{ww}=2\kappa_s\kappa_\nu^2\]
	at $q$. 
	Therefore the Hessian $\phi_{uu}\phi_{ww}-\phi_{uw}^2$ at $q$ is 
\[\phi_{uu}(q)\phi_{ww}(q)-\phi_{uw}(q)^2=-\kappa_t''(u_0)(2\kappa_s(u_0)\kappa_\nu'(u_0)+\kappa_t''(u_0)).\]
	Moreover, when $\kappa_t'(u_0)=0$, then $\nr_{uu}=-(\kappa_\nu'(u_0)/\kappa_\nu(u_0))\hat{\gamma}'(u_0)$. 
	Thus $\nr_w(q)$ and $\nr_{uu}(q)$ are linearly independent if and only if $\kappa_\nu'(u_0)\neq0$, 
	and hence we have the assertion for $S_1$ singularities by the criteria for these singularities \cite[Theorem 2.2]{saji_S1}. 
\end{proof}

 For characterizations of singularities of $\nr$, we only need a singular point to be of the first kind. 
Thus same results hold for $\nr$ of a frontal with such a singular point.

\section{Focal surfaces of frontal surfaces with pure-frontal singular points}\label{sec:focal}
We investigate singularities and certain geometric properties of $C_j$ 
as in \eqref{eq:maps} of a frontal surface with pure-frontal singularities.

\subsection{Singularities of $C_j$}
We here consider singularities of focal surface $C_j$ of a frontal surface $f$ with pure-frontal singular point.
Let $\bm{V}_j$ be the principal vector relative to $\kappa_j$. 
If $\kappa_t\neq0$, then it is known that there exists a never vanishing smooth map $\bx_j\colon(\R^2,0)\to\R^3$ such that 
$df(\bm{V}_j)={\Lambda}\bx_j$ (\cite[Corollary 3.4]{saji_tera}), where $\Lambda$ is the identifier of singularities. 

\begin{lem}
	The map $\bx_j$ $(j=1,2)$ is normal to $C_j$ at any $q\in(\R^2,0)$.
\end{lem}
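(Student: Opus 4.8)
The claim is that $\bx_j$ is a normal vector to $C_j$, where $C_j = f + \rho_j \nu$ with $\rho_j = 1/\kappa_j$, and $\bx_j$ is the nowhere-vanishing vector with $df(\bm{V}_j) = \Lambda \bx_j$. The natural approach is to compute $dC_j$ directly and check that $\inner{\bx_j}{(C_j)_u} = \inner{\bx_j}{(C_j)_v} = 0$, using the adapted coordinate system $(u,v)$ so that $\Lambda$ can be taken to be $\lambda$ (or a nowhere-zero multiple), $S(f) = \{v=0\}$, $\eta = \partial_v$, and the Weingarten-type formulas of Lemma \ref{lem:weingarten} are available. The key structural fact to exploit is Lemma \ref{lem:rod}: $d\nu(\bm{V}_j) = -\kappa_j\, df(\bm{V}_j)$, i.e. $\bm{V}_j$ is a principal direction in the genuine eigenvector sense even at the singular point.

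First I would differentiate $C_j = f + \rho_j \nu$ to get $dC_j = df + \rho_j\, d\nu + (d\rho_j)\nu$. Since $\nu \perp f_u, f_v$ and also $\nu \perp d\nu$ (as $\nu$ is a unit vector), and since $\bx_j$ is a (nonzero multiple of a) linear combination of $f_u$ and $h$ — indeed from \eqref{eq:dfv}, $df(\bm{V}_1) = v\bx_1$ with $\bx_1 = -(\wtil M - \kappa_1 \wtil F)f_u + (\wtil L - \kappa_1 \wtil E)h$, so $\bx_j \perp \nu$ — the term $(d\rho_j)\nu$ contributes nothing to $\inner{\bx_j}{dC_j}$. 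So it suffices to show $\inner{\bx_j}{df(X) + \rho_j\, d\nu(X)} = 0$ for $X = \partial_u$ and $X = \partial_v$. Equivalently, $\inner{\bx_j}{df(X)} + \rho_j \inner{\bx_j}{d\nu(X)} = 0$, i.e. the $2\times 2$ matrices representing the first fundamental form and (the relevant part of) the second fundamental form in the frame $\{f_u, h\}$ satisfy $\mathrm{I}(\bx_j, X) - \kappa_j^{-1}\cdot(\text{shape-operator pairing})$... — but the cleanest route is to invoke that $\bm{V}_j$ is the principal vector, so for \emph{any} tangent $X$ one has, via the symmetry of the shape operator (equivalently self-adjointness of Weingarten with respect to $\mathrm{I}$), $\inner{d\nu(\bm{V}_j)}{df(X)} = \inner{df(\bm{V}_j)}{d\nu(X)}$; combining with Lemma \ref{lem:rod} this gives $-\kappa_j \inner{df(\bm{V}_j)}{df(X)} = \inner{df(\bm{V}_j)}{d\nu(X)}$, hence $\inner{df(\bm{V}_j)}{df(X) + \rho_j\, d\nu(X)} = 0$. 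Since $df(\bm{V}_j) = \Lambda \bx_j$ and $\Lambda$ is nowhere zero, dividing by $\Lambda$ yields $\inner{\bx_j}{dC_j(X)} = 0$ on the set of regular points of $f$, hence everywhere by continuity.

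The step I expect to be the main (minor) obstacle is justifying the self-adjointness relation $\inner{d\nu(\bm{V}_j)}{df(X)} = \inner{df(\bm{V}_j)}{d\nu(X)}$ at the singular point $v=0$, since the usual argument uses the invertibility of $\mathrm{I}$. Here one should instead verify it by a direct computation in the adapted coordinates: using Lemma \ref{lem:weingarten} to express $\nu_u, \nu_v$ in the frame $\{f_u, h\}$ and the definitions \eqref{eq:fundamentals}, the bilinear form $(X, Y) \mapsto \inner{d\nu(X)}{df(Y)}$ has the matrix $-\begin{pmatrix}\wtil L & \wtil M \\ v\wtil M & v\wtil N_1\end{pmatrix}$ up to the frame, whose symmetry after the appropriate weighting is exactly the content of the identifier $\Lambda$ cancelling — alternatively one just checks the two scalar identities $\inner{\bx_j}{f_u + \rho_j \nu_u} = 0$ and $\inner{\bx_j}{h + \rho_j \nu_v/v} = 0$ directly from the formulas \eqref{eq:dfv}, \eqref{eq:weingarten} and Lemma \ref{lem:rod}. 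Finally, since $\bx_j$ is never zero and $C_j$ is a $C^\infty$ map (as $\kappa_j \neq 0$ on $(\R^2,0)$ by hypothesis), $\bx_j$ is a well-defined (not necessarily unit, but one may normalize) normal field, completing the proof.
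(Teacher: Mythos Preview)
Your argument is essentially correct and takes a somewhat different route from the paper. One slip: you write ``$\Lambda$ is nowhere zero,'' but of course $\Lambda$ vanishes exactly on $S(f)$ --- that is the whole point of the identifier of singularities. What you mean (and in fact use) is that $\bx_j$ is nowhere zero and that the regular set is dense, so the identity $\Lambda\,\inner{\bx_j}{df(X)+\rho_j\,d\nu(X)}=0$ forces $\inner{\bx_j}{dC_j(X)}=0$ off $S(f)$ and then everywhere by continuity. Also, your worry about self-adjointness at singular points is unnecessary: the symmetry $\inner{\nu_u}{f_v}=\inner{\nu_v}{f_u}$ follows directly from differentiating the orthogonality relations $\inner{\nu}{f_u}=\inner{\nu}{f_v}=0$, with no need for non-degeneracy of the first fundamental form.

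By contrast, the paper's proof is a direct coordinate computation: in the adapted system it writes $\bx_j=-(\wtil M-\kappa_j\wtil F)f_u+(\wtil L-\kappa_j\wtil E)h$, expands $(C_j)_u$ and $(C_j)_v$ modulo $\nu$, and checks the two inner products by hand. The identity $\inner{(C_j)_u}{\bx_j}=0$ falls out algebraically, while $\inner{(C_j)_v}{\bx_j}=-v(\wtil E\wtil G-\wtil F^2)(\kappa_j-2H+K\rho_j)=0$ uses the characteristic equation $\kappa_j^2-2H\kappa_j+K=0$. Your approach trades this computation for Lemma~\ref{lem:rod} plus a density/continuity step; it is cleaner conceptually but depends on that lemma, whereas the paper's computation is self-contained once the Weingarten formulas are in hand.
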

\begin{proof}
	Let us take an adapted coordinate system $(u,v)$. 
	Then $\bm{V}_j$ can be written as 
	\[\bm{V}_j=(-v(\wtil{M}-\kappa_j\wtil{F}),\wtil{L}-\kappa_j\wtil{E}).\]
	Thus $\bx_j$ is given by 
	\begin{equation}\label{eq:vec-x}
		\bx_j=-(\wtil{M}-\kappa_j\wtil{F})f_u+(\wtil{L}-\kappa_j\wtil{E})h
	\end{equation}
	(cf. \eqref{eq:dfv}). 
	We note that $\bx_j$ is perpendicular to $\nu$. 
	By direct calculation, we have 
	\[(C_j)_u\equiv f_u+\rho_j\nu_u,\quad (C_j)_v\equiv v(h+\rho_j\nu_1) \mod\ \langle{\nu}\rangle_{\mathcal{E}},\]
	where $\nu_v=v\nu_1$, $\mathcal{E}$ is the ring of $C^\infty$ function germs on $(\R^2,0)$, 
	and $\alpha\equiv\beta\mod\langle{\nu}\rangle_{\mathcal{E}}$ means that 
	there exists $a\in\mathcal{E}$ such that $\alpha-\beta=a\nu$. 
	Thus we have 
	\[
	\begin{aligned}
		\inner{(C_j)_u}{\bx_j}&=(\wtil{M}-\kappa_j\wtil{F})(-\wtil{E}+\rho_j\wtil{L})+(\wtil{L}-\kappa_j\wtil{E})(\wtil{F}-\rho_j\wtil{M})=0,\\
		\inner{(C_j)_v}{\bx_j}&=v(\wtil{M}-\kappa_j\wtil{F})(-\wtil{F}+\rho_j\wtil{M})+v(\wtil{L}-\kappa_j\wtil{E})(\wtil{G}-\rho_j\wtil{N}_1)\\
		&=-v(\wtil{E}\wtil{G}-\wtil{F}^2)(\kappa_j-2H+K\rho_j)=0,
	\end{aligned}
\]
	where $K$ and $H$ are the Gaussian and mean curvatures (see \eqref{eq:KH}). 
	This completes the proof.
\end{proof}
By this lemma, the map $\e_j=\bx_j/|\bx_j|$ gives an unit normal vector to $C_j$ $(j=1,2)$. 
Thus $C_j$ is a frontal surface when $\kappa_t\neq0$. 
\begin{prop}\label{prop:sc}
	Let $C_j$ be a focal surface of a frontal $f\colon(\R^2,0)\to(\R^3,0)$ associated to $\kappa_j$. 
	Then the set of singular points $S(C_j)$ of $C_j$ coincides with the zero set of $\bm{V}_j\rho_j$, 
	where $\bm{V}_j$ is the principal vector relative to $\kappa_j$ and $\rho_j=1/\kappa_j$. 
\end{prop}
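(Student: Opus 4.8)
We work under the standing assumption $\kappa_t(0)\neq0$ (implicit in the statement, since the principal vector $\bm{V}_j$ and the unit normal $\e_j=\bx_j/|\bx_j|$ of $C_j$ are only available at non-umbilic points). The plan is to compute $dC_j$ on a frame adapted to $\bm{V}_j$ and to identify the rank-drop locus with $(\bm{V}_j\rho_j)^{-1}(0)$.

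First I would differentiate $C_j=f+\rho_j\nu$ in the direction $\bm{V}_j$, obtaining
$dC_j(\bm{V}_j)=df(\bm{V}_j)+(\bm{V}_j\rho_j)\nu+\rho_j\,d\nu(\bm{V}_j)$.
Applying Lemma \ref{lem:rod}, which gives $d\nu(\bm{V}_j)=-\kappa_j\,df(\bm{V}_j)$, and using $\rho_j\kappa_j=1$, the $df(\bm{V}_j)$-terms cancel and one is left with the clean identity
\[
dC_j(\bm{V}_j)=(\bm{V}_j\rho_j)\,\nu .
\]
So $dC_j(\bm{V}_j)$ is always proportional to $\nu$, and it vanishes exactly on $(\bm{V}_j\rho_j)^{-1}(0)$.

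Next I would fix an adapted coordinate system $(u,v)$ and check that $\{\bm{V}_j,\partial_u\}$ is a local frame near $0$: by \eqref{eq:princ-vect} the $\partial_v$-component of $\bm{V}_j$ is $\wtil{L}-\kappa_j\wtil{E}$, which equals $\kappa_\nu-\kappa_j\neq0$ at $v=0$ (the non-umbilic estimate recalled in the proof of Proposition \ref{prop:ridge}). Since $C_j$ is a frontal with unit normal $\e_j$, its signed area density is $\det((C_j)_u,(C_j)_v,\e_j)$ and $S(C_j)$ is its zero set; this differs, up to a nowhere-zero factor (the Jacobian of the change of frame, times $|\bx_j|$), from $\det\big(dC_j(\bm{V}_j),(C_j)_u,\bx_j\big)$. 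Substituting the identity of the previous step turns this into $(\bm{V}_j\rho_j)\,\det(\nu,(C_j)_u,\bx_j)$, so it remains to show the second factor is nowhere zero near $0$. Because $\bx_j$ is orthogonal to both $\nu$ and $(C_j)_u$ (the latter by the preceding lemma), this determinant is nonzero if and only if $(C_j)_u$ is not a scalar multiple of $\nu$; writing $(C_j)_u=f_u+(\rho_j)_u\nu+\rho_j\nu_u$ and invoking Lemmas \ref{lem:weingarten} and \ref{lem:invariant}, the $h$-component of $(C_j)_u$ at $v=0$ comes out to be $-\rho_j\kappa_t\neq0$, and this persists on a neighborhood of $0$ by continuity of the coefficients in the frame $\{f_u,h,\nu\}$. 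Combining these observations yields $S(C_j)=(\bm{V}_j\rho_j)^{-1}(0)$.

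The step I expect to be the main obstacle is the last one: the cancellation producing $dC_j(\bm{V}_j)\parallel\nu$ only shows that $C_j$ has corank at least one on $(\bm{V}_j\rho_j)^{-1}(0)$, and one must separately rule out corank two by exhibiting a tangent direction whose image stays transverse to $\nu$ — which is where the explicit Weingarten-type formula (Lemma \ref{lem:weingarten}) and the hypothesis $\kappa_t\neq0$ genuinely enter. Everything else is the one-line differentiation above together with routine linear algebra in the adapted frame.
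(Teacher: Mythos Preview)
Your proof is correct, and it takes a genuinely different route from the paper's. The paper expands $(C_j)_u$ and $(C_j)_v$ fully in the moving frame $\{f_u,h,\nu\}$ via Lemma \ref{lem:weingarten}, forms the cross product, and then simplifies the resulting expression by hand to obtain the explicit identity
\[
\det\big((C_j)_u,(C_j)_v,\e_j\big)=-(1-\rho_j\kappa_{j+1})\,(\bm{V}_j\rho_j)\,\frac{\det(f_u,h,\nu)}{|\bx_j|},
\]
after which the nonvanishing of the factor $1-\rho_j\kappa_{j+1}=\rho_j(\kappa_j-\kappa_{j+1})$ gives the conclusion. Your argument instead leans on the Rodrigues-type identity of Lemma \ref{lem:rod} to get $dC_j(\bm{V}_j)=(\bm{V}_j\rho_j)\nu$ in one line --- a fact the paper only records later, in Lemma \ref{lemma:nullvectorCj} --- and then reduces the rank question to showing $(C_j)_u\not\parallel\nu$, which is a single coefficient check. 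What the paper's computation buys is the explicit area-density formula with its $(1-\rho_j\kappa_{j+1})$ factor; what your approach buys is a considerably shorter and more conceptual argument that makes the role of the principal direction transparent.
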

\begin{proof}
	We give a proof for $j=1$. 
	Take an adapted coordinate system $(u,v)$ on $(\R^2,0)$. 
	Then the map $\e_1=\bx_1/|\bx_1|$ is a unit normal vector to $C_1$, 
	where $\bx_1$ is given by \eqref{eq:vec-x} satisfying $df(\bm{V}_1)=v\bx_1$. 
	We then calculate $\det((C_1)_u,(C_1)_v,\e_1)$. 
	By direct calculations, one can see 
	\[
	\begin{aligned}
		(C_1)_u&=f_u+\rho_1\nu_u+(\rho_1)_u\nu=(1+\rho_1X_1)f_u+\rho_1X_2h+(\rho_1)_u\nu\\
		&=A_1f_u+B_1h+C_1\nu,\\
		(C_1)_v&=f_v+\rho_1\nu_v+(\rho_1)_v\nu=v\rho_1Y_1f_u+v(1+\rho_1Y_2)h+(\rho_1)_v\nu\\
		&=A_2f_u+B_2h+C_2\nu,
	\end{aligned}
\]
	where we set $\nu_u=X_1f_u+X_2h$ and $\nu_v=vY_1f_u+vY_2h$ (see Lemma \ref{lem:weingarten}). 
	Thus we have 
	\small
	\[(C_1)_u\times(C_1)_v=(A_1B_2-A_2B_1)f_u\times h+(A_1C_2-A_2C_1)f_u\times\nu+(B_1C_2-B_2C_1)h\times\nu.\]
	\normalsize
	We remark that $f_u\times h$ is parallel to $\nu$. 
	Hence $\det((C_1)_u,(C_1)_v,\e_1)$ can be calculated as 
	\small
	\[
	\begin{aligned}
		&\det((C_1)_u,(C_1)_v,\e_1)\\
		&=-\left((\wtil{L}-\kappa_1\wtil{E})(A_1C_2-A_2C_1)+(\wtil{M}-\kappa_1\wtil{F})(B_1C_2-B_2C_1)\right)\frac{\det(f_u,h,\nu)}{|\bx_1|}.
	\end{aligned}
	\]
	\normalsize
	By Lemma \ref{lem:weingarten}, we have 
	\[
	\begin{aligned}
	&\left((\wtil{L}-\kappa_1\wtil{E})(A_1C_2-A_2C_1)+(\wtil{M}-\kappa_1\wtil{F})(B_1C_2-B_2C_1)\right)\\
	&=(1-\rho_1\kappa_2)\left(v(\wtil{M}-\kappa_1\wtil{F})(\rho_1)_u-(\wtil{L}-\kappa_1\wtil{E})(\rho_1)_v\right)
	=-(1-\rho_1\kappa_2)\bm{V}_1\rho_1.
	\end{aligned}
	\]
	Since $\kappa_1\neq\kappa_2$, one can notice that $1-\rho_1\kappa_2=\rho_1(\kappa_1-\kappa_2)\neq0$ holds.
	Thus the zero set of $\det((C_1)_u,(C_1)_v,\e_1)$ coincides with the zero set of $\bm{V}_1\rho_1$. 
	This implies that $S(C_1)=(\bm{V}_1\rho_1)^{-1}(0)$. 
	For the case of $j=2$, one can prove similarly. 
\end{proof}

This result corresponds to the case of regular surfaces \cite{porteous}. 

By the above proposition, a pure-frontal singular point $0$ of a frontal $f\colon(\R^2,0)\to(\R^3,0)$ 
is also a singular point of $C_j$ if and only if $\bm{V}_j\rho_j=0$ at $0$. 
We characterize this condition in terms of geometric invariants. 
\begin{prop}\label{prop:sing_mean}
	Let $f\colon(\R^2,0)\to(\R^3,0)$ be a frontal and $0$ a pure-frontal singular point of $f$. 
	Suppose that $\kappa_t\neq0$ at $0$. 
	Then the point $0$ is also a singular point of $C_j$ $(j=1,2)$ if and only if $r_c=0$ holds at $0$.
\end{prop}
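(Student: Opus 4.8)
The plan is to combine Proposition \ref{prop:sc} with the intermediate computations already performed in the proof of Proposition \ref{prop:ridge}, so that almost no new calculation is needed. First I would use Proposition \ref{prop:sc}: the point $0$ belongs to $S(C_j)$ if and only if $\bm{V}_j\rho_j=0$ at $0$. Since the principal curvatures are assumed to be nowhere zero, writing $\rho_j=\kappa_j^{-1}$ and applying the chain rule gives $\bm{V}_j\rho_j=-\kappa_j^{-2}\bm{V}_j\kappa_j$, so $0\in S(C_j)$ exactly when $\bm{V}_j\kappa_j=0$ at $0$; in other words, $0\in S(C_j)$ precisely when $0$ is a $\bm{V}_j$-ridge point of $f$.

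Next I would reduce the ridge condition to a condition on $(\kappa_j)_v$. Taking an adapted coordinate system and using \eqref{eq:princ-vect} together with Lemma \ref{lem:invariant} (so that $\wtil{E}(u,0)=1$ and $\wtil{L}(u,0)=\kappa_\nu(u)$), one obtains $\bm{V}_j\kappa_j=(\kappa_\nu-\kappa_j)(\kappa_j)_v$ at $0$, exactly as in the proof of Proposition \ref{prop:ridge} for $j=1$, and the computation is identical for $j=2$. Since $\kappa_t\neq0$ at $0$, the point is non-umbilic, hence $\kappa_\nu-\kappa_j\neq0$ at $0$ (by the formula quoted from \cite[(3.8)]{saji_tera} in that proof) and also $\Gamma=H^2-K=\bigl((\kappa_1-\kappa_2)/2\bigr)^2>0$ at $0$. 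Therefore $\bm{V}_j\kappa_j=0$ at $0$ if and only if $(\kappa_j)_v=0$ at $0$.

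Finally I would invoke \eqref{eq:diff-kj} and its $j=2$ counterpart displayed in the proof of Proposition \ref{prop:ridge}, namely $(\kappa_j)_v=(-1)^{j+1}\dfrac{r_c}{48\sqrt{\Gamma}}(\kappa_j-\kappa_\nu)$ at $0$. Because $\Gamma>0$ and $\kappa_j-\kappa_\nu\neq0$ at $0$, this quantity vanishes if and only if $r_c=0$ at $0$. Chaining the three equivalences $0\in S(C_j)\iff\bm{V}_j\kappa_j=0\iff(\kappa_j)_v=0\iff r_c=0$ then yields the assertion for both $j=1,2$.

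I do not expect a genuine obstacle here, since all the delicate analytic content is already packaged in Lemma \ref{lem:invariant} and in the proof of Proposition \ref{prop:ridge}. The only points requiring a little care are the non-vanishing of $\kappa_\nu-\kappa_j$ and of $\Gamma$ at $0$ — both consequences of the non-umbilicity forced by $\kappa_t\neq0$ — and correctly carrying the sign $(-1)^{j+1}$ in the $j=2$ case; neither affects the stated equivalence.
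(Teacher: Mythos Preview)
Your proposal is correct and follows essentially the same route as the paper: both arguments use Proposition \ref{prop:sc}, reduce $\bm{V}_j\rho_j=0$ to $(\kappa_j)_v=0$ via the non-vanishing of $\kappa_\nu-\kappa_j$ in adapted coordinates, and then appeal to the computation in the proof of Proposition \ref{prop:ridge}. The only cosmetic difference is that the paper computes $\bm{V}_j\rho_j=(\kappa_\nu-\kappa_j)(\rho_j)_v$ directly, whereas you first pass through $\bm{V}_j\rho_j=-\kappa_j^{-2}\bm{V}_j\kappa_j$; this is an equivalent rephrasing.
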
 
\begin{proof}
	Take an adapted coordinate system $(u,v)$ on $(\R^2,0)$. 
	Then we have 
	$\bm{V}_j\rho_j=(\wtil{L}-\kappa_j\wtil{E})(\rho_j)_v$ at $0$. 
	We note that 
	$\wtil{L}-\kappa_j\wtil{E}=\kappa_\nu-\kappa_j\neq0$ 
	at $0$ for $j=1,2$ if $\kappa_t\neq0$. 
	Thus $\bm{V}_j\rho_j=0$ at $0$ if and only if $(\rho_j)_v=0$ at $0$ when $\kappa_t\neq0$. 
	This is equivalent to $(\kappa_j)_v=0$ at $0$. 
	By Proposition \ref{prop:ridge}, we have the assertion.
\end{proof}
This proposition means that if $0$ is a $\bm{V}_j$-ridge point (and also a $\bm{V}_{j+1}$-sub-parabolic point) of $f$, 
then $0$ is a singular point of $C_j$ (see Proposition \ref{prop:ridge}).
By Corollary \ref{cor:ridge} and Proposition \ref{prop:sing_mean}, 
the following assertion follows immediately. 
\begin{cor}\label{cor:sing_sing}
	If a frontal $f\colon(\R^2,0)\to(\R^3,0)$ has a pure-frontal singular point other than a $5/2$-cuspidal edge at $0$ and $\kappa_t\neq0$, 
	then $0$ is a singular point of both $C_1$ and $C_2$. 
\end{cor}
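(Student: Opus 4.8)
The plan is simply to splice together the two characterizations already in hand, so the "proof" is a short chain of equivalences rather than a computation. First I would record the translation of the hypothesis: by the discussion in Subsection~\ref{subsec:geoinv} (via \cite[Theorem 4.1]{hks} and \cite[(3.12)]{hs}), a pure-frontal singular point $0$ of $f$ is a $5/2$-cuspidal edge if and only if the secondary cuspidal curvature $r_c$ does not vanish at $0$. Hence the assumption that $0$ is a pure-frontal singular point which is \emph{not} a $5/2$-cuspidal edge is exactly the condition $r_c(0)=0$.

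Second, I would invoke Proposition~\ref{prop:sing_mean}: under the standing hypothesis $\kappa_t\neq0$ at $0$, the point $0$ is a singular point of $C_j$ $(j=1,2)$ if and only if $r_c=0$ at $0$. Combining this with the previous paragraph gives $0\in S(C_1)\cap S(C_2)$, which is the assertion. (Equivalently, one may route the argument through Corollary~\ref{cor:ridge}: the hypothesis forces $0$ to be a $\bm{V}_j$-ridge and $\bm{V}_{j+1}$-sub-parabolic point for each $j=1,2$, and then the remark following Proposition~\ref{prop:sing_mean} identifies $0$ as a singular point of $C_j$.)

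There is no real obstacle here; the only point requiring a moment's care is the consistent use of the non-degeneracy hypothesis $\kappa_t\neq0$: it is what guarantees non-umbilicity and the non-vanishing of $\kappa_\nu-\kappa_j$ at $0$ for $j=1,2$, which is used both in the proof of Proposition~\ref{prop:ridge} (hence in Corollary~\ref{cor:ridge}) and in the proof of Proposition~\ref{prop:sing_mean}. Once these are in place, the corollary follows immediately, and I would present it as a two-line deduction with references to Corollary~\ref{cor:ridge} and Proposition~\ref{prop:sing_mean}.
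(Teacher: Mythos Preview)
Your proposal is correct and matches the paper's own proof, which simply states that the assertion follows immediately from Corollary~\ref{cor:ridge} and Proposition~\ref{prop:sing_mean}. You have spelled out the two-line deduction in exactly the intended way (and even noted the alternative phrasing via Corollary~\ref{cor:ridge}), so there is nothing to add.
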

By this corollary, when a frontal $f$ has a fold singular point at $0$, 
then $C_j$ has a singularity at $0$ if $\kappa_t\neq0$. 
We remark that {\it maxfaces} introduced by Umehara and Yamada \cite{uy-maxface}, 
that is, spacelike zero mean curvature surfaces in the Minkowski $3$-space $\R^3_1$ (or $L^3$), 
with fold singularities (cf. \cite{fkkrsuyy}) 
are examples of such surfaces when we consider the ambient space as the Euclidean $3$-space $\R^3$ (cf. \cite{mnt}). 
Moreover, the {\it $7/2$-cuspidal cross cap} (see \cite[Example 14.10]{porteous}), which is defined as a map germ $\cal{A}$-equivalent to 
$(u,v)\mapsto(u,v^2,uv^5)$ at the origin, with non vanishing $\kappa_t$ is other typical example (see Figure \ref{fig:27ccr}). 
\begin{figure}[htbp]
	\centering
		\includegraphics[width=3cm]{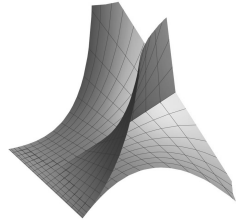}
		\caption{The image of a $7/2$-cuspidal cross cap with non vanishing $\kappa_t$.}
	\label{fig:27ccr}
\end{figure}

\begin{lem}\label{lemma:nullvectorCj}
	Let $f\colon(\R^2,0)\to(\R^3,0)$ be a frontal and $0$ a pure-frontal singular point of $f$. 
	Suppose that $\kappa_t\neq0$ and $r_c=0$ hold at $0$. 
	Then $C_j$ $(j=1,2)$ satisfies $\rank dC_j(0)=1$. 
	Moreover, $\bm{V}_j$ is a null vector field of $C_j$. 
\end{lem}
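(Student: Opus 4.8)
The plan is to compute $dC_j(\bm{V}_j)$ explicitly and to read off both assertions from the resulting formula. Writing $C_j=f+\rho_j\nu$ and differentiating in the direction of $\bm{V}_j$ gives
\[
dC_j(\bm{V}_j)=df(\bm{V}_j)+\rho_j\,d\nu(\bm{V}_j)+(\bm{V}_j\rho_j)\,\nu .
\]
By Lemma \ref{lem:rod} we may substitute $d\nu(\bm{V}_j)=-\kappa_j\,df(\bm{V}_j)$; since $\rho_j\kappa_j=1$, the tangential term $(1-\rho_j\kappa_j)df(\bm{V}_j)$ drops out and we are left with the identity $dC_j(\bm{V}_j)=(\bm{V}_j\rho_j)\,\nu$, valid as an equality of vector fields on a neighbourhood of $0$.

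Next I would bring in Propositions \ref{prop:sc} and \ref{prop:sing_mean}. The hypotheses $\kappa_t\neq0$ and $r_c=0$ at $0$ force $0\in S(C_j)=(\bm{V}_j\rho_j)^{-1}(0)$; hence $\bm{V}_j\rho_j$ vanishes at every point of $S(C_j)$, and by the displayed identity so does $dC_j(\bm{V}_j)$ there. In an adapted coordinate system $\bm{V}_j=(-v(\wtil{M}-\kappa_j\wtil{F}),\wtil{L}-\kappa_j\wtil{E})$ equals $(\kappa_\nu-\kappa_j)\partial_v$ at the origin, and $\kappa_\nu-\kappa_j\neq0$ at $0$ whenever $\kappa_t\neq0$ (as observed in the proof of Proposition \ref{prop:ridge}), so $\bm{V}_j$ is nowhere zero near $0$. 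Therefore $\bm{V}_j$ is a null vector field of $C_j$.

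For the rank assertion: since $\bm{V}_j(0)\neq0$ lies in $\ker dC_j(0)$, we have $\rank dC_j(0)\leq1$, so it remains only to exclude $\rank dC_j(0)=0$. For this I would compute $(C_j)_u$ at $0$ using the Weingarten formula (Lemma \ref{lem:weingarten}) together with the normalizations $\wtil{E}(u,0)=\wtil{G}(u,0)=1$, $\wtil{F}(u,0)=0$ of Lemma \ref{lem:invariant}, which yield $\nu_u(u,0)=-\kappa_\nu f_u-\kappa_t h$ and hence
\[
(C_j)_u(0)=(1-\rho_j\kappa_\nu)\,f_u-\rho_j\kappa_t\,h+(\rho_j)_u\,\nu .
\]
Since $\rho_j\neq0$ and $\kappa_t\neq0$ at $0$, the $h$-component $-\rho_j\kappa_t$ is nonzero, so $(C_j)_u(0)\neq0$ and $\rank dC_j(0)=1$. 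Equivalently, one can note that $(C_j)_v(0)=(\rho_j)_v(0)\,\nu=0$ (using $\bm{V}_j\rho_j(0)=(\kappa_\nu-\kappa_j)(\rho_j)_v(0)=0$), so $\partial_v$ spans the kernel while $(C_j)_u(0)\neq0$.

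The computations are all short; the only place needing a little care is the bookkeeping of the $\nu$-components of $(C_j)_u$ and $(C_j)_v$ (which carry the derivatives of $\rho_j$) and the verification that $\bm{V}_j$ does not vanish, so that $dC_j$ genuinely has a one-dimensional kernel along the whole singular set and not merely at $0$. I do not expect a substantial obstacle here: Lemma \ref{lem:rod} does essentially all the work, and the rank-one claim reduces to the single nonvanishing coefficient $-\rho_j\kappa_t$.
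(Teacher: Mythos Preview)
Your proof is correct and follows essentially the same approach as the paper: both derive the identity $dC_j(\bm{V}_j)=(\bm{V}_j\rho_j)\nu$ from Lemma~\ref{lem:rod}, invoke Proposition~\ref{prop:sc} to see that this vanishes on $S(C_j)$, and verify $\rank dC_j(0)=1$ by the explicit formula $(C_j)_u(0)=(1-\rho_j\kappa_\nu)f_u-\rho_j\kappa_t h+(\rho_j)_u\nu$ with $\rho_j\kappa_t\neq0$. The only cosmetic difference is order: the paper first establishes the rank by computing both partials $(C_j)_u,(C_j)_v$ at $0$ and then identifies the null direction, whereas you reverse these two steps.
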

\begin{proof}
	Take an adapted coordinate system $(u,v)$. 
	Then we see that 
	\[(C_j)_u=f_u+\rho_j\nu_u+(\rho_j)_u\nu,\quad 
	(C_j)_v=v(h+\rho_j\nu_1)+(\rho_j)_v\nu\]
	hold. 
	By the assumption, $(\rho_j)_v=0$ holds at $0$, and hence $(C_j)_v=0$ holds at $0$. 
	On the other hand, by Lemmas \ref{lem:weingarten} and \ref{lem:invariant}, we have 
	\[(C_j)_u=(1-\rho_j\kappa_\nu)f_u-\rho_j\kappa_th+(\rho_j)_u\nu\neq0\]
	at $0$. 
	Thus $\rank dC_j=1$ holds at $0$. 
	By this property, there exists a null vector field $\eta^{C_j}$ $(j=1,2)$ for $C_j$ around $0$. 
	On the other hand, by Lemma \ref{lem:rod}, we see that 
	\[dC_j(\bm{V}_j)=df(\bm{V}_j)+\rho_jd\nu(\bm{V}_j)+(\bm{V}_j\rho_j)\nu=(\bm{V}_j\rho_j)\nu\]
	holds. 
	Since $S(C_j)=(\bm{V}_j\rho_j)^{-1}(0)$ holds by Proposition \ref{prop:sc}, 
	it follows that $dC_j(\bm{V}_j)=0$ holds on $S(C_j)$. 
	Therefore one can take $\eta^{C_j}$ as $\eta^{C_j}=\bV_j$.
	This shows the conclusion.
\end{proof}
This property also holds for cases of regular surfaces without umbilic points 
and fronts with non-degenerate singular points (\cite{ifrt,porteous,tera1}).

\begin{prop}\label{prop:frontalness}
	If a point $0$ is a pure-frontal singular point of a frontal $f\colon(\R^2,0)\to(\R^3,0)$ 
	with $\kappa_t\neq0$ and also a singular point of $C_j$ $(j=1$ or $2)$, 
	then $C_j$ is a frontal but not a front at $0$.
\end{prop}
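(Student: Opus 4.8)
The plan is to use the explicit unit normal $\e_j=\bx_j/|\bx_j|$ of $C_j$: since this already shows $C_j$ is a frontal, the only thing to prove is that $C_j$ is not a front at $0$, i.e.\ that $(C_j,\e_j)\colon(\R^2,0)\to\R^3\times\bS^2$ fails to be an immersion at $0$. I would fix an adapted coordinate system $(u,v)$. Since $0\in S(C_j)$ and $\kappa_t\neq0$, Proposition~\ref{prop:sc} gives $\bm{V}_j\rho_j=0$ at $0$, and in adapted coordinates this forces $(\rho_j)_v(0)=0$; then, by Lemma~\ref{lemma:nullvectorCj}, $\rank dC_j(0)=1$ with $(C_j)_v(0)=0$ and $(C_j)_u(0)\neq0$. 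Hence the differential of $(C_j,\e_j)$ at $0$ has columns $\big((C_j)_u(0),(\e_j)_u(0)\big)$ and $\big(0,(\e_j)_v(0)\big)$, so it has rank $2$ if and only if $(\e_j)_v(0)\neq0$. Thus the whole statement reduces to proving $(\e_j)_v(0)=0$.

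To prove this I would exhibit three vectors spanning $\R^3$, all orthogonal to $(\e_j)_v(0)$. Differentiating $\inner{\e_j}{\e_j}\equiv1$ gives $(\e_j)_v(0)\perp\e_j(0)$. Differentiating $\inner{\e_j}{\nu}\equiv0$ gives $\inner{(\e_j)_v}{\nu}=-\inner{\e_j}{\nu_v}$, which vanishes at $0$ because $\nu_v(0)=0$ (this is exactly where the hypothesis that $0$ is \emph{pure-frontal} enters), so $(\e_j)_v(0)\perp\nu(0)$. Differentiating $\inner{\e_j}{(C_j)_u}\equiv0$ gives $\inner{(\e_j)_v}{(C_j)_u}=-\inner{\e_j}{(C_j)_{uv}}$; using $f_v=vh$, $\nu_v=v\nu_1$ and $(\rho_j)_v(0)=0$ one checks that $(C_j)_{uv}(0)$ is a scalar multiple of $\nu$, so the right-hand side vanishes at $0$ and $(\e_j)_v(0)\perp(C_j)_u(0)$.

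It then remains to check that $\{\nu(0),\e_j(0),(C_j)_u(0)\}$ is a basis of $\R^3$. In the moving frame $\{f_u,h,\nu\}$, along the $u$-axis one has $\bx_j(0)=-\kappa_t f_u+(\kappa_\nu-\kappa_j)h$ by \eqref{eq:vec-x} and Lemma~\ref{lem:invariant}, and $(C_j)_u(0)=(1-\rho_j\kappa_\nu)f_u-\rho_j\kappa_t h+(\rho_j)_u\nu$ by Lemmas~\ref{lem:weingarten} and~\ref{lem:invariant}; since $\e_j(0)$ is a nonzero multiple of $\bx_j(0)$, it suffices that the $\langle f_u,h\rangle_{\R}$-components of $\bx_j(0)$ and $(C_j)_u(0)$ be linearly independent, and their $2\times2$ determinant equals, up to sign, $\kappa_j^{-1}\big(\kappa_t^2+(\kappa_\nu-\kappa_j)^2\big)\neq0$ (using $\kappa_t\neq0$ and $\kappa_j\neq0$). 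Therefore $(\e_j)_v(0)=0$, so $C_j$ is a frontal that is not a front at $0$. The one computation that is not pure linear algebra in the frame $\{f_u,h,\nu\}$ is the verification that $(C_j)_{uv}(0)\in\langle\nu\rangle_{\R}$, which uses the adapted-coordinate structure and $(\rho_j)_v(0)=0$; I do not expect any genuine obstacle there.
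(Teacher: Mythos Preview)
Your argument is correct and reaches the same key conclusion as the paper, namely $(\e_j)_v(0)=0$, but by a genuinely different route. The paper computes $(\bx_j)_v$ at $0$ directly in the frame $\{f_u,h,\nu\}$, using the adapted-coordinate identities for $\wtil{E}_v,\wtil{F}_v,\wtil{L}_v,\wtil{M}_v$ and the formula $(\kappa_j)_v=\pm r_c(\kappa_j-\kappa_\nu)/(48\sqrt{\Gamma})$; then invoking $r_c(0)=0$ (equivalent to $0\in S(C_j)$) makes $(\bx_j)_v(0)$ a scalar multiple of $\bx_j(0)$, whence $(\e_j)_v(0)=0$. Your approach sidesteps all of this by differentiating the three orthogonality relations $\inner{\e_j}{\e_j}=1$, $\inner{\e_j}{\nu}=0$, $\inner{\e_j}{(C_j)_u}=0$ and checking that $\{\nu,\e_j,(C_j)_u\}$ is a basis at $0$; the only substantive computation is the verification that $(C_j)_{uv}(0)\in\langle\nu\rangle_\R$, and indeed $f_{uv}(u,0)=\nu_{uv}(u,0)=\nu_v(u,0)=0$ together with $(\rho_j)_v(0)=0$ give $(C_j)_{uv}(0)=(\rho_j)_{uv}(0)\,\nu(0)$, so there is no obstacle. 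Your argument is shorter and more conceptual; what it loses relative to the paper is the explicit formula for $(\e_j)_v$ along the $u$-axis in terms of $r_c$ (equation~\eqref{eq:ev}), which the paper extracts from its direct computation and then reuses in the proofs of Theorems~\ref{thm:pure} and~\ref{thm:curvature}.
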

\begin{proof}
	By Lemma \ref{lemma:nullvectorCj}, it is sufficient to show $d\e_j(\bV_j)=0$ at $0$. 
	Let us take an adapted coordinate system $(u,v)$. 
	We then deal with the case of $j=1$. 
	Suppose that $\kappa_t\neq0$ and $0$ is a singular point of $C_1$. 
	By a direct calculation, we have 
	$d\e_1(\bm{V}_1)=(\wtil{L}-\kappa_1\wtil{E})(\e_1)_v$ at $0$. 
	Since $\wtil{L}-\kappa_1\wtil{E}\neq0$, we consider $(\e_1)_v$ at $0$. 
	Since $\e_1=\bx_1/|\bx_1|$, it follows that $(\e_1)_v=(|\bx_1|^{-3})((\bx_1)_v|\bx_1|^2-\bx_1\inner{\bx_1}{(\bx_1)_v})$. 
	Thus we evaluate $(\bx_1)_v|\bx_1|^2-\bx_1\inner{\bx_1}{(\bx_1)_v}$ at $0$. 
	Since $\bx_1$ is given by 
	$\bx_1=-\kappa_tf_u+(\kappa_\nu-\kappa_1)h$
	at $0$, and hence $|\bx_1|^2=\kappa_t^2+(\kappa_\nu-\kappa_1)^2$ at $0$. 
	Further, $(\bx_1)_v$ can be calculated as 
	\[(\bx_1)_v=-(\wtil{M}_v-\kappa_1\wtil{F}_v)f_u+(\wtil{L}_v-(\kappa_1)_v)h+(\wtil{L}-\kappa_1)h_v\]
	at $0$ since $\wtil{F}(0)=\wtil{E}_v(0)=0$ and $\wtil{E}(0)=1$. 
	We note that $h_v=\wtil{F}_vf_u+(\wtil{G}_v/2)h$ holds at $0$ (\cite[(2.9)]{saji_tera}). 
	Moreover, since $f_{uv}=\nu_v=0$ at $0$, we see that $\wtil{L}_v=0$ 
	and $\wtil{M}_v=\wtil{F}_v\wtil{L}+\wtil{G}_v\wtil{M}/2$ at $0$. 
	Thus $(\bx_1)_v$ can be written as 
	\begin{equation}\label{eq:x1_v}
		(\bx_1)_v=-\frac{\kappa_t\wtil{G}_v}{2}f_u
		-(\kappa_1-\kappa_\nu)\left(\frac{r_c}{48\sqrt{\Gamma}}+\frac{\wtil{G}_v}{2}\right)h
	\end{equation}
	at $0$ by \eqref{eq:diff-kj}. 
	Therefore we have 
	\[\inner{\bx_1}{(\bx_1)_v}=\frac{\kappa_t^2\wtil{G}_v}{2}
	+(\kappa_1-\kappa_\nu)^2\left(\frac{r_c}{48\sqrt{\Gamma}}+\frac{\wtil{G}_v}{2}\right)\]
	at $0$. 
	Since $0$ is a singular point of $C_j$, $r_c=0$ holds at $0$. 
	Thus we have 
	\[(\bx_1)_v=-\frac{\wtil{G}_v}{2}(\kappa_tf_u+(\kappa_1-\kappa_\nu)h)\]
	at $0$ by \eqref{eq:x1_v}. 
	Thus we have 
	\[(\bx_1)_v|\bx_1|^2=-\frac{(\kappa_t^2+(\kappa_1-\kappa_\nu)^2)\wtil{G}_v}{2}(\kappa_tf_u+(\kappa_1-\kappa_\nu)h)
	=\inner{\bx_1}{(\bx_1)_v}\bx_1\]
	at $0$. 
	This implies that $(\e_1)_v=0$ holds at $0$. 
	Therefore we get the conclusion for $j=1$. 
	For the case of $j=2$, one can show in a similar way.
\end{proof}

We seek conditions that $0$ is a singular point of the second kind for $C_j$.
\begin{lem}
	Let $f$ be a frontal and $0$ a pure-frontal singular point of $f$ with $\kappa_t(0)\neq0$. 
	Take an adapted coordinate system $(u,v)$ 
	and suppose that $0$ is a $\bV_j$-ridge point $(j=1,2)$ of $f$, that is, $r_c=0$ at $0$. 
	Then $0$ is an at least second order $\bV_j$-ridge point of $f$ if and only if 
	\[(\kappa_\nu-\kappa_j)(\kappa_j)_{vv}-\kappa_t (\kappa_j)_u=0\] 
	or, equivalently, 
	\[\kappa_t(\kappa_j)_{vv}-\left(\frac{r_b}{3}-\kappa_j\right)(\kappa_j)_u=0\]
	at $0$.
\end{lem}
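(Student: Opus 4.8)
The plan is to evaluate $\bV_j^2\kappa_j$ at the origin directly from the formula \eqref{eq:princ-vect} for the principal vector together with the adapted-coordinate normalizations of Lemma \ref{lem:invariant}, and then rewrite the resulting vanishing condition. Regarding $\bV_j$ as the vector field $-v(\wtil{M}-\kappa_j\wtil{F})\partial_u+(\wtil{L}-\kappa_j\wtil{E})\partial_v$ and recalling that along the $u$-axis $\wtil{E}=\wtil{G}=1$, $\wtil{F}=0$, $\wtil{L}=\kappa_\nu$, $\wtil{M}=\kappa_t$, we get $\bV_j|_0=(\kappa_\nu-\kappa_j)\partial_v$; as in the proof of Proposition \ref{prop:ridge}, $\bV_j\kappa_j=(\kappa_\nu-\kappa_j)(\kappa_j)_v$ at $0$, and since $\kappa_t\neq0$ forces $\kappa_\nu-\kappa_j\neq0$, the hypothesis $r_c=0$ (i.e. that $0$ is a $\bV_j$-ridge point) is equivalent to $(\kappa_j)_v=0$ at $0$. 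Note that $\kappa_j$, and hence $\bV_j\kappa_j$ and $\bV_j^2\kappa_j$, are $C^\infty$ near $0$ for a pure frontal, so these derivatives make sense. This normalization $(\kappa_j)_v|_0=0$ will be used repeatedly below.

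Next I would apply $\bV_j$ to the $C^\infty$ function $g:=\bV_j\kappa_j=-v(\wtil{M}-\kappa_j\wtil{F})(\kappa_j)_u+(\wtil{L}-\kappa_j\wtil{E})(\kappa_j)_v$. Since $\bV_jg=-v(\wtil{M}-\kappa_j\wtil{F})g_u+(\wtil{L}-\kappa_j\wtil{E})g_v$ and the $\partial_u$-component carries a factor $v$, at the origin one gets $\bV_j^2\kappa_j|_0=(\kappa_\nu-\kappa_j)\,g_v|_0$. Differentiating $g$ in $v$: the first summand contributes $-(\wtil{M}-\kappa_j\wtil{F})(\kappa_j)_u|_0=-\kappa_t(\kappa_j)_u$ (the terms still carrying a factor $v$ drop out), while in the second summand every term produced by the product rule except $(\wtil{L}-\kappa_j\wtil{E})(\kappa_j)_{vv}$ is proportional to $(\kappa_j)_v$ and thus vanishes at $0$ by the ridge hypothesis, leaving $(\kappa_\nu-\kappa_j)(\kappa_j)_{vv}$. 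Hence
\[
\bV_j^2\kappa_j\big|_0=(\kappa_\nu-\kappa_j)\bigl[(\kappa_\nu-\kappa_j)(\kappa_j)_{vv}-\kappa_t(\kappa_j)_u\bigr],
\]
and since $\kappa_\nu-\kappa_j\neq0$, the condition that $0$ be an at least second order $\bV_j$-ridge point (namely $\bV_j^2\kappa_j|_0=0$, given that $\bV_j\kappa_j|_0=0$ already) is equivalent to the first displayed equation in the statement.

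For the equivalence with the second form, I would use that at $0$ the principal curvatures are the roots of $X^2-2HX+K=0$ with $2H=\kappa_\nu+r_b/3$ and $K=\kappa_\nu\,r_b/3-\kappa_t^2$; this follows from \eqref{eq:KH} and Lemma \ref{lem:invariant} via $\wtil{N}_1(u,0)=r_b/3$, and is equivalent to \cite[(3.8)]{saji_tera} as recalled in the proof of Proposition \ref{prop:ridge}. Thus $X^2-2HX+K=(X-\kappa_\nu)(X-r_b/3)-\kappa_t^2$ at $0$, so $(\kappa_\nu-\kappa_j)(r_b/3-\kappa_j)=\kappa_t^2$ there, and in particular $r_b/3-\kappa_j\neq0$. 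Multiplying the first displayed equation by $(r_b/3-\kappa_j)/\kappa_t$ and substituting this identity turns it into $\kappa_t(\kappa_j)_{vv}-(r_b/3-\kappa_j)(\kappa_j)_u=0$, which is the second form. The only delicate point is the $v$-differentiation of $g$ in the middle step; once the ridge normalization $(\kappa_j)_v|_0=0$ is invoked it is routine, and the quadratic identity makes the two forms interchangeable at once.
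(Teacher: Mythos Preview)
Your argument is correct, and the first half is essentially identical to the paper's: both compute $\bV_j^2\kappa_j$ at the origin using the explicit form \eqref{eq:princ-vect} of $\bV_j$ in adapted coordinates, reduce via $(\kappa_j)_v|_0=0$, and arrive at
\[
\bV_j^2\kappa_j\big|_0=(\kappa_\nu-\kappa_j)\bigl[(\kappa_\nu-\kappa_j)(\kappa_j)_{vv}-\kappa_t(\kappa_j)_u\bigr].
\]

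Where you differ is in the passage to the second displayed form. The paper does this geometrically: it introduces the alternative principal vector field $\bar{\bV}_j=\bigl(-v(\wtil{N}_1-\kappa_j\wtil{G}),\,\wtil{M}-\kappa_j\wtil{F}\bigr)$, computes $\bar{\bV}_j^2\kappa_j$ at $0$ to obtain $\kappa_t(\kappa_j)_{vv}-(r_b/3-\kappa_j)(\kappa_j)_u$ directly, and then appeals to the fact that the ridge condition is independent of which nonvanishing principal vector one uses. You instead stay with the single $\bV_j$ and convert algebraically via the identity $(\kappa_\nu-\kappa_j)(r_b/3-\kappa_j)=\kappa_t^2$, which you read off from the characteristic equation $\kappa_j^2-2H\kappa_j+K=0$ together with $2H=\kappa_\nu+r_b/3$ and $K=\kappa_\nu r_b/3-\kappa_t^2$ at $0$. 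Your route is slightly more elementary and avoids introducing $\bar{\bV}_j$; the paper's route has the conceptual bonus of explaining \emph{why} the two forms coexist, namely that they arise from two natural choices of principal vector.
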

\begin{proof}
	Since $0$ is a ridge point of $f$, $(\kappa_j)_v(0)=0$ holds by Proposition \ref{prop:sc}. 
	Thus we have
	\begin{equation}\label{eq:VVk}
		\bV_j\bV_j\kappa_j=(\wtil{L}-\kappa_j\wtil{E})((\wtil{L}-\kappa_j\wtil{E})(\kappa_j)_{vv}-(\wtil{M}-\kappa_j\wtil{F}) (\kappa_j)_u)
	\end{equation}
	at the origin. Since $\kappa_t\neq0$, we have $\wtil{L}-\kappa_j\wtil{E}\neq0$. 
	This implies that $0$ is an at least second order $\bV_j$-ridge point of $f$ if and only if
	\begin{equation}\label{eq:ridgeordem2}
		(\wtil{L}-\kappa_j\wtil{E})(\kappa_j)_{vv}-(\wtil{M}-\kappa_j\wtil{F}) (\kappa_j)_u=0.
	\end{equation}
	By Lemma \ref{lem:invariant}, we have the first expression. 
	Moreover, using the formula $\kappa_j^2-2H\kappa_j+K=0$, 
	we note that the vector field 
	$\bar{\bV}_j=(-v(N_1-\kappa_j\wtil{G}),\wtil{M}-\kappa_j\wtil{F})$ 
	is also a principal vector field which does not vanish at $0$, and we have 
	\begin{equation}\label{eq:VVbark}
		\bar{\bV}_j\bar{\bV}_j\kappa_j
		=(\wtil{M}-\kappa_j\wtil{G})\left((\wtil{M}-\kappa_j\wtil{F})(\kappa_j)_{vv}-(N_1-\kappa_j\wtil{G})(\kappa_j)_u\right)=0
	\end{equation}
	at $0$. 
	By Lemma \ref{lem:invariant} again, we have the second expression. 
	Since being a ridge point does not depend on the principal vector field, we get the conclusion.
\end{proof}

\begin{lem}\label{lem:non-deg}
	Let $f$ be a frontal and $0$ a pure-frontal singular point of $f$ with $\kappa_t(0)\neq0$. 
	Suppose that $0$ is a $\bV_j$-ridge point $(j=1,2)$ of $f$. 
	Then $0$ is a non-degenerated singular point of $C_j$ $(j=1,2)$ if and only if 
	$r_c'(0)\neq0$ or $0$ is a first order $\bV_j$-ridge point.
\end{lem}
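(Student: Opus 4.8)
The plan is to exhibit an explicit identifier of singularities of $C_j$ near $0$ and then compute its differential there. First I would revisit the proof of Proposition~\ref{prop:sc}: in an adapted coordinate system it shows that the signed area density $\det((C_j)_u,(C_j)_v,\e_j)$ equals $(1-\rho_j\kappa_{j+1})\,\dfrac{\det(f_u,h,\nu)}{|\bx_j|}\,\bm{V}_j\rho_j$ (with the convention $\kappa_3=\kappa_1$). Since $1-\rho_j\kappa_{j+1}=\rho_j(\kappa_j-\kappa_{j+1})$, $\det(f_u,h,\nu)$ and $|\bx_j|$ are all nonvanishing near $0$ (using $\kappa_j\neq0$, and that $\kappa_t\neq0$ forces $\kappa_j\neq\kappa_{j+1}$), the function $\bm{V}_j\rho_j$ is a nowhere-vanishing function multiple of that area density, hence an identifier of singularities of $C_j$. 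Consequently $0$ is a non-degenerate singular point of $C_j$ if and only if $(\bm{V}_j\rho_j)_u(0)\neq0$ or $(\bm{V}_j\rho_j)_v(0)\neq0$.

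Next I would take an adapted coordinate system $(u,v)$, so that $\bm{V}_j=(-v(\wtil{M}-\kappa_j\wtil{F}),\,\wtil{L}-\kappa_j\wtil{E})$, and use the standing hypothesis that $0$ is a $\bm{V}_j$-ridge point, i.e. $(\kappa_j)_v(0)=0$ (equivalently $r_c(0)=0$ by \eqref{eq:diff-kj}). Writing $\bm{V}_j\rho_j=-v(\wtil{M}-\kappa_j\wtil{F})(\rho_j)_u+(\wtil{L}-\kappa_j\wtil{E})(\rho_j)_v$ and differentiating, the factors $v$ and $(\rho_j)_v$ both vanish at $0$, so almost every term drops; inserting $\wtil{E}(0)=1$, $\wtil{F}(0)=0$, $\wtil{L}(0)=\kappa_\nu$, $\wtil{M}(0)=\kappa_t$ from Lemma~\ref{lem:invariant} leaves
\[(\bm{V}_j\rho_j)_u(0)=(\kappa_\nu-\kappa_j)(\rho_j)_{uv}(0),\qquad
(\bm{V}_j\rho_j)_v(0)=-\kappa_t(\rho_j)_u(0)+(\kappa_\nu-\kappa_j)(\rho_j)_{vv}(0).\]
Since $(\kappa_j)_v(0)=0$ we have $(\rho_j)_u=-(\kappa_j)_u/\kappa_j^{2}$, $(\rho_j)_{uv}(0)=-(\kappa_j)_{uv}(0)/\kappa_j^{2}$, $(\rho_j)_{vv}(0)=-(\kappa_j)_{vv}(0)/\kappa_j^{2}$; substituting into the second identity and comparing with \eqref{eq:VVk} gives
\[(\bm{V}_j\rho_j)_v(0)=-\frac{1}{\kappa_j^{2}(\kappa_\nu-\kappa_j)}\,\bm{V}_j\bm{V}_j\kappa_j(0).\]
As $\kappa_j\neq0$ and $\kappa_\nu\neq\kappa_j$, this partial is nonzero exactly when $\bm{V}_j\bm{V}_j\kappa_j(0)\neq0$, i.e. (since $0$ is already a $\bm{V}_j$-ridge point) exactly when $0$ is a first-order $\bm{V}_j$-ridge point.

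For the $u$-partial I would differentiate \eqref{eq:diff-kj}, regarded as the identity $(\kappa_j)_v(u,0)=(-1)^{j+1}\dfrac{r_c(u)}{48\sqrt{\Gamma(u,0)}}(\kappa_j(u,0)-\kappa_\nu(u))$ along the singular curve (exactly as the relations $H_v=r_c/48$, $K_v=\kappa_\nu r_c/24$ were used in the proof of Proposition~\ref{prop:ridge}), with respect to $u$ and evaluate at $0$; since $r_c(0)=0$, every term except the one differentiating $r_c$ vanishes and $(\kappa_j)_{uv}(0)=(-1)^{j+1}\dfrac{r_c'(0)}{48\sqrt{\Gamma}}(\kappa_j-\kappa_\nu)$ at $0$. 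Plugging this into $(\bm{V}_j\rho_j)_u(0)=-(\kappa_\nu-\kappa_j)(\kappa_j)_{uv}(0)/\kappa_j^{2}$ yields $(\bm{V}_j\rho_j)_u(0)=(-1)^{j+1}\dfrac{r_c'(0)}{48\kappa_j^{2}\sqrt{\Gamma}}(\kappa_\nu-\kappa_j)^{2}$ at $0$, which is nonzero exactly when $r_c'(0)\neq0$. Combining the two computations, $((\bm{V}_j\rho_j)_u(0),(\bm{V}_j\rho_j)_v(0))\neq(0,0)$ if and only if $r_c'(0)\neq0$ or $0$ is a first-order $\bm{V}_j$-ridge point, which is the asserted equivalence; the case $j=2$ differs only by the already recorded signs in \eqref{eq:diff-kj}.

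The computations are mechanical once one exploits the vanishing of $v$, $(\rho_j)_v$ and $r_c$ at $0$; the step requiring the most care is the $u$-differentiation of \eqref{eq:diff-kj}, where I must be sure that relation holds as an identity in $u$ along the $u$-axis (not merely at the point $0$) and must track which terms survive after setting $r_c(0)=0$ — but this is entirely parallel to the derivation already used in the proof of Proposition~\ref{prop:ridge}.
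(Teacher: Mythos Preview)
Your proposal is correct and follows essentially the same approach as the paper: both identify $\bm{V}_j\rho_j$ as an identifier of singularities for $C_j$, compute $(\bm{V}_j\rho_j)_u(0)$ via $(\kappa_j)_{uv}(0)$ and the $u$-derivative of \eqref{eq:diff-kj} to obtain the $r_c'$ factor, and relate $(\bm{V}_j\rho_j)_v(0)$ to $\bm{V}_j\bm{V}_j\kappa_j(0)$ (the paper does the latter via $\bm{V}_j(\bm{V}_j\rho_j)=V_{12}(\bm{V}_j\rho_j)_v$ at $0$, you via direct expansion and comparison with \eqref{eq:VVk}, which amounts to the same thing since $V_{12}=\kappa_\nu-\kappa_j$). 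Your explicit check that \eqref{eq:diff-kj} holds as an identity in $u$ along the $u$-axis before differentiating is well-placed and matches the paper's use of \eqref{eq:kuv}.
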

\begin{proof}
	We show the case of $j=1$. The case $j=2$ is similar.
	Take an adapted coordinate system $(u,v)$. 
	Let $\lambda^{C_1}$ be the signed area density function of $C_1$. 
	Since $0$ is a $\bV_1$-ridge point, 
	{$(\bV_1\rho_1)_u$ at $0$ can be calculated as 
		\begin{equation}\label{eq:lambdaCu}
			(\bV_1\rho_1)_u=(\wtil{L}-\kappa_1\wtil{E})\rho_{uv}=-\frac{(\kappa_\nu-\kappa_1)}{\kappa_1^2}(\kappa_1)_{uv}.
		\end{equation}
		By \eqref{eq:diff-kj} (see also \cite[(3.2)]{saji_tera}), we have
		\begin{equation}\label{eq:kuv}
			(\kappa_1)_{uv}=\frac{r_c'(\kappa_1-\kappa_\nu)}{48\sqrt{\Gamma}}
			\left(=\frac{r_c'(\kappa_1-\kappa_\nu)}{8\sqrt{(r_b-3\kappa_\nu)^2+36\kappa_t^2}}\right)
		\end{equation}
		at the origin, and hence it holds that 
		\[
		(\bV_1\rho_1)_u=\frac{r_c'(\kappa_1-\kappa_\nu)^2}{48\kappa_1^2\sqrt{\Gamma}}
		\left(=\frac{r_c'(\kappa_1-\kappa_\nu)^2}{8{\kappa_1^2}\sqrt{(r_b-3\kappa_\nu)^2+36\kappa_t^2}}\right)
		\]
		at the origin. 
		Moreover, setting $\bV=V_{11}\partial_u+V_{12}\partial_v$, $\bV_1(\bV_1\rho_1)$ at $0$ is computed as 
		\begin{equation}\label{eq:lambdaCv}
			\bV_1(\bV_1\rho_1)=V_{12}(\bV_1\rho_1)_v
		\end{equation}
		since $V_{11}=0$ at the origin. 
		On the other hand, it follows that 
		\[
		\bV_1(\bV_1\rho_1)=-\bV_1\left(\frac{\bV_1\kappa_1}{\kappa_1^2}\right)
		=-\frac{\bV_1(\bV_1\kappa_1)\kappa_1^2-2\kappa_1(\bV_1\kappa_1)^2}{\kappa_1^4}
		=-\frac{\bV_1(\bV_1\kappa_1)}{\kappa_1^2}
		\]
		holds at the origin because $\bV_1\kappa_1=0$ at $0$. 
		Thus $(\bV_1\rho_1)_v\neq0$ at $0$ if and only if $\bV_1(\bV_1\kappa_1)\neq0$ at $0${,}
		since $V_{12}=\kappa_\nu-\kappa_1\neq0$ at $0$. 
		Therefore we have $((\bV_1\rho_1)_u,(\bV_1\rho_1)_v)\neq(0,0)$ 
		if and only if $r_c'(0)\neq0$ or the origin is a first order $\bV_1$-ridge point of $f$. }
	%
	%
\end{proof}

\begin{prop}\label{prop:sk}
	Let $f\colon(\R^2,0)\to(\R^3,0)$ be a frontal and $0$ a pure-frontal singularity of $f$ with $\kappa_t(0)\neq0$. 
	Suppose that $0$ is a non-degenerated singular point of $C_j$ $(j=1,2)$. 
	Then $0$ is an at least second order $\bV_j$-ridge point of $f$ if and only if $0$ is 
	a singular point of the second kind of $C_i$.
\end{prop}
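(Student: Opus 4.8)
\emph{Proof proposal.}
The plan is to reduce the statement to a short identity already obtained inside the proof of Lemma~\ref{lem:non-deg}, once the hypotheses are translated into the right normal form. First I would take an adapted coordinate system $(u,v)$ on $(\R^2,0)$ and record what the assumptions give. Since $0$ is a singular point of $C_j$, Proposition~\ref{prop:sing_mean} yields $r_c=0$ at $0$, and then Proposition~\ref{prop:ridge} (via \eqref{eq:diff-kj}) shows that $0$ is a $\bV_j$-ridge point of $f$, i.e. $\bV_j\kappa_j=0$ at $0$. By Lemma~\ref{lemma:nullvectorCj} the principal vector field $\bV_j$ is a null vector field of $C_j$, and it is nonvanishing near $0$ since $\bV_j(0)=(0,\kappa_\nu-\kappa_j)\neq\0$ because $\kappa_t(0)\neq0$. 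Moreover, the proof of Proposition~\ref{prop:sc} shows that $\bV_j\rho_j$ differs from the signed area density of $C_j$ by a nowhere-vanishing factor, so $\bV_j\rho_j$ is an identifier of singularities of $C_j$ and $\rank dC_j(0)=1$ by Lemma~\ref{lemma:nullvectorCj}.

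Next I would use the standard description of the two kinds of non-degenerate singular points: if $\Lambda^{C_j}$ is any identifier of singularities of $C_j$ having a non-degenerate zero at $0$ and $\eta^{C_j}$ is a null vector field of $C_j$, then $0$ is a singular point of the second kind of $C_j$ if and only if $\eta^{C_j}\Lambda^{C_j}=0$ at $0$ (the condition not depending on the choice of $\Lambda^{C_j}$, since at a common zero multiplication by a nowhere-vanishing function does not affect the vanishing of $\eta^{C_j}\Lambda^{C_j}$). Indeed, the singular curve of $C_j$ through $0$ is tangent to $\Ker(d\Lambda^{C_j})_0$, which is one-dimensional by non-degeneracy, so its direction and $\eta^{C_j}(0)$ are linearly dependent precisely when $\eta^{C_j}$ annihilates $\Lambda^{C_j}$ at $0$. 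Taking $\Lambda^{C_j}=\bV_j\rho_j$ and $\eta^{C_j}=\bV_j$, this gives: $0$ is of the second kind of $C_j$ if and only if $\bV_j(\bV_j\rho_j)=0$ at $0$.

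Finally I would invoke the identity computed in the proof of Lemma~\ref{lem:non-deg}, namely that at the origin
\[
\bV_j(\bV_j\rho_j)=-\frac{\bV_j(\bV_j\kappa_j)}{\kappa_j^{2}},
\]
which uses only $\rho_j=1/\kappa_j$, the relation $\bV_j\kappa_j=0$ at $0$, and $\kappa_j(0)\neq0$. Hence $\bV_j(\bV_j\rho_j)=0$ at $0$ if and only if $\bV_j^{2}\kappa_j=0$ at $0$; combined with the already-known $\bV_j\kappa_j=0$ at $0$, this is exactly the statement that $0$ is an at least second order $\bV_j$-ridge point of $f$, and the proposition follows. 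The case $j=2$ is handled identically. I expect no real computational obstacle here, since the two derivatives of $\bV_j\rho_j$ were already worked out in Lemma~\ref{lem:non-deg}; the only step needing genuine care is the one I front-load, namely checking that the second-kind criterion is being applied to an honest identifier of singularities of $C_j$ (this is where $\bV_j\rho_j$ and Proposition~\ref{prop:sc} enter) together with an honest null vector field (this is where Lemma~\ref{lemma:nullvectorCj} enters).
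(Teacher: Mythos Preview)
Your proposal is correct and follows essentially the same line as the paper's proof: both arguments reduce the question to whether the null direction $\bV_j$ of $C_j$ is tangent to the singular curve $(\bV_j\rho_j)^{-1}(0)$, and both rely on the identity $\bV_j(\bV_j\rho_j)=-\bV_j(\bV_j\kappa_j)/\kappa_j^2$ at $0$ obtained in the proof of Lemma~\ref{lem:non-deg}. The only difference is one of packaging: the paper separates into cases according to which of $(\bV_j\rho_j)_u$ or $(\bV_j\rho_j)_v$ is nonzero, invokes the implicit function theorem to parametrize the singular curve explicitly, and then compares its tangent direction with $(0,1)$; you instead use the coordinate-free criterion that a non-degenerate singular point is of the second kind if and only if $\eta^{C_j}\Lambda^{C_j}=0$ at that point, which collapses the case analysis into a single line.
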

\begin{proof}
	{We show the case $j=1$. 
		Take an adapted coordinate system $(u,v)$. 
		Suppose $0$ is an at least second order $\bV_1$-ridge point of $f$, 
		that is, $\bV_1\kappa_1=\bV_1\bV_1\kappa_1=0$ holds at $0$. 
		Thus, by \eqref{eq:lambdaCv}, 
		we have $(\bV_1\rho_1)_v(0)=0$. 
		Since $0$ is a non-degenerated singularity of $C_1$, we have $(\kappa_1)_{uv}(0)\neq0$ by \eqref{eq:lambdaCu}.
		By the identity
		\[
		(\bV_1\rho_1)_u=-(\kappa_\nu-\kappa_1)\dfrac{(\kappa_1)_{uv}}{\kappa_1^2}\neq0
		\]
		at $0$ and the implicit function theorem, 
		there exists a regular curve $(g_1(t),t)$ such that $\bV_1\rho_1(g_1(t),t)=0$. 
		We also have 
		\[g_1'(t)=-\frac{(\bV_1\rho)_v}{(\bV_1\rho)_u}(g_1(t),t),\] 
		which vanishes at $0$ by \eqref{eq:lambdaCv}.
		In other words, the tangent vector to the singular curve of $C_1$ at $0$ is the vector $(0,1)$, 
		which is parallel to the null vector field of $C_1$ at $0$ (Lemma \ref{lemma:nullvectorCj}). 
		Therefore $0$ is a singular point of the second kind of $C_1$.}
	The case $j=2$ can be shown similarly.

	On the other hand, if $0$ is a first order $\bV_j$-ridge point $(j=1,2)$ of a frontal $f\colon(\R^2,0)\to(\R^3,0)$, 
	$0$ is a non-degenerate singular point of $C_j$ (Lemma \ref{lem:non-deg}), 
	in particular, $(\bV_j\rho_j)_v(0)\neq0$ holds. 
	Thus by the implicit function theorem, 
	there exists a curve $\beta_j(t)=(t,g_j(t))$ such that $g_j(0)=0$ and $\bV_j\rho_j(t,g_j(t))=0$. 
	The tangent vector of $(t,g_j(t))$ at $t=0$ is $(1,g_j'(0))$, where $g_j'(0)=-((\bV_j\rho_j)_u/(\bV_j\rho_j)_v)(0)$. 
	Since $(1,g_j'(0))$ is not parallel to the null direction $(0,1)$ of $C_j$, 
	the origin $0$ is a singular point of the first kind of $C_j$.
\end{proof}

By Proposition \ref{prop:sk}, $C_j$ ($j=1,2$) cannot be a $5/2$-cuspidal edge or a cuspidal cross cap  at $q$ 
when $q$ is a second order $\bV_j$-ridge point of $f$. 
In the following, we assume that $0$ is a singular point of the first kind of $C_j$. 
Then we give conditions that $C_j$ has a cuspidal cross cap at $0$ in terms of geometrical properties of the initial frontal.

\begin{thm}\label{thm:ccr}
	Let $f\colon(\R^2,0)\to(\R^3,0)$ be a frontal with pure-frontal singular point $0$. 
	Suppose that $0$ is a first order $\bV_j$-ridge point of $f$ 
	and a singular point of the first kind of $C_j$ $(j=1,2)$. 
	Then $C_j$ has a cuspidal cross cap at $0$ if and only if
	$r_c'\neq0$ and 
\[
\begin{aligned}
\hat{\rho}_j'\left(\kappa_t^2\kappa_\nu'+2\kappa_t\kappa_t' (\kappa_j-\kappa_\nu)+\frac{r_b'}{3}(\kappa_j-\kappa_\nu)^2\right)\\
\neq(\kappa_t^2+(\kappa_j-\kappa_\nu)^2)(\kappa_{j+1}-\kappa_j)(\kappa_\nu-\kappa_j)
\end{aligned}
\]
	hold at $0$, where $\hat{\rho}_j$ is the composition of $\rho_j$ and the singular curve $\beta(t)$ of $C_j$ 
	and $\hat{\rho}_j'$ is its derivative. 
\end{thm}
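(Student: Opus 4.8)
The plan is to apply the cuspidal cross cap criterion of Fukui--Suzuki--Umehara--Yamada \cite[Corollary 1.5]{fsuy} to the focal surface $C_j$, after reducing everything to a single derivative. Under the present hypotheses, $0$ is a non-degenerate singular point of $C_j$ by Lemma \ref{lem:non-deg}, it is of the first kind for $C_j$ by Proposition \ref{prop:sk}, and by Proposition \ref{prop:frontalness} $C_j$ is a frontal with unit normal $\e_j=\bx_j/|\bx_j|$ with $d\e_j(\bV_j)=0$ at $0$; in particular the function $\psi^{C_j}$ along the singular curve $\beta$ of $C_j$---defined as in \eqref{eq:psi}, with $\e_j$ in place of $\nu$ and $\bV_j$ in place of $\eta$---vanishes at $0$. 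Hence $C_j$ has a cuspidal cross cap at $0$ if and only if $(\psi^{C_j})'(0)\neq 0$, and the whole task is to express this in terms of the geometric invariants of $f$.

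To do so I would fix an adapted coordinate system $(u,v)$ for $f$ at $0$. Since $\e_j$ is parallel to $\bx_j$, in the determinant defining $\psi^{C_j}$ one may replace $\e_j$ by $\bx_j$, the scalar factors combining to $|\bx_j|^{-2}$, so that $\psi^{C_j}=|\bx_j|^{-2}\Psi$ with $\Psi(t)=\det(\sigma'(t),\bx_j(\beta(t)),d\bx_j(\bV_j)|_{\beta(t)})$, where $\sigma=C_j\circ\beta$; as $|\bx_j(0)|\neq0$ and $\Psi(0)=0$, it suffices to show $\Psi'(0)\neq0$. By Proposition \ref{prop:sk} one may take $\beta(t)=(t,g_j(t))$ with $g_j(0)=0$, and by \eqref{eq:lambdaCu}, \eqref{eq:kuv} and the computation in Lemma \ref{lem:non-deg} the slope $g_j'(0)=-(\bV_j\rho_j)_u(0)/(\bV_j\rho_j)_v(0)$ is a nonzero multiple of $r_c'$ carrying $\bV_j\bV_j\kappa_j$ in its denominator. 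Moreover $(\rho_j)_v(0)=0$ (as $0$ is a $\bV_j$-ridge point), so $\hat{\rho}_j'=(\rho_j)_u(0)$, and by Lemma \ref{lemma:nullvectorCj} one has $\sigma'(0)=(C_j)_u(0)=(1-\rho_j\kappa_\nu)f_u-\rho_j\kappa_t h+\hat{\rho}_j'\nu$.

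Next I would expand $\Psi'(0)$ by the Leibniz rule. Since $r_c(0)=0$, equation \eqref{eq:x1_v} gives $(\bx_j)_v(0)=\tfrac{\wtil{G}_v}{2}\bx_j(0)$, so $d\bx_j(\bV_j)|_0$ is parallel to $\bx_j(0)$ and the only surviving contributions to $\Psi'(0)$ are determinants of $\sigma'(0)$ and $\bx_j(0)$ against $(\bx_j)_u(0)$, $(\bx_j)_{uv}(0)$, $(\bx_j)_{vv}(0)$. These second-order jets are obtained from $\bx_j=-(\wtil{M}-\kappa_j\wtil{F})f_u+(\wtil{L}-\kappa_j\wtil{E})h$ using Lemmas \ref{lem:weingarten} and \ref{lem:invariant}, the relations \eqref{eq:frenet} (with $f_{uv}=\nu_v=0$, $\wtil{L}_v=0$, $\wtil{E}_v=0$ on the $u$-axis), and \eqref{eq:diff-kj}, \eqref{eq:kuv}; in particular, differentiating the $u$-axis version of \eqref{eq:x1_v} yields
\[
(\bx_j)_{uv}(0)=\frac{\wtil{G}_v}{2}(\bx_j)_u(0)+\frac{\wtil{G}_{uv}}{2}\bx_j(0)+\frac{r_c'(\kappa_\nu-\kappa_j)}{48\sqrt{\Gamma}}h,
\]
so that, once the determinants are taken, all contributions not proportional to $r_c'$ cancel identically. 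In the surviving terms the factor $\bV_j\bV_j\kappa_j=(\kappa_\nu-\kappa_j)((\kappa_\nu-\kappa_j)(\kappa_j)_{vv}-\kappa_t(\kappa_j)_u)$ in the denominator of $g_j'(0)$ cancels against the $(\kappa_j)_{vv}$- and $(\kappa_j)_u$-dependence coming from $(\bx_j)_{vv}(0)$ and $(\bx_j)_u(0)$; substituting $\wtil{L}=\kappa_\nu$, $\wtil{M}=\kappa_t$, $\wtil{N}_1=r_b/3$ on the $u$-axis, I expect to reach
\[
\Psi'(0)=c\,r_c'\left[\hat{\rho}_j'\left(\kappa_t^2\kappa_\nu'+2\kappa_t\kappa_t'(\kappa_j-\kappa_\nu)+\tfrac{r_b'}{3}(\kappa_j-\kappa_\nu)^2\right)-(\kappa_t^2+(\kappa_j-\kappa_\nu)^2)(\kappa_{j+1}-\kappa_j)(\kappa_\nu-\kappa_j)\right]
\]
at $0$, for a nonzero constant $c$ (the coefficient of $\hat{\rho}_j'$ being $\tfrac{d}{du}[\wtil{L}\kappa_t^2-2\wtil{M}\kappa_t(\kappa_\nu-\kappa_j)+\wtil{N}_1(\kappa_\nu-\kappa_j)^2]$ with $\kappa_t,\kappa_\nu,\kappa_j$ frozen at $0$, and $\kappa_t^2+(\kappa_j-\kappa_\nu)^2=|\bx_j(0)|^2$). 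Since $\kappa_{j+1}\neq\kappa_j$ and $\kappa_\nu\neq\kappa_j$ at $0$ (because $\kappa_t\neq0$), $\Psi'(0)\neq0$ is equivalent to $r_c'\neq0$ together with the stated inequality; the other value of $j$ is handled identically with $\kappa_1$ and $\kappa_2$ interchanged.

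The main obstacle is this last step. The second-order jet computation of $\bx_j$ (equivalently of $\e_j$) along the singular curve of $C_j$ is long, and two cancellations must be organised with care: first that of all the terms not proportional to $r_c'$ (via the displayed formula for $(\bx_j)_{uv}(0)$), and then that of the $(\kappa_j)_{vv}$- and $(\kappa_j)_u$-terms against the denominator $\bV_j\bV_j\kappa_j$ of $g_j'(0)$; moreover, pinning down the exact coefficient $\kappa_t^2\kappa_\nu'+2\kappa_t\kappa_t'(\kappa_j-\kappa_\nu)+\tfrac{r_b'}{3}(\kappa_j-\kappa_\nu)^2$ and the factor $\kappa_t^2+(\kappa_j-\kappa_\nu)^2$ is where the bookkeeping becomes delicate.
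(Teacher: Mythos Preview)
Your strategy is the same as the paper's: apply the criterion of \cite[Corollary~1.5]{fsuy} to $C_j$, verify $\psi^{C_j}(0)=0$ via Proposition~\ref{prop:frontalness}, and reduce everything to computing $(\psi^{C_j})'(0)$. The identification of the cancellation between the $(\bx_j)_{uv}$ contribution and the $(\kappa_j)_{vv},(\kappa_j)_u$ terms carried by $g_j'(0)$ is also exactly the mechanism the paper uses.

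Where you diverge is in the bookkeeping device. You stay in a general adapted coordinate system and plan to absorb the extra terms (those carrying $\wtil G_v$, $\wtil F_v$, $h_v$) through the parallelism $(\bx_j)_v(0)=\tfrac{\wtil G_v}{2}\bx_j(0)$ and its $u$-derivative. The paper instead passes to the Honda--Saji/Oset-Sinha--Saji normal form \eqref{eq:normalform}, in which $h_v$, $h_{uv}$ vanish on the $u$-axis and hence $\wtil F_v=\wtil G_v=\wtil M_v=0$ there; this kills precisely the terms you would otherwise have to cancel by hand. The normal form also lets the paper read off $\wtil L_{vv}$ and $\wtil M_{vv}$ directly (Lemma~\ref{lem:lvvb}), which is where the invariants $\kappa_t'$, $\kappa_\nu'$ and $r_b'$ enter; in your framework you would need analogous formulas in general adapted coordinates, and this is the step you flag as delicate. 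Your simplification $\psi^{C_j}=|\bx_j|^{-2}\Psi$ (replacing $\e_j$ by $\bx_j$) is a genuine economy the paper does not use---it spares you the $|\bx_j|$-derivatives---but without the normal form the second-order jet of $\bx_j$ in the $v$-direction still involves $\wtil L_{vv}$, $\wtil M_{vv}$, $\wtil F_{vv}$, $h_{vv}$, $f_{uvv}$, and organizing these into the invariants $\kappa_\nu'$, $\kappa_t'$, $r_b'$ is the real content of the computation. In short: your plan is correct and coincides with the paper's at the level of strategy and of the key cancellation; the paper's extra ingredient is the normal form \eqref{eq:normalform}, which replaces your anticipated cancellations with outright vanishing and supplies the missing formulas for $\wtil L_{vv}$, $\wtil M_{vv}$.
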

\begin{proof}
	We show the case for $j=1$. 
	Since $f$ is not a front, we may assume $f$ as in 
\begin{equation}\label{eq:normalform}
	f(u,v)=(u, u^2 a_2(u) + v^2/2, u^2 a_3(u) + v^2 c_2(u) + v^4 c_4(u) + v^5 c_5(u, v)),
\end{equation}
with $c_2(0)=0$ (cf. \cite[Proposition 3.9]{hs} and \cite[Proposition 2.1]{os}). 
Since $0$ is a singular point of $C_j$, $0$ is not a $5/2$-cuspidal edge of $f$. 
Thus $c_5$ as in \eqref{eq:normalform} satisfies $c_5(0,0)=0$ (cf. \cite{hks,hs}).

We note that, in this case, 
the singular set of $f$ is the $u$-axis and $\eta=\partial_v$ is a null vector field of $f$. 
Moreover, the functions in \eqref{eq:fundamentals} can be defined in the same way. 
Since $0$ is a singular point of the first kind of $C_j$, 
the singular curve $\beta(t)$ of $C_1$ can be represented as $\beta(t)=(t,g(t))$ with $g(0)=0$ 
by the proof of Proposition \ref{prop:sk}. 
It is well known (see \cite[Corollary 1.5]{fsuy}, for example) that, 
	with the notations we used so far, $C_1$ is a cuspidal crosscap at $0$
	if and only if $\psi_{C_1}(0)=0$ and $\psi_{C_1}'(0)\neq0$, where 
	\[
	\psi_{C_1}(t)=\det(\hat{\beta}_1'(t),\hat{\bm{e}}_1(t),d{\e_1}_{\beta(t)}(\bV_1)),
	\]
	$\hat{\beta}_1(t)=C_1(\beta(t))$, $\hat{\bm{e}}_1(t)=\bm{e}_1(\beta(t))$ and 
	$\bV_1$ is a null vector field of $C_1$. 
	Since $d\e_1(\bV_1)(0)=0$, $\psi_{C_1}(0)=0$ holds. 
	
	We calculate $\psi_{{C}_1}'$. 
	First we consider $\hat{\beta}_1'\times\hat{\e}_1$ at $0$. 
	Along $\beta(t)$, we have 
	\[\hat{\beta}_1'=(C_1)_u+(C_1)_vg'(t),\]
	and hence $\hat{\beta}_1'(0)=(C_1)_u(0)$ holds since $(C_1)_v=0$ at $0$. 
	Using Lemmas \ref{lem:weingarten} and \ref{lem:invariant}, it follows that 
	\[(C_1)_u=(1-\kappa_\nu\rho_1)f_u-\rho_1\kappa_th+(\rho_1)_u\nu
	=\rho_1((\kappa_1-\kappa_\nu)f_u-\kappa_th)+(\rho_1)_u\nu\]
	holds at $0$. 
	Moreover, $\hat{\e}_1$ can be written as 
	\[\hat{\e}_1=\frac{-\kappa_tf_u+(\kappa_\nu-\kappa_1)h}{\Delta_1}\]
	at $0$, where $\Delta_1=\sqrt{\kappa_t^2+(\kappa_1-\kappa_\nu)^2}$, 
	and hence we have 
	\[\hat{\beta}_1'\times\hat{\e}_1=-\frac{1}{\Delta_1}
	\left((\rho_1)_u((\kappa_\nu-\kappa_1)f_u+\kappa_th)+\rho_1\Delta_1^2\nu\right)\]
	at $0$. 
	
	We next consider the derivative of the curve $\alpha(t)=d{\e_1}_{\beta(t)}(\bV_1)$. 
	Since $\alpha(t)$ can be represented as 
	\[
	\alpha(t)=V_{11}(\beta(t))(\e_1)_u(\beta(t))+V_{12}(\beta(t))(\e_1)_v(\beta(t)),
	\] 
	we have 
	\[
	\begin{aligned}
	\alpha'(t)&=((V_{11})_u+(V_{11})_vg'(t))(\e_1)_u(\beta(t))\\&+V_{11}((\e_1)_{uu}(\beta(t))+(\e_1)_{uv}(\beta(t))g'(t))\\
	&+((V_{12})_u+(V_{12})_vg'(t))(\e_1)_v(\beta(t))\\&+V_{12}(\beta(t))((\e_1)_{uv}(\beta)(t)+(\e_1)_{vv}g'(t)).
	\end{aligned} 
\]
	Here $(\e_1)_v=0$ and $V_{11}=(V_{11})_u=0$ hold at $0$. 
	Hence it follows that 
	\begin{equation}\label{eq:dalphab}
	\alpha'=g'\left((V_{11})_v(\e_1)_u+V_{12}(\e_1)_{vv}\right)+V_{12}(\e_1)_{uv}
	\end{equation}
	holds at $0$.   
	
	Since $0$ is a pure-frontal singular point of $f$ and $f_v(u,0)=0$,
	$f_v(u,v)=vh(u,v)$ and $\nu_v(u,v)=v\nu_1(u,v)$, for smooth maps $h$ and $\nu_1$. 
	Thus we have $\wtil{E}_v(u,0)=\wtil{L}_v(u,0)=0$. On the other hand, from \eqref{eq:normalform}, it holds

\begin{equation}\label{eq:h}
	h(u,v)=(0,1,2c_2(u)+v^2\left(4c_4(u)+v\left(5c_5(u,v)+v(c_5)_v(u,v)\right)\right)).
\end{equation}	
Therefore we get $h_v(u,0)=h_{uv}(u,0)=0$ and consequently, $\wtil{M}_v(u,0)=\wtil{F}_v(u,0)=\wtil{G}_v(u,0)=0$.
	Thus we have
	$(\bx_1)_v=0$ at $0$ 
	since $(\kappa_1)_v(0,0)=0$. 
	We note that $(V_{11})_v=-\kappa_t$ and $V_{12}=\kappa_\nu-\kappa_1$ hold at $0$. 
	
	Since $\e_1=\bx_1/|\bx_1|$, we have 
	\[
	\begin{aligned}
	(\e_1)_u&=\frac{(\bx_1)_u}{|\bx_1|}-\e_1\frac{(|\bx_1|)_u}{|\bx_1|},\quad 
	(\e_1)_v=0,\\
	(\e_1)_{uv}&=(\e_1)_{vu}=\frac{(\bx_1)_{vu}}{|\bx_1|}-\e_1\left(\frac{(|\bx_1|)_v}{|\bx_1|}\right)_u,\\
	(\e_1)_{vv}&=\frac{(\bx_1)_{vv}}{|\bx_1|}-\e_1\left(\frac{(|\bx_1|)_v}{|\bx_1|}\right)_v
	\end{aligned}
	\] at $0$.
	By a direct calculation, we get
	\[(\bx_1)_u=-(\kappa_t'+\kappa_s(\kappa_\nu-\kappa_1))f_u+((\kappa_\nu'-(\kappa_1)_u)-\kappa_s\kappa_t)h-\kappa_1\kappa_t\nu\]
	at $0$. 
	Thus we see that
	\begin{equation}\label{eq:det1}
	\det(\hat{\beta}_1',\hat{\e}_1,(\e_1)_u)
	=\frac{1}{\Delta_1^2}\left((\rho_1)_u\left(\kappa_s\Delta_1^2+\kappa_t'(\kappa_\nu-\kappa_1)+\kappa_t((\kappa_1)_u-\kappa_\nu')\right)+\kappa_t\Delta_1^2\right)
	\end{equation}
	at $0$. 
	
	By $(\bx_1)_v(0,0)=0$, we obtain $\det(\hat{\beta}_1',\hat{\e}_1,(\bx_1)_v)=0$ at 0, 
	and hence $\det(\hat{\beta}_1',\hat{\e}_1,(\e_1)_v)=0$ at $0$. 
	Moreover, since $\wtil{E}_v, \wtil{F}_v, \wtil{G}_v, \wtil{L}_v$ and $\wtil{M}_v$ vanish on the $u$-axis, 
	then $\wtil{E}_{uv},\wtil{F}_{uv},\wtil{G}_{uv},\wtil{L}_{uv}$ and $\wtil{M}_{uv}$ also vansih on the $u$-axis. Using these facts and $f_{uuv}(0)=0,$ we have
	\[
	(\bx_1)_{uv}=-(\kappa_1)_{uv} h
	\]
	holds at $0$. 
	Therefore we get
	\[
	\begin{aligned}
	&\det(\hat{\beta}_1',\hat{\e}_1,(\bx_1)_{uv})=\frac{\rho_u \kappa_t (\kappa_1)_{uv}}{\Delta_1}
	\end{aligned}
\]
	at $0$. 
	Since $(|\bx_1|)_v=\inner{\bx_1}{(\bx_1)_v}/|\bx_1|=0$
	at $0$ and by \eqref{eq:kuv}, we have 
	\begin{equation}\label{eq:det2b}
	\det(\hat{\beta}_1',\hat{\e}_1,(\e_1)_{uv})
	=\frac{(\rho_1)_u\kappa_tr_c'(\kappa_1-\kappa_\nu)}{48\Delta_1^2\sqrt{\Gamma}}
	\end{equation}
	at $0$. 
	
	We next consider $(\bx_1)_{vv}$. 
	By using $\wtil{F}=\wtil{F}_v=\wtil{L}_v=\wtil{E}_v=(\kappa_1)_v=h_v=h_{vv}=0$, $\wtil{G}=\wtil{E}=1$ and $f_{uv}=0$ at $0$, we have 
	\begin{equation}\label{eq:xv1b}
	(\bx_1)_{vv}=-(\wtil{M}_{vv}-\kappa_1\wtil{F}_{vv})f_u-\wtil{M}f_{uvv}+(\wtil{L}_{vv}-(\kappa_1)_{vv}-\kappa_1\wtil{E}_{vv})h{+(\wtil{L}-\kappa_1\wtil{E})h_{vv}}
	\end{equation}
	at $0$. 
	By $f_v=vh$, \eqref{eq:h} and \eqref{eq:normalform}, we see that {$h_{vv}=\frac{r_b}{3}\nu$ and
		$f_{uvv}=h_u=\kappa_t\nu$ hold at $0$, which implies that $\wtil{F}_{vv}=0$}. 
	Moreover, we have the following.
	\begin{lem}\label{lem:lvvb}
		Under the above setting, we have 
		\[
		\wtil{L}_{vv}=\kappa_t'+\kappa_s\left(\kappa_\nu-\frac{r_b}{3}\right),\quad
		\wtil{M}_{vv}=\frac{r_b'}{3}+2\kappa_s \kappa_t
		\]
		at $0$. 
	\end{lem}
	\begin{proof}
	By definition of $\wtil{L}$ and $\wtil{M}$, and $f_{uv}=\nu_{uv}=h_v=h_{vv}=0$ at $0$, we have
	\[
	\wtil{L}_{vv}=-\inner{f_{uvv}}{\nu_u}-\inner{f_u}{\nu_{uvv}} \ \ \text{and}\ \  \wtil{M}_{vv}=-\inner{h}{\nu_{uvv}}
	\]
	at 0. By \eqref{eq:normalform}, $f_{uvv}=\kappa_t\nu$, which is orthogonal to $\nu$. Thus, $\wtil{L}_{vv}=-\inner{f_u}{\nu_{uvv}}$. On the other hand, since $\nu=\frac{\bar{\nu}}{|\bar{\nu}|}$ with $\bar{\nu}=f_u\times h$ and $\bar{\nu}_v(0)=0,$ it holds
	\[
	\nu_{uuv}=\frac{\bar{\nu}_{uvv}}{|\bar{\nu}|}-\frac{\bar{\nu}\inner{\bar{\nu}}{\bar{\nu}_{vvv}}}{|\bar{\nu}|^3}.
	\]
	By \eqref{eq:normalform} and direct calculations, 
\[
	\begin{aligned}
	\bar{\nu}=&\left(-2 u a_3 + 2u^2 c_2 a_2' + 
4 u^2 v^2 c_4 a_2'+5u^2v^3 c_5a_2'-u^2a_3'-v^2c_2'-v^4c_4'\right.\\
&\left.+u^2v^4a_2'(c_5)_v
+2u a_2(2c_2+4v^2c_4+5v^3 c_5+v^4(c_5)_v)-v^5 (c_5)_u,\right.\\
&\left.-2c_2-v^2(4c_4+v(5c_5+v (c_5)_v),1\right),
\end{aligned}
\]
where $'=d/du$. 
Thus we get 
	\[\bar{\nu}_{uvv}(0)=\left(16a_2(0)c_4(0)-2c_2''(0),-8c_4'(0),0\right),\quad 
	\bar{\nu}_{vvv}(0)=(0,0,0)\]
We note that $2a_2=\kappa_s$, $2a_3=\kappa_\nu$, $2c_2''=\kappa_s\kappa_\nu+\kappa_t'$, 
$24c_4=r_b$ and $24c'_4=r_b'+6\kappa_s\kappa_t$ hold at $0$ (cf. \cite[Page 508]{hs}). 
Hence we obtain
\[
\bar{\nu}_{uvv}=\left(-\kappa_t'+\kappa_s\left(\frac{r_b}{3}-\kappa_\nu\right),-\frac{r_b'}{3}-2\kappa_s\kappa_t,0\right) 
\ \ \text{and}\ \ \bar{\nu}_{vvv}=0
\]
at $0$. 
Now, since $f_u(0)=(1,0,0)$, $h(0)=(0,1,0)$, and $\nu(0)=(0,0,1)$, we have the result. 
\end{proof}
	We proceed calculations. 
	By Lemma \ref{lem:lvvb}, $(\bx_1)_{vv}$ as in \eqref{eq:xv1b} can be written as 
	\[
	\begin{aligned}
	(\bx_1)_{vv}
	&=-\left(\frac{r_b'}{3}+2\kappa_s\kappa_t\right)f_u
	+\left(\kappa_t'+\kappa_s\left(\kappa_\nu-\frac{r_b}{3}\right)-(\kappa_1)_{vv}\right)h\\
	&\hspace{7cm}+\left(\frac{r_b}{3}(\kappa_\nu-\kappa_1)-\kappa_t^2\right)\nu
		\end{aligned}
	\]
	at $0$. 
	Thus one can see that
	\[
	\begin{aligned}
	-\Delta_1&\det(\hat{\beta}_1',\hat{\e}_1,(\bx_1)_{vv})=-\Delta_1\inner{\hat{\beta}_1'\times\hat{\e}_1}{(\bx_1)_{vv}}\\
	=&(\rho_1)_u\left(\kappa_1 \left(2\kappa_s\kappa_t+\frac{r_b'}{3}\right)-\frac{\kappa_\nu r_b'}{3}
	-\kappa_t\left(-\kappa_t'+\kappa_s\left(\kappa_\nu+\frac{r_b}{3}\right)+(\kappa_1)_{vv}\right)\right)\\
	&\hspace{7cm}+\rho_1\Delta_1^2\left(\frac{r_b}{3}(\kappa_\nu-\kappa_1)-\kappa_t^2\right)
	\end{aligned}
	\]
	holds at $0$. 
	Hence we have 
\begin{equation}\label{eq:det3b}	
	\begin{aligned}
	&-\Delta_1^2\det(\hat{\beta}_1',\hat{\e}_1,(\e_1)_{vv})\\
	&=(\rho_1)_u\left(\kappa_1 \left(2\kappa_s\kappa_t+\frac{r_b'}{3}\right)-\frac{\kappa_\nu r_b'}{3}
	-\kappa_t\left(-\kappa_t'+\kappa_s\left(\kappa_\nu+\frac{r_b}{3}\right)+(\kappa_1)_{vv}\right)\right)\\
	&\hspace{7.5cm}+\rho_1\Delta_1^2\left(\frac{r_b}{3}(\kappa_\nu-\kappa_1)-\kappa_t^2\right)
	\end{aligned}
\end{equation}
	at $0$. 
	By \eqref{eq:det1}, and  
	\eqref{eq:det3b}, 
	 we have
	\begin{equation}\label{eq:det4b}
	\begin{aligned}
	&g'\det(\hat{\beta}_1',\hat{\e}_1,(V_{11})_v(\e_1)_u+V_{12}(\e_1)_{vv})\\
	&=g'((V_{11})_v\det(\hat{\beta}_1',\hat{\e}_1,(\e_1)_u)+V_{12}\det(\hat{\beta}_1',\hat{\e}_1,(\e_1)_{vv}))\\
	&=g'(-\kappa_t\det(\hat{\beta}_1',\hat{\e}_1,(\e_1)_u)
	+(\kappa_\nu-\kappa_1)\det(\hat{\beta}_1',\hat{\e}_1,(\e_1)_{vv}))\\
	&=\frac{g'}{\Delta_1^2}\left((\rho_1)_uA_1+\rho\Delta^2_1A_2\right)
	&
	\end{aligned}
	\end{equation}
	at $0$, where 
	\[\begin{aligned}
		A_1&=-\kappa_t (\bV_1\rho_1)_v \kappa_1^2+\kappa_s\kappa_t A_3+\frac{r_b'}{3}(\kappa_1-\kappa_\nu)^2+\kappa_t^2\kappa_\nu'+2\kappa_t\kappa_t'(\kappa_1-\kappa_\nu),\\ A_2&=\kappa_t^2(\kappa_\nu-2\kappa)-(\kappa_1-\kappa_\nu)^2\frac{r_b}{3}, \quad 
		A_3=-\Delta_1^2-(\kappa_\nu-\kappa_1)\left(2\kappa_1-\kappa_\nu-\frac{r_b}{3}\right),
	\end{aligned}
	\]
 and
  we use the identidy $(\bV_1\rho_1)_v \kappa_1^2=((\kappa_1)_{vv}(\kappa_1-\kappa_\nu)+\kappa_t (\kappa_1)_u)$ at $0$.
	Note that we can rewrite $A_2$ and $A_3$ as
	\[
	A_2=\left(\frac{\kappa_\nu r_b}{3}-\kappa_t^2\right)(2\kappa_1-\kappa_\nu)-\dfrac{\kappa_1^2r_b}{3} \ \ \text{and}\ \ 
	A_3=\kappa_1\left((\kappa_1-\kappa_\nu)-\frac{r_b}{3}\right)-\kappa_t^2+\frac{r_b \kappa_\nu}{3}.
	\]
	Now, using the identidies $\kappa_\nu+\frac{r_b}{3}=2 H=\kappa_1+\kappa_2$ 
	and $-\kappa_t^2+\frac{r_b \kappa_\nu}{3}=K=\kappa_1\kappa_2$ at $0$, 
	we get  $A_3=0$ and $A_2=\kappa_1(\kappa_1-\kappa_2)(\kappa_\nu-\kappa_1)$ at $0$.
	
	On the other hand, we see that 
	\[
	V_{12}\det(\hat{\beta}_1',\hat{\e}_1,(\e_1)_{uv})
	=-\frac{(\rho_1)_u\kappa_tr_c'(\kappa_1-\kappa_\nu)^2}{48\Delta_1^2\sqrt{\Gamma}}
	\]
	holds at $0$. 
	Since 
	\[
	(\bV_1\rho_1)_u=\frac{r_c'(\kappa_1-\kappa_\nu)^2}{48\kappa_1^2\sqrt{\Gamma}}
	\]
	holds at $0$ and $g'(0)=-\dfrac{(\bV_1\rho_1)_u}{(\bV_1\rho_1)_v}$, we have 
	\[
	-\dfrac{g'(\rho_1)_u\kappa_t(V_1\rho_1)_v \kappa_1^2}{\Delta_1^2}
	=\frac{(V_1\rho_1)_v(\rho_1)_u\kappa_t\kappa_1^2}{\Delta_1^2}
	=-V_{12}\det(\hat{\beta}_1',\hat{\e}_1,(\e_1)_{uv})
	\]
	at $0$. 
	Therefore we obtain 
	\[
	\begin{aligned}
	&\psi_{{C}_1}'=\det(\hat{\beta}_1',\hat{\e}_1,\alpha')\\
	&=\frac{g'(0)}{\Delta_1^2}\Big((\rho_1)_u\Big(\kappa_t^2\kappa_\nu'+2\kappa_t\kappa_t' (\kappa_1-\kappa_\nu)+\frac{r_b'}{3}(\kappa_1-\kappa_\nu)^2\Big)\\&
	\hspace{7cm}{+}\Delta_1^2(\kappa_1-\kappa_2)(\kappa_\nu-\kappa_1)\Big)
	\end{aligned}
\]
	at $0$ by \eqref{eq:dalphab}, \eqref{eq:det2b} and \eqref{eq:det4b}. 
	Since $\hat{\rho}_1'=(\rho_1)_u+(\rho_1)_vg'$ and $(\rho_1)_v=0$ at $0$, we have the assertion by \cite[Corollary 1.5]{fsuy}. 
\end{proof}

\begin{thm}\label{thm:pure}
	Let $f\colon(\R^2,0)\to(\R^3,0)$ be a frontal and $0$ a pure-frontal singular point of $f$. 
	Suppose that the secondary cuspidal curvature $r_c$ identically vanishes 
	along the singular curve $\gamma$ for $f$ through $0$, and $0$ is a first order $\bm{V}_j$-ridge point of $f$. 
	Then $\gamma$ is also a singular curve for $C_j$ $(j=1,2)$ consisting of pure-frontal singular points of $C_j$. 
	In addition, the Gaussian and the mean curvature of $C_j$ are bounded near $0$. 
	\end{thm}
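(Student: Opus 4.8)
The plan is to deduce the frontality and pure-frontality of $C_j$ along $\gamma$ from the results already established, and then to obtain the boundedness of the curvatures directly from the expression \eqref{eq:KH}. Throughout I would fix an adapted coordinate system $(u,v)$ for $f$, so that $S(f)=\{v=0\}=\gamma$ and $\eta=\partial_v$. I first note that the hypothesis that $0$ be a first order $\bm{V}_j$-ridge point presupposes that $\bm{V}_j$ is defined, i.e. $\kappa_t(0)\neq0$; hence $\kappa_t$ does not vanish near $0$ along $\gamma$, and every point of $\gamma$ there is a pure-frontal singular point of $f$ (since $\psi\equiv0$ along $\gamma$ by definition and $\gamma'\parallel\partial_u$ is transverse to $\eta=\partial_v$).

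First I would show $\gamma$ is a singular curve of $C_j$. By the computation inside the proof of Proposition~\ref{prop:sc}, the signed area density of $C_j$ (relative to the unit normal $\e_j=\bx_j/|\bx_j|$) equals $\bm{V}_j\rho_j$ times a nowhere-vanishing factor, so $\bm{V}_j\rho_j$ is itself an identifier of singularities of $C_j$ and $S(C_j)=(\bm{V}_j\rho_j)^{-1}(0)$. Along $\gamma$ one has $\bm{V}_j\rho_j=(\wtil{L}-\kappa_j\wtil{E})(\rho_j)_v$; since $r_c\equiv0$ along $\gamma$, formula \eqref{eq:diff-kj} gives $(\kappa_j)_v\equiv0$, hence $(\rho_j)_v\equiv0$, hence $\bm{V}_j\rho_j\equiv0$ along $\gamma$, so $\gamma\subset S(C_j)$ and $(\bm{V}_j\rho_j)_u\equiv0$ along $\gamma$. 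On the other hand, the argument in the proof of Lemma~\ref{lem:non-deg}, combined with the hypothesis that $0$ is a \emph{first order} ridge point (so $\bm{V}_j^2\kappa_j\neq0$ at $0$), gives $(\bm{V}_j\rho_j)_v\neq0$ at $0$, hence near $0$ along $\gamma$. Thus the points of $\gamma$ near $0$ are non-degenerate singular points of $C_j$ and, by the implicit function theorem, $\gamma$ is exactly the singular curve of $C_j$ there.

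Next I would verify that these are pure-frontal singular points of $C_j$. By Lemma~\ref{lemma:nullvectorCj}, along $\gamma$ the null vector field of $C_j$ may be taken to be $\bm{V}_j$, and in the adapted coordinates $\bm{V}_j=(0,\kappa_\nu-\kappa_j)$ on $\{v=0\}$; as $\kappa_\nu-\kappa_j\neq0$ when $\kappa_t\neq0$, this is transverse to $\gamma'\parallel\partial_u$, so every point of $\gamma$ near $0$ is a singular point of the first kind of $C_j$. Moreover each such point is a pure-frontal singular point of $f$ with $\kappa_t\neq0$ that is also a singular point of $C_j$, so Proposition~\ref{prop:frontalness} applies there and shows $C_j$ is a frontal but not a front; equivalently, the function $\psi$ for $C_j$ defined as in \eqref{eq:psi} vanishes identically along $\gamma$. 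This is exactly the assertion that $\gamma$ is a singular curve of $C_j$ consisting of pure-frontal singular points.

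Finally, the boundedness of the curvatures follows at once: $C_j$ is a frontal with pure-frontal singular point $0$, so it admits an adapted coordinate system, and in such a system its Gaussian and mean curvatures are given by the quotients \eqref{eq:KH} whose numerators are $C^\infty$ and whose denominator is everywhere positive; hence the Gaussian and mean curvatures of $C_j$ extend to bounded $C^\infty$ functions near $0$. I expect the main obstacle to be the first step: establishing that $\bm{V}_j\rho_j$ is a genuine identifier of singularities of $C_j$ whose gradient is non-zero along $\gamma$ near $0$, so that $\gamma$ is the full singular curve of $C_j$ and not merely a subset of $S(C_j)$. Once that is in place, everything else follows by invoking Propositions~\ref{prop:sc}, \ref{prop:sing_mean}, \ref{prop:frontalness} and Lemmas~\ref{lemma:nullvectorCj}, \ref{lem:non-deg} pointwise along $\gamma$.
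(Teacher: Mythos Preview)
Your proof is correct and follows essentially the same route as the paper's: both identify $\gamma=S(C_j)$ via the identifier $\bm{V}_j\rho_j$ together with the first-order ridge hypothesis, and then deduce $\psi_{C_j}\equiv0$ along $\gamma$ from $r_c\equiv0$. The only cosmetic differences are that the paper writes out the explicit formula \eqref{eq:ev} for $(\e_j)_v$ (extracted from the proof of Proposition~\ref{prop:frontalness}) rather than invoking that proposition pointwise, and cites \cite{msuy} for the curvature boundedness rather than appealing directly to \eqref{eq:KH}.
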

\begin{proof}
	Let us take an adapted coordinate system $(u,v)$. 
	Then we show that $\psi_{C_j}$ $(j=1,2)$ vanishes along the singular curve of $C_j$. 
	By the definition of $\psi_{C_j}$, it is sufficient to show $d\e_j(\bV_j)=0$ along the singular curve for $C_j$. 
	We first remark that the $u$-axis is also a singular curve of $C_j$ because $(\bV_j\rho_j)=0$ along the $u$-axis by Proposition \ref{prop:ridge}. 
	Moreover, since $0$ is a first order $\bV_j$-ridge point, $((\bV_j\rho_j)_u,(\bV_j\rho_j)_v)\neq(0,0)$ at $0$. 
	Thus we get $S(f)=S(C_j)=\{v=0\}$. 
	
	By \eqref{eq:princ-vect}, 
	we have $d\e_j(\bm{V}_j)=(\kappa_\nu-\hat{\kappa}_j)(\e_j)_v$ along the $u$-axis, where $\hat{\kappa}_j(u)=\kappa_j(u,0)$. 
	By the proof of Proposition \ref{prop:frontalness}, it holds that 
	\begin{equation}\label{eq:ev}
		(\e_j)_v=\frac{(-1)^{j+1}r_c\kappa_t(\hat{\kappa}_j-\kappa_\nu)}{48\Delta_j^3\sqrt{\what{\Gamma}}}
		\left((\hat{\kappa}_j-\kappa_\nu)\hat{\gamma}'-\kappa_t\hat{h}\right)\quad 
		\left(\Delta_j=\sqrt{\kappa_t^2+(\hat{\kappa}_j-\kappa_\nu)^2}\right)
	\end{equation}
	along the $u$-axis, 
	where $\what{\Gamma}(u)=\Gamma(u,0)=H(u,0)^2-K(u,0)$. 
	Thus if $r_c$ vanishes identically along the $u$-axis, 
	then $d\e_j(\bm{V}_j)$ also vanishes. 
	This implies that $\psi_{{C}_j}$ vanishes along the $u$-axis, 
	and hence the $u$-axis consists of pure-frontal singular points for ${C}_j$. 
	Boundedness of the Gaussian and the mean curvature for $C_j$ follows from this result and 
	\cite[Proposition 3.8 and Theorem 3.9]{msuy}. 
\end{proof}

\begin{ex}\label{ex:helicoid}
Let $f\colon\R_{>0}\times\R\to\R^3$ be a $C^\infty$ map given by 
	\[f(u,v)=\left(-\cosh(\log u)\sin{v},\cosh(\log u)\cos{v},v\right),\]
	where $\R_{>0}=\{a\in\R\ |\ a>0\}$. 
	When we consider the ambient space as the Minkowski $3$-space $\R^3_1$ 
	with signature $(++-)$, 
	$f$ is a parametrization of the {\it maximal helicoid} (\cite{kobayashi}). 
	It is known that the set of singular points of $f$ is $S(f)=\{u=1\}$ and all singular points are fold singularities (cf. \cite{fruyy,fkkrsuyy}). 
	In fact, if we change parameter by $u=e^w$, then $f$ can be written as 
	\[f(w,v)=(-\cosh{w}\sin{v},\cosh{w}\cos{v},v).\]
	Further, $S(f)=\{w=0\}$ and $f$ satisfies $f(w,v)=f(-w,v)$.  
	
	We consider focal surfaces of $f$. 
	By direct calculations, we have 
	\[f_u=\frac{u^2-1}{2u^2}(-\sin{v},\cos{v},0),\quad 
	f_v=\left(-\frac{(1+u^2)\cos{v}}{2u},-\frac{(1+u^2)\sin{v}}{2u},1\right).\]
	Thus one can take a unit normal vector $\nu$ to $f$ as 
	\[\nu=\left(\frac{2u\cos{v}}{\sqrt{1+6u^2+u^4}},\frac{2u\sin{v}}{\sqrt{1+6u^2+u^4}},
	\frac{1+u^2}{\sqrt{1+6u^2+u^4}}\right).\]
	Using this $\nu$, we have principal curvatures $\kappa_1$ and $\kappa_2$ as follows:
	\[\kappa_1=-\frac{4u^2}{1+6u^2+u^4},\quad \kappa_2=\frac{4u^2}{1+6u^2+u^4}.\]
	These functions are of class $C^\infty$ even on $S(f)=\{u=1\}$. 
	Moreover, the reciprocals $\rho_1$, $\rho_2$ of them are also $C^\infty$ functions. 
	We note that $\im f$ is the subset of the right helicoid in $\R^3$. 
	So this can be considered as a minimal surface in $\R^3$ with fold singularities.  
	Setting $\delta=\sqrt{1+6u^2+u^4}$, focal surfaces $C_1$ and $C_2$ are written as 
	\[
	\begin{aligned}
		C_1&=\left(-\frac{{\delta}\cos{v}+(1+u^2)\sin{v}}{2u},-\frac{\delta\sin{v}-(1+u^2)\cos{v}}{2u},
		-\frac{\delta}{4}\left(1+\frac{1}{u^2}\right)+v\right),\\
		C_2&=\left(\frac{{\delta}\cos{v}-(1+u^2)\sin{v}}{2u},\frac{\delta\sin{v}+(1+u^2)\cos{v}}{2u},
		\frac{\delta}{4}\left(1+\frac{1}{u^2}\right)+v\right)
	\end{aligned}
	\]
	(see Figure \ref{fig:focals}). 
	
	We focus on $C_1$. 
	We note that the set $S(f)=\{u=1\}$ is also the set of singular points of $C_1$. 
	In fact, one can see that $(C_1)_u\times (C_1)_v=(u^4-1)\bm{a}(u,v)$ holds, 
	where $a(u,v)$ is a non-zero $\R^3$-valued function on $\R_{>0}\times\R$. 
	Moreover, since $C_1(1/u,v)=C_1(u,v)$ holds for any $(u,v)\in\R_{>0}\times\R$, 
	it might hold that {\it $C_1$ also has a fold singularity at $(1,v)$}. 
	(For $C_2$, we have the same property.)
	By direct calculations, a unit normal vector $\nu^{C_1}$ to $C_1$ can be taken as  
	\[\nu^{C_1}=\left(-\frac{(1+u^2)\cos{v}-\delta\sin{v}}{\sqrt{2}\delta},-\frac{(1+u^2)\sin{v}-\delta\cos{v}}{\sqrt{2}\delta},
	\frac{\sqrt{2}u}{\delta}\right).\]
Thus the Gaussian curvature $K^{C_1}$ and the mean curvature $H^{C_1}$ can be calculated as 
\[K^{C_1}=\frac{4u^4}{1+6u^2+u^4}>0,\quad 
H^{C_1}=-\frac{u}{\sqrt{2}(1+u^2)}(<0).\]
These are bounded $C^\infty$ functions on the source.
\begin{figure}[htbp]
	\centering
		\begin{tabular}{c}
			
			\begin{minipage}{0.3\hsize}
				\begin{center}
					\includegraphics[clip, width=2.75cm]{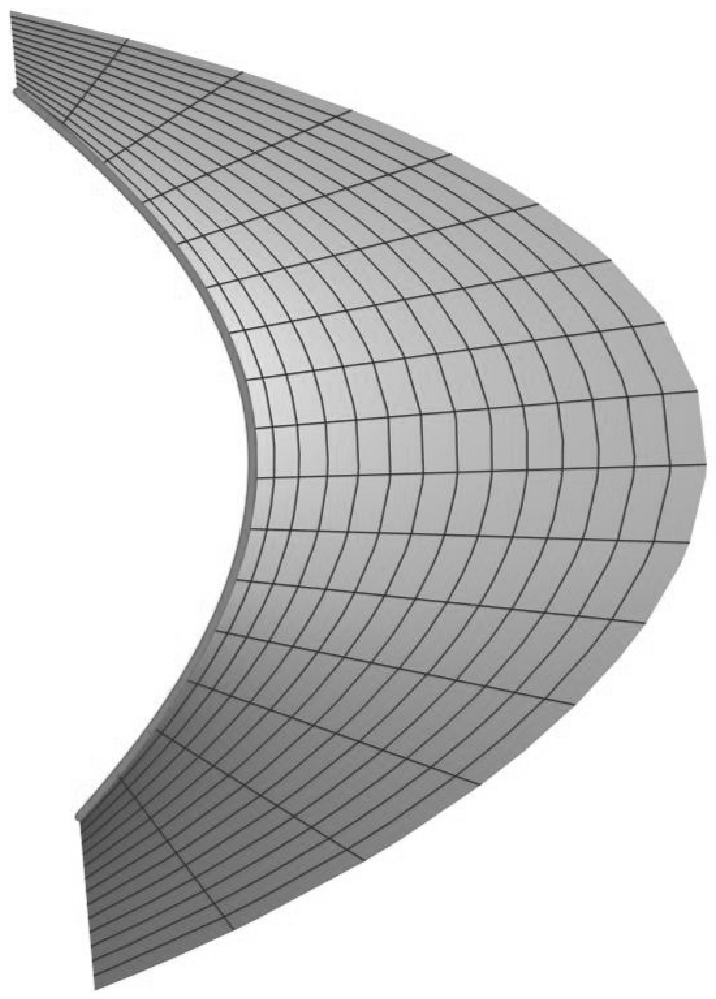}
				\end{center}
			\end{minipage}
			
			\begin{minipage}{0.3\hsize}
				\begin{center}
					\includegraphics[clip, width=3cm]{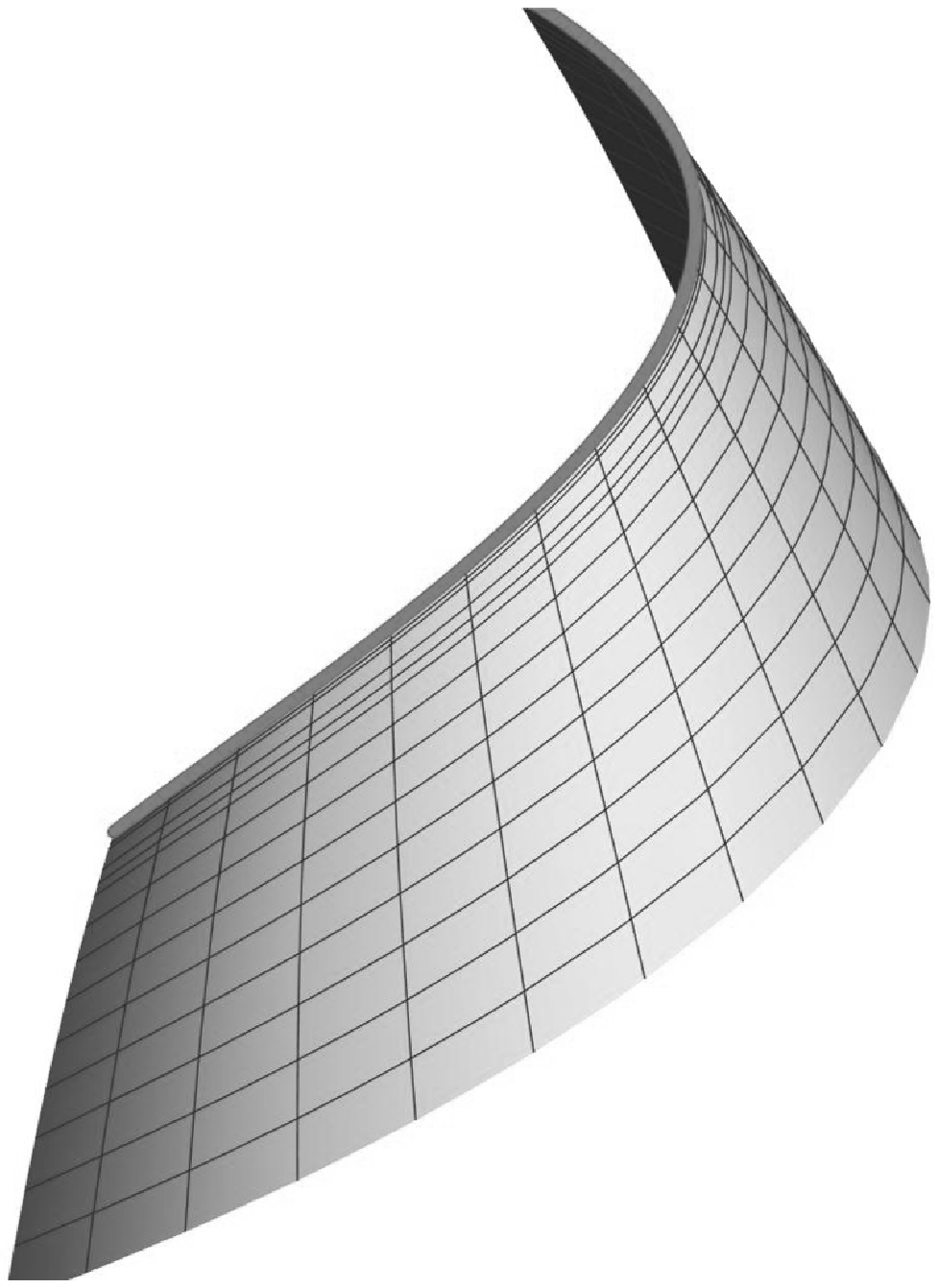}
				\end{center}
			\end{minipage}
			
			\begin{minipage}{0.3\hsize}
				\begin{center}
					\includegraphics[clip, width=3.5cm]{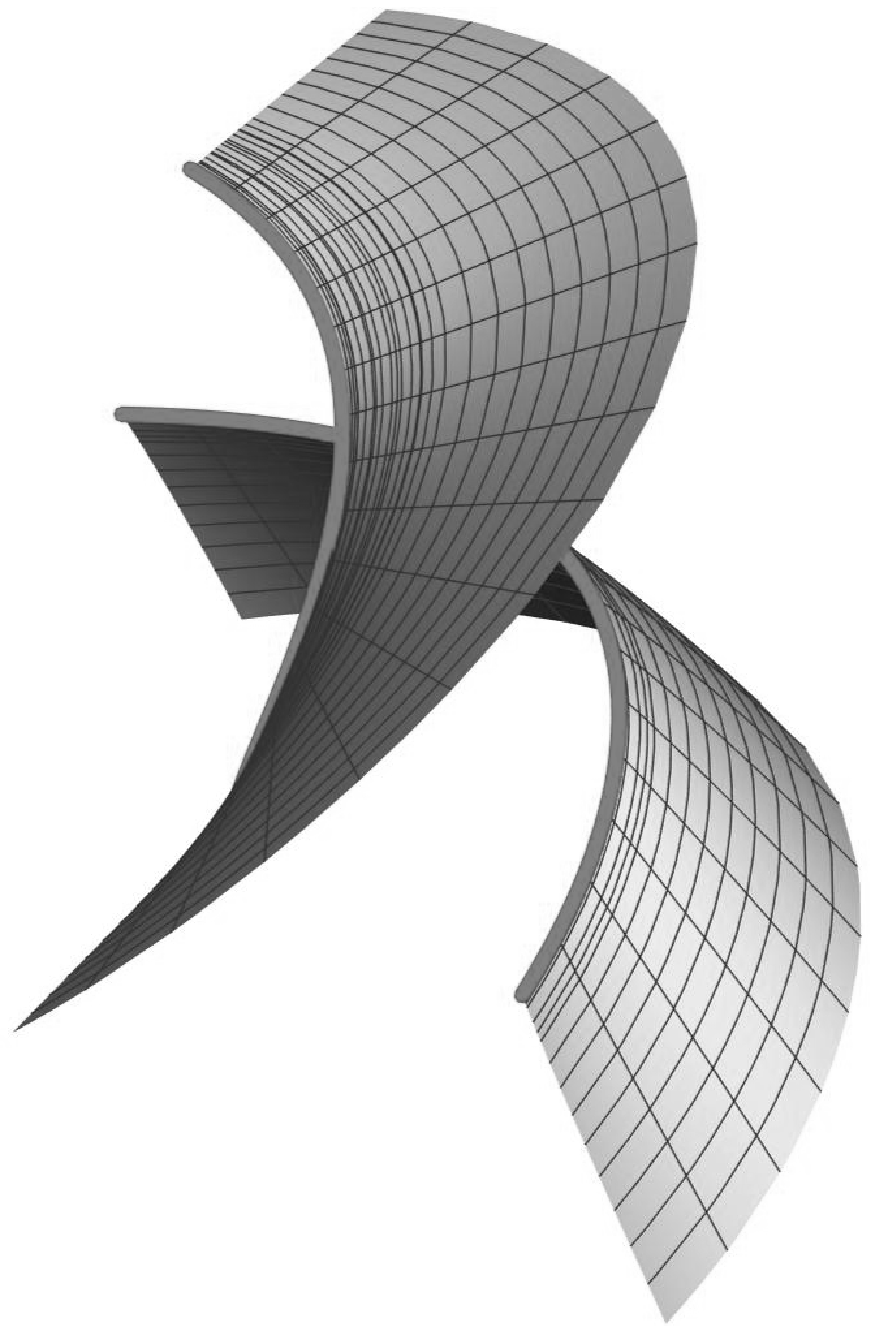}
				\end{center}
			\end{minipage}
		\end{tabular}
		\caption{The images of $f$ (left), $C_1$ (middle) and both of them (right) in Example \ref{ex:helicoid}.}
		\label{fig:focals}
\end{figure}
\end{ex}

\subsection{Curvatures of $C_j$ for $5/2$-cuspidal edges}
By Proposition \ref{prop:sing_mean}, when a frontal $f$ has a $5/2$-cuspidal edge at $0$, 
then focal surfaces $C_j$ $(j=1,2)$ are regular at $0$. 
Thus one can consider the Gaussian and mean curvature for $C_j$ at $0$. 
For the Gaussian curvature $K^{C_j}$ for $C_j$, we have the following assertion. 

\begin{thm}\label{thm:curvature}
	Let $f\colon(\R^2,0)\to(\R^3,0)$ be a frontal and $0$ a $5/2$-cuspidal edge of $f$. 
	Suppose that $\kappa_t\neq0$ at $0$. 
	 Then the Gaussian curvatures $K^{C_1}$ and $K^{C_2}$ of $C_1$ and $C_2$ are 
	written as 
	\[K^{C_j}=-\frac{\kappa_t^2\kappa_j^4}{({\kappa_t^2+(\kappa_\nu-\kappa_j)^2})^2}\]
	at $0$ $(j=1,2)$. 
	In particular, these are strictly negative at $0$.
\end{thm}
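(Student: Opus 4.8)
The plan is to evaluate the Gaussian curvature of $C_j$ at $0$ directly from its first and second fundamental forms in an adapted coordinate system $(u,v)$. Since $0$ is a $5/2$-cuspidal edge we have $r_c(0)\neq0$, so by Proposition \ref{prop:sing_mean} the point $0$ is a regular point of $C_j$, and $\kappa_j(0)\neq0$ by the standing hypothesis of Section \ref{sec:congruence}; throughout I abbreviate $\Delta_j^2=\kappa_t^2+(\kappa_\nu-\kappa_j)^2$, which is positive at $0$ because $\kappa_t(0)\neq0$. Writing $E^{C_j},\dots,N^{C_j}$ for the fundamental quantities of $C_j$ relative to the frame $\{(C_j)_u,(C_j)_v,\e_j\}$ with $\e_j=\bx_j/|\bx_j|$, the quantity to compute is $K^{C_j}(0)=\bigl(L^{C_j}N^{C_j}-(M^{C_j})^2\bigr)\big/\bigl(E^{C_j}G^{C_j}-(F^{C_j})^2\bigr)$ at $0$.

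First I would collect the first-order data at $0$. By Lemmas \ref{lem:weingarten} and \ref{lem:invariant} one has $\nu_u=-\kappa_\nu f_u-\kappa_t h$ and $\nu_v=0$ along the $u$-axis, so from $C_j=f+\rho_j\nu$ one gets
\[
(C_j)_u(0)=\rho_j\bigl((\kappa_j-\kappa_\nu)f_u-\kappa_t h\bigr)+(\rho_j)_u\nu,\qquad (C_j)_v(0)=(\rho_j)_v\,\nu,
\]
where $(\rho_j)_v(0)=-(\kappa_j)_v(0)/\kappa_j^2\neq0$ since $(\kappa_j)_v(0)\neq0$ by \eqref{eq:diff-kj}, $r_c(0)\neq0$ and $\kappa_\nu(0)\neq\kappa_j(0)$. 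Because $\{f_u,h,\nu\}$ is orthonormal along the $u$-axis, this yields $E^{C_j}G^{C_j}-(F^{C_j})^2=\rho_j^2\Delta_j^2(\rho_j)_v^2$ at $0$, and from \eqref{eq:vec-x} one has $\e_j(0)=\bigl(-\kappa_t f_u+(\kappa_\nu-\kappa_j)h\bigr)/\Delta_j$.

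Next come the second derivatives of $C_j$ at $0$, where the key simplifications are $f_{uv}(0)=0$, $\nu_v(0)=0$, and, because $\nu_v=v\nu_1$, also $\nu_{uv}(0)=0$. A short computation gives $(C_j)_{uv}(0)=(\rho_j)_v(-\kappa_\nu f_u-\kappa_t h)+(\rho_j)_{uv}\nu$ and, after using $\kappa_\nu+r_b/3=2H=\kappa_1+\kappa_2$ to rewrite $1-\rho_j r_b/3=\rho_j(\kappa_\nu-\kappa_{j+1})$,
\[
(C_j)_{vv}(0)=-\rho_j\kappa_t f_u+\rho_j(\kappa_\nu-\kappa_{j+1})h+(\rho_j)_{vv}\nu.
\]
Pairing with $\e_j(0)$ gives $M^{C_j}(0)=(\rho_j)_v\kappa_t\kappa_j/\Delta_j$, while $N^{C_j}(0)=\frac{\rho_j}{\Delta_j}\bigl(\kappa_t^2+(\kappa_\nu-\kappa_{j+1})(\kappa_\nu-\kappa_j)\bigr)$ vanishes, since $(\kappa_\nu-\kappa_{j+1})(\kappa_\nu-\kappa_j)=\kappa_\nu^2-2H\kappa_\nu+K$ and this plus $\kappa_t^2$ is zero by $\kappa_\nu+r_b/3=2H$ and $-\kappa_t^2+r_b\kappa_\nu/3=K$. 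Hence $L^{C_j}(0)$ is irrelevant and
\[
K^{C_j}(0)=-\frac{(M^{C_j})^2}{\rho_j^2\Delta_j^2(\rho_j)_v^2}=-\frac{\kappa_t^2\kappa_j^4}{\Delta_j^4}=-\frac{\kappa_t^2\kappa_j^4}{\bigl(\kappa_t^2+(\kappa_\nu-\kappa_j)^2\bigr)^2}
\]
at $0$, which is the claimed formula; strict negativity is then immediate from $\kappa_t(0)\kappa_j(0)\neq0$.

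The step I expect to be the main obstacle is the vanishing $N^{C_j}(0)=0$, i.e.\ the cancellation $\kappa_t^2+(\kappa_\nu-\kappa_{j+1})(\kappa_\nu-\kappa_j)=0$ at $0$; it has to be teased out of the identities $\kappa_\nu+r_b/3=\kappa_1+\kappa_2$ and $-\kappa_t^2+r_b\kappa_\nu/3=\kappa_1\kappa_2$ (both already used in the proof of Theorem \ref{thm:ccr}) together with the fact that $(C_j)_v$ is $\nu$-directed at $0$. This cancellation is what makes the final expression as clean as stated; the remaining work is the routine bookkeeping of derivatives in the adapted frame, entirely parallel to the computations in Proposition \ref{prop:frontalness} and Theorem \ref{thm:ccr}.
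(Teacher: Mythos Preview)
Your proof is correct and follows the same overall strategy as the paper---direct computation of $K^{C_j}(0)$ in an adapted coordinate system. The one noteworthy difference is how you obtain $N^{C_j}(0)=0$ and $M^{C_j}(0)$. You compute the second fundamental form as $\inner{(C_j)_{vv}}{\e_j}$ and $\inner{(C_j)_{uv}}{\e_j}$, which forces you to work out $(C_j)_{vv}(0)$ and then invoke the algebraic identity $\kappa_t^2+(\kappa_\nu-\kappa_1)(\kappa_\nu-\kappa_2)=0$ (equivalently $\kappa_\nu^2-2H\kappa_\nu+K+\kappa_t^2=0$) at $0$. The paper instead uses the dual expressions $N^{C_j}=-\inner{(C_j)_v}{(\e_j)_v}$ and $M^{C_j}=-\inner{(C_j)_v}{(\e_j)_u}$: since $(C_j)_v(0)=(\rho_j)_v\nu$ and the already-established formula \eqref{eq:ev} shows $(\e_j)_v$ lies in the span of $\hat\gamma'$ and $\hat h$ along the $u$-axis, the vanishing $N^{C_j}(0)=0$ is immediate without any further identity; $M^{C_j}(0)$ is then obtained by differentiating $\bx_j$ in $u$. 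Your route is a bit more self-contained, while the paper's is shorter because it recycles \eqref{eq:ev} from the proof of Proposition \ref{prop:frontalness}.
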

\begin{proof}
	Let us take an adapted coordinate system $(u,v)$ on $(\R^2,0)$. 
	Then we consider differentials of caustics $C_j$ ($j=1,2$). 
	By direct calculations, we have 
	\[(C_j)_u=(1-\rho_j\kappa_\nu)f_u-\rho_j\kappa_th+(\rho_j)_u\nu,\quad 
	(C_j)_v=(\rho_j)_v\nu\]
	at $0$. 
	We note that $(\rho_j)_v=-(\kappa_j)_v/\kappa_j^2\neq0$ at $0$ (cf. \eqref{eq:diff-kj}). 
	Thus the coefficients of the first fundamental form of $C_j$ are 
	\begin{equation*}\label{eq:fundamentalCj}
		E^{C_j}=\rho_j^2((\kappa_\nu-\kappa_j)^2+\kappa_t^2)+((\rho_j)_u)^2,\quad 
		F^{C_j}=(\rho_j)_u(\rho_j)_v,\quad 
		G^{C_j}=((\rho_j)_v)^2
	\end{equation*}
	at $0$. 
	Hence we get 
	\begin{equation}\label{eq:efgCj}
		E^{C_j}G^{C_j}-(F^{C_j})^2=((\rho_j)_v)^2\rho_j^2((\kappa_\nu-\kappa_j)^2+\kappa_t^2)
	\end{equation}
	at $0$. 
	We next investigate the coefficients of the second fundamental form of $C_j$. 
	By \eqref{eq:ev} and the above calculation, we have $\inner{(C_j)_v}{(\e_j)_v}=0$ at $0$. 
	Thus the quantity $N^{C_j}=-\inner{(C_j)_v}{(\e_j)_v}$ vanishes at $0$. 
	This implies that the Gaussian curvatures $K^{C_1}$ and $K^{C_2}$ are non-positive at $0$. 
	To obtain the explicit representation for $K^{C_j}$ ($j=1,2$), 
	we consider the quantity $M^{C_j}$ given by $M^{C_j}=-\inner{(C_j)_v}{(\e_j)_u}$. 
	Since $\e_j=\bx_j/|\bx_j|$, we have 
	$(\e_j)_u={(\bx_j)_u}{|\bx_j|^{-1}}+\bx_j(|\bx_j|^{-1})_u$. 
	Since $\bx_j\perp (C_j)_v$ hold, 
	it follows $\inner{(C_j)_v}{(\e_j)_u}=\inner{(C_j)_v}{(\bx_j)_u}|\bx_j|^{-1}$. 
	Thus we first consider $(\bx_j)_u$. 
	The map $\bx_j$ can be written as 
	\[\bx_j=-\kappa_tf_u+(\kappa_\nu-\kappa_j)h\]
	along the $u$-axis. 
	Therefore one can see that 
	\[(\bx_j)_u=-\kappa_t'f_u-\kappa_tf_{uu}+(\kappa_\nu'-(\kappa_j)_u)h+(\kappa_\nu-\kappa_j)h_u\]
	holds along the $u$-axis.  
	By \eqref{eq:frenet}, we may write $f_{uu}=\kappa_sh+\kappa_\nu\nu$ and $h_u=-\kappa_sf_u+\kappa_t\nu$ 
	along the $u$-axis. 
	Hence $(\bx_j)_u$ can be expressed as 
	\[(\bx_j)_u=-(\kappa_t'+\kappa_s(\kappa_\nu-\kappa_j))f_u
	+(\kappa_\nu'-(\kappa_j)_u-\kappa_s\kappa_t)h-\kappa_t\kappa_j\nu\]
	along the $u$-axis, in particular at $0$. 
	Thus it holds that 
	
	\begin{equation}\label{eq:MCj}
		M^{C_j}=-\inner{(C_j)_v}{(\e_j)_u}=\frac{(\rho_j)_v\kappa_t\kappa_j}{\sqrt{\kappa_t^2+(\kappa_\nu-\kappa_j)^2}}(\neq0)
	\end{equation}
	at $0$.  
	Thus the Gaussian curvature $K^{C_j}$ of $C_j$ is calculated as 
	\[
	\begin{aligned}
	K^{C_j}&=-\frac{(M^{C_j})^2}{E^{C_j}G^{C_j}-(F^{C_j})^2}=
	-\frac{\kappa_t^2\kappa_j^2}{\rho_j^2({\kappa_t^2+(\kappa_\nu-\kappa_j)^2})^2}\\
	&=-\frac{\kappa_t^2\kappa_j^4}{({\kappa_t^2+(\kappa_\nu-\kappa_j)^2})^2}
	\end{aligned}
\]
	at $0$ by \eqref{eq:efgCj} and \eqref{eq:MCj}. 
	This is the desired one.
	In particular, $K^{C_j}$ is strictly negative at $0$ by $\kappa_t\neq0$. 
\end{proof}
This theorem implies that a $5/2$-cuspidal edge corresponds to a {\it hyperbolic point} 
(cf. \cite[Page 12]{ifrt}) of the focal surfaces $C_j$ 
when $\kappa_t$ of the initial frontal does not vanish at that point. 

For the mean curvature,	we have the following. 
\begin{prop}\label{prop:Cj-mean}
	The mean curvature $H^{C_j}$ $(j=1,2)$ of $C_j$ for a frontal $f$ with a $5/2$-cuspidal edge $0$ 
	can be represented as 
	\[
	H^{C_j}=-\frac{\kappa_j(\kappa_s(\kappa_t^2+(\kappa_\nu-\kappa_j)^2)+\kappa_t'(\kappa_\nu-\kappa_j)-\kappa_t\kappa_\nu')}
	{2({\kappa_t^2+(\kappa_\nu-\kappa_j)^2})^{3/2}}
	\]
	at $0$. 
\end{prop}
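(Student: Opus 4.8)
The plan is to compute the second fundamental form of $C_j$ at $0$ in an adapted coordinate system and plug into the standard mean curvature formula $H^{C_j}=(G^{C_j}L^{C_j}-2F^{C_j}M^{C_j}+E^{C_j}N^{C_j})/(2(E^{C_j}G^{C_j}-(F^{C_j})^2))$. Almost all ingredients are already in place from the proof of Theorem \ref{thm:curvature}: in an adapted coordinate system $(u,v)$ one has $(C_j)_u=-\rho_j(\kappa_\nu-\kappa_j)f_u-\rho_j\kappa_t h+(\rho_j)_u\nu$ and $(C_j)_v=(\rho_j)_v\nu$ at $0$, the quantities $E^{C_j}=\rho_j^2(\kappa_t^2+(\kappa_\nu-\kappa_j)^2)+((\rho_j)_u)^2$, $F^{C_j}=(\rho_j)_u(\rho_j)_v$, $G^{C_j}=((\rho_j)_v)^2$ with $E^{C_j}G^{C_j}-(F^{C_j})^2=((\rho_j)_v)^2\rho_j^2\Delta_j^2$ (writing $\Delta_j=\sqrt{\kappa_t^2+(\kappa_\nu-\kappa_j)^2}$), and the second-order coefficients $N^{C_j}=0$ and $M^{C_j}=(\rho_j)_v\kappa_t\kappa_j/\Delta_j$ at $0$ from \eqref{eq:MCj}. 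Since $N^{C_j}(0)=0$, the formula collapses to $H^{C_j}=(G^{C_j}L^{C_j}-2F^{C_j}M^{C_j})/(2(E^{C_j}G^{C_j}-(F^{C_j})^2))$, so the only new computation needed is $L^{C_j}$ at $0$.

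To obtain $L^{C_j}$, I would differentiate $\inner{(C_j)_u}{\e_j}\equiv0$ to get $L^{C_j}=-\inner{(C_j)_u}{(\e_j)_u}$, and then use $\bx_j\perp(C_j)_u$ together with $\e_j=\bx_j/|\bx_j|$ to reduce this to $L^{C_j}=-\inner{(C_j)_u}{(\bx_j)_u}/|\bx_j|$ at $0$, where $|\bx_j|=\Delta_j$. The derivative $(\bx_j)_u$ at $0$ has already been computed in the proof of Theorem \ref{thm:curvature} as $(\bx_j)_u=-(\kappa_t'+\kappa_s(\kappa_\nu-\kappa_j))f_u+(\kappa_\nu'-(\kappa_j)_u-\kappa_s\kappa_t)h-\kappa_t\kappa_j\nu$; pairing it with $(C_j)_u$ and using that $\{f_u,h,\nu\}$ is orthonormal along the singular curve yields $\inner{(C_j)_u}{(\bx_j)_u}=\rho_j\big(\kappa_s\Delta_j^2+\kappa_t'(\kappa_\nu-\kappa_j)-\kappa_t\kappa_\nu'\big)+2\rho_j\kappa_t(\kappa_j)_u$ at $0$, hence $L^{C_j}=-\Delta_j^{-1}\big(\rho_j(\kappa_s\Delta_j^2+\kappa_t'(\kappa_\nu-\kappa_j)-\kappa_t\kappa_\nu')+2\rho_j\kappa_t(\kappa_j)_u\big)$.

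Finally, substituting $G^{C_j}$, $F^{C_j}$, $M^{C_j}$ and this $L^{C_j}$ into the reduced formula, the numerator $G^{C_j}L^{C_j}-2F^{C_j}M^{C_j}$ acquires a term $-2\rho_j\kappa_t(\kappa_j)_u((\rho_j)_v)^2/\Delta_j$ from $L^{C_j}$ and a term $-2(\rho_j)_u\kappa_t\kappa_j((\rho_j)_v)^2/\Delta_j$ from $M^{C_j}$; since $(\rho_j)_u=-\rho_j^2(\kappa_j)_u$, so that $(\rho_j)_u\kappa_j=-\rho_j(\kappa_j)_u$, these two terms cancel exactly, leaving $G^{C_j}L^{C_j}-2F^{C_j}M^{C_j}=-((\rho_j)_v)^2\rho_j\Delta_j^{-1}(\kappa_s\Delta_j^2+\kappa_t'(\kappa_\nu-\kappa_j)-\kappa_t\kappa_\nu')$. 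Dividing by $2(E^{C_j}G^{C_j}-(F^{C_j})^2)=2((\rho_j)_v)^2\rho_j^2\Delta_j^2$ and using $\rho_j^{-1}=\kappa_j$ produces exactly the asserted expression, and the case $j=2$ is identical. The only step demanding genuine care is tracking this cancellation of the $(\kappa_j)_u$-terms, which is precisely what makes the final formula depend only on the geometric invariants $\kappa_s,\kappa_\nu,\kappa_t$ (and their derivatives) along the singular curve; everything else is straightforward substitution into formulas already established in the proof of Theorem \ref{thm:curvature}.
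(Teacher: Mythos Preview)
Your proof is correct and follows essentially the same route as the paper: compute $L^{C_j}=-\inner{(C_j)_u}{(\bx_j)_u}/|\bx_j|$ using the expression for $(\bx_j)_u$ from the proof of Theorem~\ref{thm:curvature}, then substitute into the mean curvature formula with $N^{C_j}=0$, and use $(\rho_j)_u=-\rho_j^2(\kappa_j)_u$ to cancel the $(\kappa_j)_u$-terms in $G^{C_j}L^{C_j}-2F^{C_j}M^{C_j}$. The paper performs exactly this computation, so there is nothing substantively different to compare.
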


\begin{proof}
	We take an adapted coordinate system $(u,v)$. 
	By the proof of Theorem \ref{thm:curvature}, we have 
	\[
	\begin{aligned}
		&L^{C_j}|\bx_j|=-\inner{(C_j)_u}{(\bx_j)_u}\\
		&=-\rho_j(\kappa_s(\kappa_t^2+(\kappa_\nu-\kappa_j)^2)
		{+}\kappa_t'(\kappa_\nu-\kappa_j)-\kappa_t(\kappa_\nu'-(\kappa_j)_u))
		+\kappa_t\kappa_j(\rho_j)_u
	\end{aligned}
\]
	at $0$. 
	Since $N^{C_j}=0$ at $0$ and relations $(\rho_j)_u=-(\kappa_j)_u\rho_j^2$ and $\kappa_j\rho_j=1$ hold, 
	we have 
	\[
	\begin{aligned}
		&E^{C_j}N^{C_j}-2F^{C_j}M^{C_j}+G^{C_j}L^{C_j}\\&=-2F^{C_j}M^{C_j}+G^{C_j}L^{C_j}\\
		&=-\frac{((\rho_j)_v)^2\rho_j\left(\kappa_s(\kappa_t^2
			+(\kappa_\nu-\kappa_j)^2){+}\kappa_t'(\kappa_\nu-\kappa_j)-\kappa_t\kappa_\nu'\right)}
		{\sqrt{\kappa_t^2+(\kappa_\nu-\kappa_j)^2}}
	\end{aligned}
\]
	at $0$. 
	By \eqref{eq:efgCj} and the above expression, we have the assertion. 
\end{proof}

\begin{ex}
		Let $f\colon(\R^2,0)\to(\R^3,0)$ be a map given by 
		\[f(u,v)=\left(u,u^2+\frac{v^2}{2},uv^2+\frac{v^5}{5}\right).\]
		Then one can see that $S(f)=\{v=0\}$ and $f$ has a $5/2$-cuspidal edge at $0$. 
		We can take a unit normal vector $\nu$ to $f$ as 
		\[\nu(u,v)=\frac{(2u(2u+v^3)-v^2,-2u-v^3,1)}{\sqrt{1+(2u+v^3)^2+(v^2-2u(2u+v^3))^2}}.\]
		We note that $\kappa_s=2$, $\kappa_\nu=0$, $\kappa_t=2\neq0$, $r_b=0$ and $r_c=72$ hold at $0$. 
		Thus principal curvatures $\kappa_1,\kappa_2$ take different values. 
		In particular, $\kappa_1=2$ and $\kappa_2=-2$ hold at $0$. 
		Thus the Gaussian curvature $K$ and the mean curvature $H$ of $f$ are 
		$K=-4<0$ and $H=0$ at $0$, respectively. 
		We consider the Gaussian curvatures and mean curvatures of $C_1$ and $C_2$. 
		By direct calculations, we have 
		\[K^{C_1}=K^{C_2}=-1<0\]
		at $0$. 
		On the other hand, 
		\[-\frac{\kappa_t^2\kappa_j^4}{({\kappa_t^2+(\kappa_\nu-\kappa_j)^2})^2}
		=-1\]
		holds at $0$ for $j=1,2$. 
		This verifies Theorem \ref{thm:curvature}. 
		Moreover, we have 
		\[H^{C_1}=-\frac{3}{2\sqrt{2}},\quad H^{C_2}=\frac{3}{2\sqrt{2}}\]
		at $0$ by direct calculations. 
		On the other hand, since $\kappa_t'=0$ and $\kappa_\nu'=-4$ hold at $0$, it follows that 
		\[
		\begin{aligned}
		{-}\frac{\kappa_1(\kappa_s(\kappa_t^2+(\kappa_\nu-\kappa_1)^2)+\kappa_t'(\kappa_\nu-\kappa_1)-\kappa_t\kappa_\nu')}
		{2({\kappa_t^2+(\kappa_\nu-\kappa_1)^2})^{3/2}}&=
		-\frac{3}{2\sqrt{2}},\\
		{-}\frac{\kappa_2(\kappa_s(\kappa_t^2+(\kappa_\nu-\kappa_2)^2)+\kappa_t'(\kappa_\nu-\kappa_2)-\kappa_t\kappa_\nu')}
		{2({\kappa_t^2+(\kappa_\nu-\kappa_2)^2})^{3/2}}&=
		\frac{3}{2\sqrt{2}}
		\end{aligned}
		\]
		hold at $0$. 
		This verifies that Proposition \ref{prop:Cj-mean} holds.
\end{ex}
%

\begin{ack}
The authors would like to thank Professors Luciana F. Martins and Kentaro Saji for fruitful discussions and comments. 
The second author thanks Professor Shoichi Fujimori for valuable comments. 
\end{ack}

\end{document}